\newcommand{\Rd}{{\mathbb R}^d}
\newcommand{\R}{\mathbb{R}}
\newcommand{\C}{\mathbb{C}}
\newcommand{\Z}{\mathbb{Z}}
\newcommand{\N}{\mathbb{N}}
\newcommand{\CC}{\mathbb{C}}
\newcommand{\TT}{\mathbb{T}}
\newcommand{\ZZ}{\mathbb{Z}}
\newcommand{\NN}{\mathbb{N}}
\newcommand{\RR}{\mathbb{R}}
\newcommand{\FS}{\mathcal{F}^\star}
\newcommand{\BBB}{\mathcal{B}}
\newcommand{\supp}{\mathop{\rm supp}\nolimits}
\newcommand{\Ker}{\mathop{\rm Ker}\nolimits}
\DeclareMathOperator{\intt}{int}
\def\eop{\hfill$\Box$\par\bigskip}
\newcommand{\be}{\begin{equation}}
\newcommand{\ee}{\end{equation}}
\newtheorem{theorem}{Theorem}
\newtheorem{proposition}{Proposition}
\newtheorem{definition}{Definition}
\newtheorem{lemma}{Lemma}
\newtheorem{corollary}{Corollary}
\newtheorem{remark}{Remark}
\newtheorem{example}{Example}
\numberwithin{remark}{section}
\numberwithin{theorem}{section}
\numberwithin{proposition}{section}
\numberwithin{lemma}{section}
\numberwithin{corollary}{section}
\numberwithin{equation}{section}
\numberwithin{definition}{section}
\numberwithin{condition}{section}
\numberwithin{question}{section}
\numberwithin{conjecture}{section}
\numberwithin{problem}{section}
\numberwithin{example}{section}
\numberwithin{equation}{section}
\newcommand{\Tu}{{\mathcal T}}
\newcommand{\FF}{{\mathcal F}}
\newcommand{\DD}{{\mathcal D}}
\newcommand{\CCC}{{\mathcal C}}
\newcommand{\PP}{{\mathcal P}}
\newcommand{\GG}{{\mathcal G}}
\newcommand{\de}{\delta}
\newcommand{\ve}{\varepsilon}
\newcommand{\La}{\Lambda}
\newcommand{\al}{\alpha}
\newcommand{\De}{\Delta}
\newcommand{\vf}{\varphi}
\newcommand{\Omp}{\Omega_{+}}
\newcommand{\B}{{\mathcal{B}}_0}
\newcommand\FV{\mathcal{F}_{1}}
\newcommand\FC{\mathcal{F}_{c}}
\newcommand\CV{\mathcal{C}_1}
\newcommand\CCCC{\mathcal{C}_{c}}
\newcommand{\ft}[1]{\widehat{#1}}
\newcommand\EK{\mathcal{E}_{\kappa}}
\newcommand\DE{\mathcal{D}^{\mathcal{E}}}
\newcommand\DEK{\mathcal{D}^{{\mathcal E}_\kappa}}
\newcommand\CEK{\mathcal{C}^{{\mathcal E}_\kappa}}
\newcounter{rem}
\begin{document}

\title{Delsarte's Extremal Problem and Packing on \linebreak Locally Compact Abelian Groups}

\date{}

\author{Elena E. Berdysheva\thanks{The author would like to thank the Isaac Newton Institute for Mathematical Sciences, Cambridge, for support and hospitality during the programme Approximation, Sampling and Compression in Data Science where some work on this paper was undertaken. This work was supported by EPSRC grant no EP/R014604/1.  A part of the work was done when the author was visiting Alfr\'ed R\'enyi Institute of Mathematics of the Hungarian Academy of Sciences.  } \, and Szil\'ard Gy. R\'ev\'esz\thanks{Supported in part by Hungarian National Research, Development and Innovation Fund project \# K-119528 and DAAD-TEMPUS Cooperation Project ``Harmonic Analysis and Extremal Problems'' \# 308015.}}

\maketitle

\vskip1cm


\begin{abstract}

Let $G$ be a locally compact Abelian group, and let $\Omega_+$, $\Omega_-$ be two open sets in $G$. We investigate the constant
$
{\cal C}(\Omega_+,\Omega_-) = \sup{ \left\{ \int_G f: f \in {\cal F}(\Omega_+,\Omega_-) \right\} },
$
where ${\cal F}(\Omega_+,\Omega_-)$ is the class of positive definite functions $f$ on $G$ such that $f(0) = 1$, the positive part $f_+$ of $f$ is supported in $\Omega_+$, and its negative part $f_-$ is supported in $\Omega_-$. In the case when $\Omega_+ = \Omega_- =: \Omega$, the problem is exactly the so-called Tur\'an problem for the set $\Omega$. When $\Omega_- = G$, i.e., there is a restriction only on the set of positivity of $f$, we obtain the Delsarte problem. The Delsarte problem in $\Rd$ is the sharpest Fourier analytic tool to study packing density by translates of a given ``master copy'' set, which was studied first in connection with packing densities of Euclidean balls.

We give an upper estimate of the constant ${\cal C}(\Omega_+,\Omega_-)$  in the situation when the set $\Omega_+$ satisfies a certain packing type condition. This estimate is given in terms of the asymptotic uniform upper density of sets in locally compact Abelian groups.

{\bf MSC 2010 Subject Classification.}
Primary 43A35; 05B40 Secondary 42A82,  43A25, 42B10, 22B05; 11H31

\end{abstract}


\section{Introduction}\label{sec:Intro}

Let $G$ be a locally compact Abelian (LCA) group. We denote by ${\cal B}$ the class of Borel subsets of $G$, and by ${\cal B}_0$ the class of Borel subsets of $G$ whose closure is compact. We write $A_1 \Subset A_2$ if $A_1$ is a compact subset of $A_2$. For a set $A$, $\overline{A}$ denotes the closure of $A$ and $\intt{A}$ the interior of $A$, while $\chi_A$ stands for the characteristic function (indicator function) of $A$.

We denote by $m_G$ the Haar measure on $G$.  The convolution of two functions $f,g \in L^1(G)$ is defined by
$$
f * g := \int_G f(t) \, g(\cdot -t) \, dm_G(t).
$$
For a function $f : G \to \C$ we denote its converse function as $\widetilde{f}(x) := \overline{f(-x)}$. The support of a function $f$ is the closure of the set of all points where $f$ takes a non-zero value, i.e.,
$$
\supp{f} := \overline{ \{x : f(x) \ne 0 \}}.
$$

We will write $f \gg 0$ if $f$ is positive definite, i.e., if
\begin{equation}\label{posdefdef}
\sum_{n=1}^{N} \sum_{m=1}^N c_n \overline{c_m} f(x_n - x_m) \ge 0
\end{equation}
for all $N \in \N$, $x_1, \ldots, x_N \in G$ and $c_1,\ldots, c_N \in \C$.  For the basics on the harmonic analysis on LCA groups and for the facts about positive definite functions consult, e.g., the book of Rudin \cite{Rudin-book}.

For a real-valued function $f : G \to \R$, we use the standard notation
$$
f_+(x) := \max{\{ f(x), 0\}} \quad \text{and}  \quad f_-(x) := \max{\{ -f(x), 0\}};
$$
the functions $f_+$ and $f_-$ are the positive and the negative parts of $f$, respectively.
Let $\Omega_+$ and $\Omega_-$ be two open sets in $G$. We will consider real-valued positive definite functions $f$ on $G$ such that their positive and negative parts are supported in $\Omega_+$ and $\Omega_-$, respectively.  Depending on exact assumptions we put on the functions, we may consider different function classes. In this paper we mainly study the following function classes:
$$
\FV (\Omega_+, \Omega_-) :=
\left\{ f \in C(G)\cap L^1(G)  : f \gg 0, f(0) = 1, \supp{f_+} \subset \Omega_+, \supp{f_-} \subset \Omega_-  \right\},
$$
$$
\FC(\Omega_+, \Omega_-) :=
\left\{ f \in C(G) : f \gg 0, f(0) = 1 , \supp{f_+} \Subset \Omega_+, \supp{f_-} \Subset \Omega_- \right\}.
$$
So in the class $\FV(\Omega_+, \Omega_-)$, the sets $\supp{f_+}$ and $\supp{f_-}$ are closed sets that are not necessarily compact. Obviously,
\be \label{inclusion-c}
\FC(\Omega_+, \Omega_-)  \subset \FV(\Omega_+, \Omega_-).
\ee

The extremal problem we consider is to maximize the value of the integral of $f$ over
the function classes defined above. That is
we define the extremal constants
\begin{eqnarray} \label{C-problem}
\CV(\Omega_+, \Omega_-) & := &
\sup{ \left\{ \int_G f : f \in \FV (\Omega_+, \Omega_-) \right\} },
\nonumber \\
{\CCCC}(\Omega_+, \Omega_-) & := &
\sup{ \left\{ \int_G f : f \in \FC(\Omega_+, \Omega_-) \right\} }.
\end{eqnarray}
Note that for any meaningful interpretation of the extremal constants, the functions in our classes must be integrable, always. This explains the seemingly artificial restriction in the definition of $\FV$.

For an empty function class $\FF=\emptyset$ we interpret $\sup_{f\in \FF} \int_G f =0$. This is compatible with the easy fact that if $0\in \Omega_{+}$, then $\FC(\Omega_{+},\Omega_{-}) \supset \FC(\Omega_{+},\emptyset)\ne\emptyset$ and $\CCCC(\Omega_+, \Omega_-)>0$ (indeed, consider a properly normalized convolution $\chi_V * \widetilde{\chi_V}$, where $V \Subset \Omega_+$ with $V - V \subset \Omega_+$). On the other hand, for $0 \not\in \Omega_{+}$ we necessarily have $f(0)\le 0$, whence $f\gg 0$ implies $f\equiv 0$,  and thus $\FC(\Omega_+, \Omega_-)=\emptyset$.

As our first result we will show that the values defined above do not depend on a particular choice of the function class, and thus we can, and will denote the common value by ${\cal C}(\Omega_+, \Omega_-) $, or by ${\cal C}_G(\Omega_+, \Omega_-)$ if we want to emphasize the group we consider. This statement also means that we could study further function classes lying between $\FV(\Omega_+, \Omega_-)$ and $\FC(\Omega_+, \Omega_-)$; this would not change the value of the extremal problem. In fact, one can also extend (formally, as these would not actually increase the family of functions) the considered function class---we will continue to comment on it later in Section \ref{sec:EquivFormulations}.

For particular choices of $\Omega_-$, the problem  ${\cal C}(\Omega_+, \Omega_-) $ coincides with known extremal problems for positive definite functions. In the case when $\Omega_- = \Omega_+ =: \Omega$, it  is exactly the
so-called Tur\'an extremal problem\footnote{The problem became formulated and widely investigated after Tur\'an exposed to Stechkin \cite{stechkin:periodic} the corresponding question for intervals on the torus $\TT$. Although in the respective literature this extremal problem became widely known under Tur\'an's name, earlier, closely related results of Siegel \cite{Siegel}, Boas and Kac \cite{BK} and even Carath\'edory \cite{Cara} and Fej\'er \cite{Fej} surfaced in the paper \cite{Revesz-2011}.  This is why we term the extremal problem as the ``so-called''  Tur\'an problem. For a more detailed survey of the history of the problem and its background see \cite{Revesz-2011}. Thus the problem has been rediscovered several times on different occasions, including a recent paper~\cite{BianchiKelly}.}
\be \label{Turan-problem}
{\cal T}(\Omega)  := {\cal T}_c(\Omega) :=  \CCCC(\Omega,\Omega)
= \sup{ \left\{ \int_G f : f \in C(G), f \gg 0, f(0) = 1,  \supp{f} \Subset \Omega \right\} }.
\ee

Usually, in such context, one considers complex-valued functions, i.e., $f : G \to \C$. However, if $f$ is positive definite, then also $\Re{f}$ is positive definite, belongs to the same function class, $f(0) = \Re{f(0)}$, and $\int_G f = \int_G \Re{f}$. Thus, it is enough to consider only real-valued functions in problem (\ref{Turan-problem}). Also in problem (\ref{C-problem}) we could consider complex-valued functions $f : G \to \C$ with $(\Re{f})_+$,  $(\Re{f})_-$ supported in $\Omega_+$, $\Omega_-$, respectively. Since this does not change the value of the extremal problem, we restrict our consideration to the case of real-valued functions.

Since $f(x) = \overline{f(-x)}$ for positive definite functions $f$, the sets $\supp{f_\pm}$ are $0$-symmetric.  Thus, the condition $\supp{f_\pm} \subset \Omega_\pm$ implies also $\supp{f_\pm} \subset \Omega_\pm \cap (-\Omega_\pm)$, where the latter are already symmetric sets. Therefore we can assume without loss of generality that the sets $\Omega_\pm$ are symmetric.

If $\Omega_- = G$, we recover the Delsarte extremal problem
\begin{eqnarray*}
{\cal D}(\Omega_+)  := {\cal D}_c(\Omega_+)  := \CCCC(\Omega_+, G)
= \sup{ \left\{ \int_G f \right.} & : & f \in C(G), f \gg 0, f(0) = 1,\\ && {\left.\supp{f_+} \Subset \Omega_+, \supp{f_-} \Subset G \right\} },
\end{eqnarray*}
which, under the forthcoming Theorem \ref{equivalence-theorem}, is equal to
\begin{eqnarray*}
{\cal D}_1 (\Omega_+) & := & \CV (\Omega_+, G) \\
& = & \sup{ \left\{ \int_G f : f \in C(G) \cap L^1(G), f \gg 0, f(0) = 1, \supp{f_+} \subset \Omega_+  \right\} }.
\end{eqnarray*}
The term ``Delsarte's problem'' refers back to a classical paper of Delsarte~\cite{Del} where Delsarte used a completely analogously formulated extremal problem in case of discrete sets (codes) in terms of coefficients of Gegenbauer expansions (in place of Fourier transforms), see also~\cite[Theorem 4.3]{Del3}. Following Delsarte, these problems were used to obtain estimates for densities of sphere packings, kissing numbers, cardinalities of spherical codes, etc.; see, e.g. \cite{AreBab97, AreBab00, Boy, cohn:packings, gorbachev:sphere, gorbachev:ball, KabLev, Kuklin, Lev, Musin, Viaz}.

In case of sphere packings in $\R^d$, exactly the above Delsarte extremal problem of finding ${\cal D}(B)={\cal C}(B,\R^d)$ occurs \cite{gorbachev:sphere, cohn:packings, Viaz}, where $B:=\{ {\bf x} \in \RR^d~:~|{\bf x}|  <  1\}$  denotes the unit ball of $\R^d$,  apart from choosing appropriate function classes varying from author to author but essentially equivalent to the classes $\FC(B, \R^d)$ and $\FV(B, \R^d)$. As mentioned above, we will show that the choice of the particular function class---at least as long as the ball is considered---is immaterial.

Obviously, if $\Omega_+ \subset \widetilde\Omega_+$ and $\Omega_- \subset \widetilde\Omega_-$, then ${\cal C}(\Omega_+, \Omega_-)  \le {\cal C}(\widetilde\Omega_+, \widetilde\Omega_-)$. In particular, ${\cal T}(\Omega) \le {\cal D}(\Omega)$. It can happen that  ${\cal T}(\Omega) = {\cal D}(\Omega)$. This is the case when $G = {\mathbb T}$ is the one-dimensional torus and $\Omega = [-h,h]$. Both problems ${\cal T}([-h,h])$ and ${\cal D}([-h,h])$ on the torus were solved in a series of papers \cite{IvanovRudomazina:rational_h, IvanovGorbachevRudomazina:IMM, Ivanov:all_h}, see also references therein.  The equality ${\cal T}(\Omega) = {\cal D}(\Omega)$ also holds in the general setting when $\Omega_+$ is a difference set of a strict tile (see Proposition~\ref{tile-theorem}). 

However, the inequality  ${\cal T}(\Omega) \le {\cal D}(\Omega)$  can be strict. This is, e.g., the case for $\Omega$ being the Euclidean ball $B$ in $\R^d$.
Indeed,  it is known for long that ${\Tu }(B)= 2^{-d} |B| =2^{-d} \omega_d $ with $\omega_d=|B|$ denoting the volume of the unit ball in $\R^d$, see \cite{Siegel, gorbachev:ball, kolountzakis:turan, BianchiKelly}; as metioned in~\cite{BianchiKelly}, the Tur\'an problem for the ball was also implicitly solved in~\cite{HoltVaaler}. On the other hand, for $d=8$, for example, ${\cal D}(B) = 2^{-4} = 0.0625$ as has been shown by Viazovska~\cite{Viaz}, which is considerably larger than ${\cal T}(B) = 0.015854...$. This is not just a numerical difference but a very crucial one because the Delsarte bound unlike the Tur\'an bound turned to be exact in this case regarding the density of sphere packing.

The first attempt to use such Fourier analytic extremal problems to establish bounds for packing densities was worked out by Siegel~\cite{Siegel} using ${\cal T}(\Omega)$ but later it turned out that the Delsarte extremal problem can give sharper bounds in most of the situations.

Once packing density is mentioned, it is a point that construction of the notion of appropriate densities is not always trivial. To obtain sharpest bounds, one is looking for the largest reasonable variants of densities, which are well-known (and are called asymptotic uniform upper densities or Banach densities) in classical cases like e.g. $\RR^d$ or $\ZZ^d$, but were not constructed to general LCA groups until recently. We will explain the notion of these densities in Section~6, see in particular Definition \ref{def-auud}. For a discrete set $\Lambda \subset G$ we will denote its asymptotic uniform upper density as $\overline{D}^{\#}(\Lambda)$, see \eqref{dens-discr-def}.

The main aim of the paper is to study the behavior of the constant ${\cal C}(\Omega_+,\Omega_-)$ in the case when the positivity set $\Omega_+$ possesses some structural properties like packing and tiling. While the connection between the packing densities and the Tur\'an problem were explored in \cite{KR-2006, Revesz-2011}, our point here is to further the analysis  to the connection of packing type density questions and the Delsarte problem. Furthermore, our analysis reveals that Delsarte type bounds can be applied under more general hypothesis than mere packing.
Following  \cite{KR-2006, Revesz-2011}, we say  that a set $W \in {\cal B}_0$ satisfies a \emph{generalized strict packing type condition} with the translation set  $\Lambda \subset G$ if
\begin{equation}\label{eq:generalpackingtype-strict}
(\Lambda - \Lambda) \cap W \subseteq \{0\},
\end{equation}
see Definition~\ref{def:genpack}. In particular, we say that $H\in {\cal B}_0$ packs $G$ with the translation set $\Lambda \subset G$ \emph{in the strict sense} if
$$
\sum_{\lambda \in \Lambda} \chi_H(x - \lambda) \le 1 \qquad \text{for all} \ x \in G,
$$
which is equivalent to
\begin{equation}\label{stricpacking}
(\Lambda - \Lambda) \cap (H - H) = \{0\}.
\end{equation}
This reveals how \eqref{eq:generalpackingtype-strict} generalizes \eqref{stricpacking} when $W$ is not necessarily a difference set.

The main result of the paper is the following statement.

\begin{theorem} \label{main-theorem}
Let $G$ be a LCA group. Suppose $\Omega_+ \subset G$ is an open $0$-symmetric neighborhood of $0$ satisfying the strict packing type condition with a translation set $\Lambda \subset G$:
\begin{equation}\label{eq:generalpackingtype-strict-Omega}
\Omega_+ \cap (\Lambda - \Lambda) = \{0\}.
\end{equation}
Then
\begin{equation} \label{main-theorem-estimate}
{\cal D}(\Omega_+) \le \frac{1}{\overline{D}^{\#}(\Lambda)}.
\end{equation}

\end{theorem}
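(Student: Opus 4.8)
The plan is to combine the Cohn--Elkies / linear-programming argument for positive definite functions with the autocorrelation of the translation set $\Lambda$, whose needed properties should come from the density theory of Section~6. As a preliminary reduction, by Theorem~\ref{equivalence-theorem} it is enough to bound $\int_G f$ for $f\in\FF(\Omega_+,G)$; for such $f$ both $\supp f_+$ and $\supp f_-$ are compact, so $f\in C_c(G)$, $f\gg 0$, $f(0)=1$. We may assume $I:=\int_G f>0$ and $\overline{D}^{\#}(\Lambda)>0$, as \eqref{main-theorem-estimate} is otherwise trivial. Since the continuous integrable $f$ is positive definite, Bochner's theorem and the inversion theorem give $\widehat f\ge 0$, $\widehat f\in L^1(\widehat G)$, and (with Haar measures on $G$ and $\widehat G$ matched for inversion) $\int_{\widehat G}\widehat f\,dm_{\widehat G}=f(0)=1$ and $\widehat f(1_{\widehat G})=\int_G f=I$. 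Note also that $\Lambda$ is uniformly discrete, which is forced by \eqref{eq:generalpackingtype-strict-Omega}.

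The local step is the following: for any finite $F\subset\Lambda$, with $\mu_F:=\sum_{\lambda\in F}\delta_\lambda$,
\[
0\ \le\ \sum_{\lambda,\lambda'\in F}f(\lambda-\lambda')\ =\ \int_{\widehat G}\widehat f(\gamma)\,\bigl|\widehat{\mu_F}(\gamma)\bigr|^{2}\,dm_{\widehat G}(\gamma)\ \le\ \#F .
\]
The equality is Parseval, the first inequality uses $\widehat f\ge 0$, and the last one is exactly where the packing hypothesis \eqref{eq:generalpackingtype-strict-Omega} enters: if $\lambda\neq\lambda'$ then $\lambda-\lambda'\in(\Lambda-\Lambda)\setminus\{0\}\subset G\setminus\Omega_+\subset G\setminus\supp f_+$, hence $f(\lambda-\lambda')\le 0$, while the $\#F$ diagonal terms contribute $\#F\cdot f(0)=\#F$.

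Next I would use Section~6 to pick a F\o lner-type averaging sequence $(K_N)$ realizing the asymptotic uniform upper density (Definition~\ref{def-auud}): with $n_N:=\#(\Lambda\cap K_N)$ one has $m_G(K_N)^{-1}n_N\to\overline{D}^{\#}(\Lambda)$. Put $\Lambda_N:=\Lambda\cap K_N$, $\mu_N:=\mu_{\Lambda_N}$, $\nu_N:=m_G(K_N)^{-1}\,\mu_N*\widetilde{\mu_N}$; these are positive definite and, by uniform discreteness of $\Lambda$, uniformly translation-bounded, so along a subsequence they converge vaguely to a positive definite translation-bounded measure $\gamma_\Lambda$ on $G$ --- the autocorrelation of $\Lambda$. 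Two properties of $\gamma_\Lambda$ are needed, both of which belong to Section~6: (i) because $(\Lambda-\Lambda)\cap\Omega_+=\{0\}$, the restriction of each $\nu_N$ to a fixed small symmetric neighbourhood of $0$ is $m_G(K_N)^{-1}n_N\,\delta_0$, whence $\gamma_\Lambda(\{0\})=\overline{D}^{\#}(\Lambda)$; and (ii) the Fourier transform $\widehat{\gamma_\Lambda}$ is a positive (translation-bounded) measure on $\widehat G$ with $\widehat{\gamma_\Lambda}\bigl(\{1_{\widehat G}\}\bigr)\ge\bigl(\overline{D}^{\#}(\Lambda)\bigr)^{2}$, i.e.\ the central Bragg peak of $\Lambda$ has intensity at least the square of the density. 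Granting these, apply the local step with $F=\Lambda_N$, divide by $m_G(K_N)$ and let $N\to\infty$ along the subsequence: the left side tends to $\int_G f\,d\gamma_\Lambda$ (vague convergence, $f\in C_c(G)$) and the right side to $\overline{D}^{\#}(\Lambda)$, so $\int_G f\,d\gamma_\Lambda\le\overline{D}^{\#}(\Lambda)$; while on the dual side, Parseval for positive definite measures and then keeping only the atom at $1_{\widehat G}$ (legitimate since $\widehat f\ge 0$ and $\widehat{\gamma_\Lambda}\ge 0$) give
\[
\int_G f\,d\gamma_\Lambda\ =\ \int_{\widehat G}\widehat f\,d\widehat{\gamma_\Lambda}\ \ge\ \widehat f(1_{\widehat G})\,\widehat{\gamma_\Lambda}(\{1_{\widehat G}\})\ \ge\ I\,\bigl(\overline{D}^{\#}(\Lambda)\bigr)^{2}.
\]
Combining, $I\,(\overline{D}^{\#}(\Lambda))^{2}\le\overline{D}^{\#}(\Lambda)$, hence $I\le 1/\overline{D}^{\#}(\Lambda)$; taking the supremum over $f\in\FF(\Omega_+,G)$ gives \eqref{main-theorem-estimate}.

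The hard part is property (ii): establishing rigorously, in an arbitrary LCA group, that the diffraction of $\Lambda$ carries a point mass of at least $(\overline{D}^{\#}(\Lambda))^{2}$ at the trivial character. This cannot follow from positive-definiteness of $\gamma_\Lambda$ alone --- $\gamma=\delta_0$ already violates it --- so it genuinely uses the definition and elementary properties of the asymptotic uniform upper density, which is presumably why Section~6 is developed. The remaining points (uniform translation-boundedness and vague precompactness of the $\nu_N$, persistence of positive-definiteness under vague limits, and the Parseval pairing $\int_G f\,d\gamma_\Lambda=\int_{\widehat G}\widehat f\,d\widehat{\gamma_\Lambda}$ for $f\in C_c(G)$ with $\widehat f\in L^1(\widehat G)$) are standard but should be checked carefully.
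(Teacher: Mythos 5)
Your overall ``linear--programming'' skeleton is sound, and your local step is correct: it is essentially the same observation the paper uses (in its notation, $\Phi(0)=\sum_{\lambda,\lambda'\in\Lambda'}f(\lambda-\lambda')\le \#\Lambda'$, forced by \eqref{eq:generalpackingtype-strict-Omega}). But the argument as written has a genuine gap exactly where you flag it, and that gap is not filled by anything in Section~\ref{sec:auud}. Your property (ii) --- that the diffraction $\widehat{\gamma_\Lambda}$ carries an atom of mass at least $\bigl(\overline{D}^{\#}(\Lambda)\bigr)^{2}$ at the trivial character --- is the entire content of the dual-side lower bound $\int_G f\,d\gamma_\Lambda\ge I\,\rho^2$, and it is asserted, not proved. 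Section~\ref{sec:auud} contains only the definition of the a.u.u.d.\ and its interaction with packing/covering/tiling; it contains no autocorrelation or diffraction theory, and Lemma~\ref{density-lemma} (the only quantitative tool available) merely produces, for one fixed $V\in\B$ and $\varepsilon>0$, a translate $z$ with $\#(\Lambda\cap(V+z))\ge(\rho-\varepsilon)m_G(V)$. There is a second, related problem: the a.u.u.d.\ of Definition~\ref{def-auud} is an $\inf$--$\sup$, not a limit, and a general LCA group need not be $\sigma$-compact or compactly generated, so a ``F\o lner-type averaging sequence $(K_N)$ realizing the density'' with $\#(\Lambda\cap K_N)/m_G(K_N)\to\overline{D}^{\#}(\Lambda)$ does not exist as stated; at best one gets wandering translated patches $\Lambda\cap(V_N+z_N)$ inside a compactly generated subgroup, and then the existence of the limit autocorrelation, its transformability, the Parseval pairing $\int_G f\,d\gamma_\Lambda=\int_{\widehat G}\widehat f\,d\widehat{\gamma_\Lambda}$, and above all the Bragg-peak bound would all have to be established in this generality. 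None of this is routine, and none of it is in the paper.

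For comparison, the paper's proof avoids diffraction entirely and this is precisely how it closes the hole you left open. It restricts attention to the compactly generated open subgroup $G_0=\langle K\rangle$ containing $\supp f_+$, uses Lemma~\ref{structure-lemma} to produce a lattice $L\cong\Z^d$ and a tile $E$ (so $V_N=E+L_N$ tiles $G_0$ strictly with $(2N+1)L$), and uses Lemma~\ref{density-lemma} to find a translate $V_N+z$ containing $M\ge(\rho-\varepsilon)m_{G_0}(V_N)$ points of $\Lambda$. It then forms the same finite periodization $\Phi(x)=\sum_{\lambda,\lambda'\in\Lambda'}f(x+\lambda-\lambda')$, and --- instead of your unproven estimate on $\widehat{\gamma_\Lambda}(\{1_{\widehat G}\})$ --- bounds $\int_{G_0}\Phi/\Phi(0)$ from above by $m_{G_0}(V_{N+s})$ via Proposition~\ref{tile-theorem}, the exactly computed Delsarte constant of the difference set of a strict lattice tile. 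Combining with $\int_{G_0}\Phi=M^2\int_G f$ and $\Phi(0)\le M$ and letting $N\to\infty$, $\varepsilon\to0$ gives \eqref{main-theorem-estimate}. So Proposition~\ref{tile-theorem} plays, in finite form, the role you wanted the central Bragg peak to play; if you wish to salvage your route you would need to prove your property (ii) (a Hof-type theorem) for a.u.u.d.\ on arbitrary LCA groups, which is a substantial piece of work in its own right rather than a citation to Section~\ref{sec:auud}.
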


\begin{corollary}
Let $G$ be a LCA group. Suppose $\Omega_+ \subset G$ is an open $0$-symmetric neighborhood of $0$ satisfying the strict packing type condition \eqref{eq:generalpackingtype-strict-Omega} with a translation set $\Lambda \subset G$. Let $\Omega_-$ be any open, $0$-symmetric set. Then
$$
{\cal C}(\Omega_+, \Omega_-) \le \frac{1}{\overline{D}^{\#}(\Lambda)}.
$$

\end{corollary}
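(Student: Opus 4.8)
The plan is to obtain the corollary as an immediate consequence of Theorem~\ref{main-theorem}, using only the monotonicity of ${\cal C}$ in its second argument. Since $\Omega_-$ is an arbitrary open set we trivially have $\Omega_- \subseteq G$, and of course $\Omega_+ \subseteq \Omega_+$; hence the monotonicity observation already recorded in the Introduction (if $\Omega_+ \subseteq \widetilde\Omega_+$ and $\Omega_- \subseteq \widetilde\Omega_-$ then ${\cal C}(\Omega_+,\Omega_-) \le {\cal C}(\widetilde\Omega_+,\widetilde\Omega_-)$) applied with $\widetilde\Omega_+ = \Omega_+$ and $\widetilde\Omega_- = G$ gives
\[
{\cal C}(\Omega_+,\Omega_-) \;\le\; {\cal C}(\Omega_+,G) \;=\; {\cal D}(\Omega_+).
\]
The hypotheses imposed on $\Omega_+$ in the corollary are exactly those of Theorem~\ref{main-theorem}, so that theorem yields ${\cal D}(\Omega_+) \le 1/\overline{D}^{\#}(\Lambda)$, and chaining the two inequalities completes the proof.

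A few routine points I would verify along the way. The symbol ${\cal C}(\Omega_+,\Omega_-)$ is legitimate as the common value of $\CV$ and ${\cal C}$ only in view of Theorem~\ref{equivalence-theorem}, but the monotonicity step is valid at the level of either function class, since enlarging $\Omega_-$ enlarges both $\FV(\Omega_+,\Omega_-)$ and $\FF(\Omega_+,\Omega_-)$ as families of admissible competitors; so no subtlety arises there. Likewise, the reduction to $0$-symmetric $\Omega_-$ discussed in the Introduction is harmless and, in any case, $\Omega_-$ is assumed $0$-symmetric in the statement. Finally, one checks that $G$ is an open subset of itself, so ${\cal D}(\Omega_+) = {\cal C}(\Omega_+,G)$ is covered by the same definitions.

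Since this is a one-line consequence of an already-proved theorem, there is essentially no obstacle: all the substance — the packing-type argument and the passage to the asymptotic uniform upper density $\overline{D}^{\#}(\Lambda)$ — lives in Theorem~\ref{main-theorem}. If one preferred an argument not routed through the Delsarte constant, one could instead note that for $f \in \FF(\Omega_+,\Omega_-)$ one has $\int_G f = \int_G f_+ - \int_G f_- \le \int_G f_+$ with $\supp f_+ \Subset \Omega_+$, and then repeat verbatim the portion of the proof of Theorem~\ref{main-theorem} that uses only $\supp f_+ \Subset \Omega_+$ together with the positive definiteness of $f$; but the monotonicity route is shorter and requires nothing beyond what has already been established.
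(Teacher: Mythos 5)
Your argument is correct and is exactly the paper's (implicit) reasoning: the corollary is stated without proof precisely because it follows from the monotonicity ${\cal C}(\Omega_+,\Omega_-)\le{\cal C}(\Omega_+,G)={\cal D}(\Omega_+)$ noted in the Introduction, combined with Theorem~\ref{main-theorem}. Nothing further is needed.
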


The paper is organized as follows. In Section~\ref{sec:EquivFormulations} we will show that the value of the extremal problem does not depend on the particular function class $\FC (\Omega_+, \Omega_-) $ or $\FV (\Omega_+, \Omega_-)$. Moreover, we can even (formally) extend the class $\FV (\Omega_+, \Omega_-)$ without changing the value. In Section~\ref{sec:account} we concentrate on the particular case $G = \R^d$, mainly considering $\Omega$ being the Euclidean ball $B$, and review the function classes used by other authors like Gorbachev, Cohn and Elkies, Viazovska, with the result that the extremal value in all these cases is the same as in our setting. The key result of this section is a statement of Gorbachev that ${\cal D}(B)$ coincides with the value $\DD^\GG_0(B) := \sup \left\{ \int_{\RR^d} f~:~ f \in \GG_0(B)\right\}$   of the Delsarte problem for the class of functions
$$
\GG_0(B):=\left\{ f \in C(\RR^d) \cap L^1(\RR^d) :  f \gg 0, f(0) = 1, \supp{f_+} \subset B, \supp \widehat{f} \Subset \RR^d \right\}.
$$
The proof of this fact was not recorded earlier and is given here in a friendly agreement with Dmitry Gorbachev.

In Section 4 we study the behavior of the extremal problem ${\cal C}(\Omega_+, \Omega_-)$ under homomorphisms. In Section 5 we discuss packing in the strict sense and the generalized strict packing type condition.
In a short Section 6 we explain the notion of the asymptotic uniform upper density on LCA groups. In the final Section~7 we prove Theorem~\ref{main-theorem}.

\bigskip

\noindent {\bf Acknowledgments.} 
We thank Dmitry Gorbachev for numerous conversations and suggestions and for providing us Proposition~\ref{prop:Gorbachev}, as well as its proof including the auxiliary statements and references leading to this result. We also thank Valerii Ivanov, Vilmos Komornik, Marcello Lucia and Gerg\H{o} Nemes for useful discussions and references. We thank the anonymous referee, too, for directing our attention to further relevant literature.


\section{Equivalence of the extremal problems in various function classes}\label{sec:EquivFormulations}

In this section we will prove that the value of the extremal problem in (\ref{C-problem}) does not depend on the particular choice of the function class as given in the above definitions. Although this may seem a mere technicality, it requires a proof anyway. Moreover, note that we will encounter variants (classes of functions with compactly supported Fourier transform in Section \ref{sec:account}), where this equivalence is only known in rather special cases. Therefore, one has to be careful with underestimating these ``mere technicalities''.

\begin{theorem}\label{equivalence-theorem}
If $\Omega_+$ and $\Omega_-$ are open, $0$-symmetric subsets of a LCA group $G$,
then
$$
\CV(\Omega_+, \Omega_-) = \CCCC(\Omega_+, \Omega_-).
$$

\end{theorem}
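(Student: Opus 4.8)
The plan is to show $\CV(\Omega_+,\Omega_-) = \CCC(\Omega_+,\Omega_-)$. One inequality is free: by the inclusion \eqref{inclusion-c}, every $f \in \FF(\Omega_+,\Omega_-)$ also lies in $\FV(\Omega_+,\Omega_-)$, so $\CCC(\Omega_+,\Omega_-) \le \CV(\Omega_+,\Omega_-)$. The substance is the reverse inequality $\CV(\Omega_+,\Omega_-) \le \CCC(\Omega_+,\Omega_-)$. Fix an arbitrary $f \in \FV(\Omega_+,\Omega_-)$ and $\ve>0$; I want to manufacture from $f$ a function $g \in \FF(\Omega_+,\Omega_-)$ — i.e.\ continuous, positive definite, with $g(0)=1$, and with $\supp g_+$ and $\supp g_-$ \emph{compact} subsets of $\Omega_+$ and $\Omega_-$ respectively — such that $\int_G g \ge \int_G f - \ve$. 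Since $f \in L^1(G)$, first pick $V \Subset \Omega_+$ so large that the truncation $f\chi_V$ captures almost all of the positive mass of $f$; the point is that the ``tail'' of $f$ outside a large compact set contributes little to $\int_G f$.

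The natural mechanism is to multiply $f$ by a positive definite "bump" that is $\equiv 1$ near $0$ and compactly supported in a small symmetric neighborhood $U$ of $0$. Concretely, choose a symmetric neighborhood $U \Subset G$ of $0$ small enough that $U + \supp(f\chi_V)_\pm \Subset \Omega_\pm$ when we localize; take $W \Subset G$ a symmetric neighborhood of $0$ with $W - W \subset U$, and set $\psi := \frac{1}{m_G(W)}\chi_W * \widetilde{\chi_W}$, which is continuous, positive definite, $\psi(0)=1$, and $\supp\psi \subset W-W \subset U$. Then $g_0 := f\cdot\psi$ is continuous and positive definite (product of positive definite functions), $g_0(0)=1$, and $\supp (g_0)_\pm \subset (\supp f_\pm)\cap U$ — but this last set need not be compact, only closed, so a further truncation of $f$ to the compact $V$ is needed before multiplying. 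So the real construction is $g := (f\chi_V)\cdot\psi$ — no, that destroys positive definiteness. This is the crux: one cannot multiply $f$ by a compactly supported multiplier and simultaneously preserve positive definiteness and control the integral, unless the multiplier itself does the truncation in a positive-definite way.

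The correct route, which I expect the authors to follow, is to work on the Fourier side or to use an approximate-identity argument there. Writing $\ft f \ge 0$ for the (finite, by Bochner, since $f$ is continuous and integrable and positive definite) Fourier transform on the dual group $\widehat G$, the condition $f \in L^1$ gives $f = \check{\ft f}$; one approximates $\ft f$ by compactly supported pieces, or convolves $f$ with a suitable kernel and multiplies by another, arranging that the support conditions $\supp g_\pm \Subset \Omega_\pm$ hold by the openness of $\Omega_\pm$ together with compactness of $\supp\psi$ (so that a slightly enlarged support still fits inside $\Omega_\pm$), while $g(0)$ is normalized to $1$ at the end and $\int_G g = \ft g(0) \to \ft f(0) = \int_G f$. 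The main obstacle — and the reason this ``mere technicality'' needs care — is exactly the tension between (i) making the supports of $g_\pm$ \emph{compact} inside $\Omega_\pm$ rather than merely closed, (ii) keeping $g$ positive definite, and (iii) losing at most $\ve$ in the integral; resolving it requires choosing the localizing neighborhoods in the right order (first $V$ governing the integral loss via $L^1$-tail smallness, then $U$ small enough that enlarging $\supp f_\pm \cap V$ by $U$ stays within $\Omega_\pm$, then $W$ with $W-W\subset U$), and checking that multiplication by $\psi$ does not inflate the support beyond $\supp f_\pm + (W-W)$, which is compact once we have first arranged $f$ to have effectively compact support on the relevant scale. Taking $\ve \to 0$ then yields $\CV(\Omega_+,\Omega_-) \le \CCC(\Omega_+,\Omega_-)$, completing the proof.
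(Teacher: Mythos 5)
You have the trivial inequality and a correct diagnosis of where the tension lies, but the proposal goes astray exactly at the point you call ``the crux''. First, your multiplier $\psi$ is supported in a \emph{small} neighborhood $U$ of $0$, so $f\psi$ is supported in $U$ and its integral is roughly $\int_U f$, nothing like $\int_G f$: the localization is backwards. The paper's multiplier is the opposite kind of object: by the approximation-of-unity lemma (Lemma \ref{l:Kg}) there is a continuous positive definite $k$ with \emph{compact} support, $0\le k\le 1$, $k(0)=1$, and $k\ge 1-\ve$ on an arbitrarily large compact set $C$, which one chooses so that $\int_{G\setminus C}|f|<\ve$ (possible since $f\in L^1(G)$). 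Then $g:=fk$ is continuous, positive definite (Schur product theorem), $g(0)=1$, and---this is the decisive point you declared impossible---since $k\ge 0$ one has $g_\pm=f_\pm k$, hence $\supp g_\pm\subset \supp f_\pm\cap\supp k$, a closed subset of a compact set; so $\supp g_\pm\Subset\Omega_\pm$ with \emph{no enlargement of support at all} and therefore no need for any ``margin'' coming from openness. Moreover
$$
\int_G g\;\ge\;\int_C f-\ve\int_C|f|-\int_{G\setminus C}|f|\;\ge\;\int_G f-\ve\bigl(\|f\|_{L^1(G)}+2\bigr),
$$
so letting $\ve\to 0$ gives $\CV(\Omega_+,\Omega_-)\le\CCC(\Omega_+,\Omega_-)$. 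In other words, a compactly supported positive definite multiplier that is nearly $1$ on a huge compact set performs precisely the truncation ``in a positive-definite way'' whose existence you doubted; the ingredient you were missing is the existence of such multipliers.

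Second, the fallback you sketch (Fourier-side truncation, or convolving $f$ with a kernel and using openness of $\Omega_\pm$) does not close the gap. Convolution enlarges supports, $\supp(f*\phi)_+\subset\overline{\supp f_+ +\supp\phi}$, and since $\supp f_\pm$ are only closed, not compact, this set is neither compact nor necessarily contained in $\Omega_+$: openness of $\Omega_+$ provides no uniform neighborhood $U$ of $0$ with $\supp f_+ +U\subset\Omega_+$ when $\supp f_+$ is a non-compact closed subset of $\Omega_+$ (its distance to the complement may tend to $0$). The convolution-plus-margin argument is legitimate only when one starts from a function whose positive part is already compactly supported, which is how it is used in Section \ref{sec:account}, not here. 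Likewise, approximating $\ft f$ by compactly supported pieces controls $\supp\ft g$, not $\supp g_\pm$, which is what membership in $\FF(\Omega_+,\Omega_-)$ requires; indeed compact support of $\ft g$ is incompatible with compact support of $g$, so that route cannot produce the needed class member. As written, then, the proposal has a genuine missing idea rather than a repairable slip, and the missing idea is exactly the multiplication by the approximate unit $k$.
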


The corresponding statement for the Tur\'an problem (\ref{Turan-problem}) was proved in \cite{KR-2006} in a somewhat different variant. Our proof is analogous.

\begin{proof}
We only need to consider the case $0 \in \Omega_+$, for if $0 \not\in \Omega_+$ then  both values above are zero. Also, inclusion (\ref{inclusion-c}) implies $\CCCC(\Omega_+, \Omega_-) \le \CV(\Omega_+, \Omega_-)$, so we need to show the converse inequality only.

Let $\ve > 0$.  There is a function $f \in {\cal F}_1(\Omega_+, \Omega_-)$ such that $\int_G f \ge {\cal C}_1(\Omega_+, \Omega_-) - \ve$. Since $f \in L^1(G)$, there is a compact set $C \Subset G$ such that $\int_{G \setminus C} |f| < \ve$.  Then, in particular, $\int_C f \ge {\cal C}_1(\Omega_+, \Omega_-) - 2\ve$.

Next we will use the well-known fact that the constant one function ${\bf 1}$ can be approximated locally uniformly by continuous positive definite functions of compact support. As we will need this several times in our paper, let us formulate it as a lemma.

\begin{lemma}[{\bf Approximation of unity lemma}]\label{l:Kg} Let $C\Subset G$ and  $\varepsilon>0$ be arbitrary. Then there exists $k\gg 0$, $k\in C_{c}(G)$ (so continuous with compact support)
and $0 \le k \le 1$,  such that $k|_C \ge 1-\ve$ and $\|k\|_\infty=k(0)= 1$.
\end{lemma}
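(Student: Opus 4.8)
The plan is to build $k$ as a properly normalized self-convolution of the indicator of a small neighborhood of the identity, exploiting that such convolutions are automatically positive definite, continuous, compactly supported, and peak at $0$. Concretely, fix $C \Subset G$ and $\varepsilon > 0$. First I would choose a symmetric, relatively compact open neighborhood $V$ of $0$ in $G$ with $m_G(V) > 0$ (possible since $G$ is locally compact and $m_G$ is a Haar measure, so every nonempty open set has positive measure). Then set
$$
g_V := \frac{1}{m_G(V)}\, \chi_V * \widetilde{\chi_V} = \frac{1}{m_G(V)}\, \chi_V * \chi_{-V},
$$
using $V = -V$. This function is continuous (convolution of two $L^1$, indeed $L^2$, compactly supported functions), it is supported in $\overline{V-V}$ which is compact, it satisfies $g_V \gg 0$ because it has the form $\frac{1}{m_G(V)} h * \widetilde{h}$ with $h = \chi_V \in L^2(G)$, and $0 \le g_V \le 1$ with $g_V(0) = \frac{1}{m_G(V)} m_G(V \cap V) = 1 = \|g_V\|_\infty$.

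The remaining point is the local uniform approximation of the constant function $\mathbf{1}$ on the compact set $C$, i.e. $g_V|_C \ge 1-\varepsilon$ for $V$ small enough. For this I would use uniform continuity of the translation action on $L^1$ (or directly estimate the convolution): for $x \in C$,
$$
1 - g_V(x) = \frac{1}{m_G(V)}\bigl(m_G(V) - m_G\bigl(V \cap (x+V)\bigr)\bigr) = \frac{m_G(V \setminus (x+V))}{m_G(V)},
$$
so one needs $m_G(V \setminus (x+V))$ to be a small fraction of $m_G(V)$, uniformly for $x \in C$. The standard device is to first pick a symmetric relatively compact open neighborhood $U$ of $0$, then apply the continuity of translation in $L^1$ together with a compactness argument over $C$: cover $C$ by finitely many translates of a small symmetric neighborhood $W$ with $\|\chi_{x+U} - \chi_U\|_{L^1} < \delta$ for all $x \in W$, and then take $V$ to be a suitable scaling; alternatively, invoke that $\mathbf{1}$ is the identity of the measure algebra approximated by normalized indicators of shrinking neighborhoods. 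A clean route: since $C$ is compact and $0$ has a neighborhood basis of symmetric relatively compact open sets, one can choose $V$ so that $C \cdot V^{-1}$ considerations give $m_G(V \triangle (x+V)) < \varepsilon\, m_G(V)$ for every $x \in C$; then $g_V(x) \ge 1 - \tfrac12 m_G(V \triangle (x+V))/m_G(V) > 1 - \varepsilon$ for all $x \in C$. Finally set $k := g_V$.

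The main obstacle, and the only genuinely nontrivial step, is the uniformity of the estimate $m_G(V \setminus (x+V)) \le \varepsilon\, m_G(V)$ over all $x$ in the \emph{compact} set $C$ simultaneously — for a single $x$ this follows from continuity of Haar measure under translation, but one must upgrade it to a uniform statement, which is where compactness of $C$ and the group structure (closed under the relevant translations, relative compactness of $V-C$) enter. Everything else — positive-definiteness of $h * \widetilde{h}$, continuity and compact support of the convolution, and the normalization $k(0) = \|k\|_\infty = 1$ — is routine and follows from standard facts about Haar measure and $L^1(G)$ recorded in \cite{Rudin-book}. Once uniformity is in hand, choosing $V$ small enough gives $k|_C \ge 1-\varepsilon$ and the lemma is proved.
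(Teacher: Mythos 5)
There is a genuine gap, and it sits exactly at the step you yourself single out as the only nontrivial one. Your construction $k=g_V:=\frac{1}{m_G(V)}\,\chi_V*\widetilde{\chi_V}$ does give a continuous, compactly supported, positive definite function with $0\le g_V\le 1$ and $g_V(0)=\|g_V\|_\infty=1$, and your identity $1-g_V(x)=m_G\bigl(V\setminus(x+V)\bigr)/m_G(V)$ is correct. But the conclusion ``$g_V|_C\ge 1-\varepsilon$ for $V$ small enough'' is backwards: since $\supp g_V\subset\overline{V-V}$, shrinking $V$ makes $g_V$ vanish on all of $C$ outside a small neighborhood of $0$. What the estimate $m_G\bigl(V\triangle(x+V)\bigr)<\varepsilon\,m_G(V)$ for \emph{all} $x\in C$ really demands is a \emph{large}, almost translation-invariant set $V$ (a F\o lner-type set for the compact set $C$), and this cannot be extracted from the tools you name: continuity of translation in $L^1$ only controls translates $\tau_x\chi_U$ for $x$ in a small neighborhood of $0$ (for $x$ with $(x+U)\cap U=\emptyset$ one has $\|\tau_x\chi_U-\chi_U\|_1=2m_G(U)$, so covering $C$ by translates of a small $W$ gains nothing), and ``a suitable scaling'' does not exist in a general LCA group --- there is no dilation structure on, say, $\ZZ$, $\QQ_p$, or infinite products, even though the big-ball trick does work in $\RR^d$.

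The construction can be repaired, but it needs a genuinely different input: either invoke that every Abelian locally compact group is amenable, hence satisfies the F\o lner condition (for every $C\Subset G$ and $\varepsilon>0$ there is a relatively compact Borel set $V$ with $0<m_G(V)<\infty$ and $m_G\bigl((x+V)\triangle V\bigr)<\varepsilon\,m_G(V)$ for all $x\in C$), after which your formula for $1-g_V(x)$ finishes the proof; or follow the route of the sources the paper cites (Rudin 2.6.8, \cite[Lemma 2]{KR-2006}, \cite[Lemma~5]{GR}), which use the structure theorem: $G$ contains an open subgroup of the form $\RR^n\times K$ with $K$ compact, where large Euclidean balls times $K$ provide the nearly invariant sets, and the finitely many cosets of this subgroup meeting $C$ are handled through a positive definite function on the (finitely generated, discrete) quotient. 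Note also that the paper itself does not prove Lemma \ref{l:Kg} but refers to these sources; as written, your argument does not close the key uniform estimate, so the proof is incomplete.
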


For the proof, see e.g. \cite[2.6.8. Theorem]{Rudin-book} (where, however, the formulation is somewhat different) or, for precisely this form, \cite[Lemma 2]{KR-2006} or \cite[Lemma~5]{GR}. See also \cite[Problem 5]{God}.

Now consider $g := fk$.  Obviously, $g \in C(G)$ and $g(0) = 1$. Moreover, $g \gg 0$ as a product of positive definite functions\footnote{This is a nontrivial fact, which follows from the Schur Product Theorem: if the matrices $A = [a_{jk}]_{j=1,...,n}^{k=1,...,n}, B = [b_{jk}]_{j=1,...,n}^{k=1,...,n} \in \CC^{n\times n}$ are both positive definite, then so is their entry-wise (Schur- or Hadamard-) product $[a_{jk}b_{jk}]_{j=1,...,n}^{k=1,...,n}$, too. See also \cite[\S 85, Theorem 2]{Halmos}. The statement can be found e.g. in \cite[(32.8) (d)]{HewittRossII} and \cite[(32.9) Theorem]{HewittRossII}, see also \cite[Lemma 12(v)]{KrRe2}, \cite[Lemma 2.6.1 (iii)]{Krenedits}.}.
Since $\supp{g_\pm} \subset \supp{f_\pm} \cap \supp{k}$, i.e., a closed subset of the compact set $\supp k$, it is compact, too: $\supp{g_\pm} \Subset \Omega_\pm$. Thus, $g \in  \FC(\Omega_+, \Omega_-)$. Clearly,
$$
\int_G g \ge \int_{C} f - \ve \int_{C} |f| - \int_{G \setminus C} |f|
\ge \left( {\cal C}_1(\Omega_+, \Omega_-) - 2\ve \right) - \ve\|f\|_{L^1(G)} - \ve.
$$
Since $\ve > 0$ can be taken arbitrarily,  $\CCCC(\Omega_+, \Omega_-) \ge \CV(\Omega_+, \Omega_-)$ follows.
 \eop \end{proof}

Next we would like to comment on the notion of positive definiteness. Above we defined a positive definite function through \eqref{posdefdef}, but quite often positive definiteness of functions is understood with various  different meanings, which in many cases are equivalent from the point of view of the analyzed questions, but sometimes exhibit differences, too. There are two further major ways of defining (some kind of) positive definite functions differently, which we briefly mention here.

First, \emph{an almost everywhere defined measurable ``function''} (in precise terms, the respective equivalence class of functions) is called a \emph{function of positive type}, if it is locally Haar-integrable and if for ``test functions'' from $C_c(G)$ ($C_c(G)$ denoting the family of continuous functions of compact support) it holds
\begin{eqnarray} \label{typecond}
&\int_{G}f ~(\tilde{u} * u) ~d m_G \geq 0 \quad \text{ for all } u\in C_c(G) \quad \textrm{or, equivalently,} \nonumber\\
& (\widetilde{u} * u * f)(0) \ge 0 \quad \text{ for all } u\in C_c(G) .
\end{eqnarray}

This definition follows Godement \cite{God}, but adopts the later terminology of e.g. Folland \cite{Folland}.
Note the distinction between the classes of positive definite \emph{functions}, defined \emph{finitely everywhere} and satisfying \eqref{posdefdef}, and \emph{``functions'' of positive type}, defined only a.e. in accordance with \eqref{typecond}. As a matter of fact, one can define functions of positive type \emph{with respect to a given class of functions}, playing the role of $C_c(G)$ above---it seems that this idea was first analyzed by Cooper \cite{Cooper}.

Also, positive definiteness is sometimes understood simply as \emph{nonnegativity of the Fourier transform}, so that positive definite functions are tacitly assumed to be functions from the inverse image with respect to the Fourier transform of (some family of) nonnegative functions (or measures, or distributions). This is the working assumption e.g. in the paper of Logan \cite{Log}. However, there can be a huge ambiguity here with respect to classes of functions, classical, $L^2$, or distributional Fourier transforms etc. Therefore, if such a situation is encountered, we prefer to make it clear that the function in question has nonnegative Fourier transform (and in what sense).

Each positive definite function in the sense of  \eqref{posdefdef} is trivially bounded by its value at zero, i.e., $|f(x)| \le f(0)$ for all $x \in G$.

A positive definite function needs to be neither continuous nor even measurable. However, each measurable positive definite function coincides locally a.e.\footnote{A property is true locally almost everywhere when the exceptional set is a locally null set, i.e., its intersection with any Borel set of finite measure is a Borel set of zero measure, cf.~\cite{Folland}. This concept is needed when one is dealing with Haar measures that are not $\sigma$-finite.} with a continuous positive definite function, see \cite[(32.12) Theorem]{HewittRossII}. Also, for a bounded continuous function, being a positive definite function is equivalent to being a function of positive type \cite[3.35 Proposition]{Folland}.

By \cite[3.21 Corollary and 3.35 Proposition]{Folland}, every measurable and bounded function of positive type agrees locally a.e. with a continuous positive definite function. Further, in the extremal quantities, what we are to investigate, our goal function (to be maximized) is only an integral (over the full group G). Whence if we assume boundedness of a function of positive type, we can as well restrict considerations to continuous positive definite functions. The key lies slightly deeper when possibly unbounded functions of positive type are concerned, but even for those \emph{local boundedness at zero}---which is essentially the normalization of our extremal problem, i.e. the condition $f(0)=1$---suffices. This, however, requires (a strong version of) the celebrated Gelfand-Raikov Theorem, too, see e.g. \cite[Theoreme 3]{God} or \cite[Theorem 7]{GR}. If, however, we use the Gelfand-Raikov Theorem, then even \emph{measures of positive type} can be handled the same way.

\begin{definition}\label{def:postypemeasureextremal} If $\mu$ is a finite regular Borel measure on $G$, then we define the ``local essential supremum of the effect of $\mu$'' as \begin{equation}\label{eq:cmudef}
c(\mu):=\inf_{0\in U \,{\rm open}} \quad \sup_{0\le u\in C_c(G), ~\supp u \Subset U, ~\int_G u=1} \quad \int_G u * \widetilde{u} ~d\mu.
\end{equation}
\end{definition}

Note that $c(\mu)$ is the proper generalization of the function value of a continuous function at 0. Indeed, if $d\mu =f dm_G$ (at least in some neighborhood of $0$), with $f$ continuous at $0$, then one can easily derive that $c(\mu)=f(0)$. Also note that $c(\mu)$ is not necessarily finite: but in fact, \emph{in case $\mu$ is of positive type}, the property that $c(\mu)<\infty$ is \emph{equivalent} to the assertion that $\mu$ is absolutely continuous and its density function (which is determined only a.e. by the measure) can be taken a continuous function $f\gg 0$ with $f(0)=c(\mu)$. For the proof of this converse statement one needs to use (a strong version of) the Gelfand-Raikov Theorem---for details see e.g. \cite[Theorem 7]{GR}.

In view of the above, we can as well consider the more general ``function class''---class of measures---normalized by assuming $c(\mu)=1$. Then we could write
$$
{\cal F}^\star(\Omega_+, \Omega_-) :=
\left\{ \mu ~\textrm{of positive type} : c(\mu) = 1,  \supp{\mu_+} \subset \Omega_+, \supp{\mu_-} \subset \Omega_-  \right\}.
$$
Correspondingly, we can define the respective extremal value
$$
{\cal C}^\star(\Omega_+, \Omega_-):=\sup \left\{\int_Gd\mu~:~\mu\in \FS(\Omega_+, \Omega_-) \right\}.
$$
Note that, once again, for any meaningful interpretation of the extremal problem (the goal function itself), the measure $\mu$ must be of finite total variation.

Then, according to the above, we would conclude that in fact the measures in \linebreak $\FS(\Omega_+, \Omega_-)$ are all locally absolutely continuous, and the density functions of the measures appearing in $\FS(\Omega_+, \Omega_-)$ can be identified with the continuous positive definite functions from $\FV(\Omega_+, \Omega_-)$. Therefore, the extension of the extremal problem from $\FV$ and $\CV$ to $\FS$ and ${\cal C}^\star$ is only formal, with no actual change.

\section{An account of related work on Delsarte type extremal constants on various function classes} \label{sec:account}

This section is only useful to see that our general investigations do indeed cover the actual applications of the Delsarte method in e.g. sphere packing. It should have been obvious, but in fact it is not because various authors in various papers used different formulations and in particular different function classes for their formulations of the analogous questions. Of course, the variety is basically justified by the general equivalence of most of these choices---however, these need to be proven. Moreover, there did occur non-equivalent versions, too, which, again need to be clarified. The reader should not expect any attracting details and may want to fully skip this section, but we felt it an obligation to tediously clarify these connections, however boring.

First, to better facilitate our discussion to existing literature, we extend the definition of function classes $\FC(X,Y)$ and $\FF_1(X,Y)$---as well as the respective Delsarte-type constants \eqref{C-problem}---to arbitrary Borel measurable and symmetric sets $X,Y \in\BBB$. In the large, whenever these classes and constants appear in the paper, we still refrain to open sets---the extended definition will be in effect exclusively in this section for the sake of the comparisons we want to explain.

As a first observation, we need to mention that in view of the obvious fact that the support of a continuous function is closed, by definition of $\FF_{1}(X,Y)$ we have $\FF_{1}(X,Y) = \bigcup_{E\subset X, F\subset  Y} \FF_{1}(E,F)$, where here the union runs on \emph{closed} sets $E, F$ contained in $X, Y$, respectively. Therefore, we have $\CCC_{1}(X,Y)=\sup \{ \CCC_{1}(E,F)~:~ E\subset X, F\subset Y, ~~E,F\,{\rm closed} \}$.

We will need a little more, namely, that the above limit or supremum relation holds true with compact sets (in place of closed ones) as well. The easiest is to prove this by means of Lemma \ref{l:Kg} above. The argument of the proof of Theorem \ref{equivalence-theorem} goes through mutatis mutandis. So, for general sets, too,
$$
\CCC_{1}(X,Y)=\sup \{ \CCCC(E,F)~:~ E\Subset X, F\Subset Y\},
$$
and, consequently,
\begin{equation}
\label{equiv-arb-set}
\CCC_{1}(X,Y) = \CCCC(X,Y) .
\end{equation}

In connection to this, however, let us warn the reader that for open sets it is not automatic that the extremal constant of the closures would match to that of the original open sets---see the counterexample of Theorem 7 of \cite{KR-2006}. Therefore, we will be prudently restricting ourselves in stating (and even proving) that nevertheless, at least in the classical Euclidean spaces and convex sets, the equivalence of closed or open copies holds true.

\begin{proposition} Let $X,Y \subset \RR^d$ be centrally symmetric (with respect to the origin) convex sets with the origin in the interior of $X$. Then $\CCC(\intt X, \intt Y)=\CCC(\overline{X}, \overline{Y})$.
\end{proposition}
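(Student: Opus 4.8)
The plan is to establish the equality by proving the two inequalities separately, using that the only difference between the open and closed versions lies in whether we allow the supports of $f_\pm$ to touch the boundary of the convex bodies. The inequality $\CCC(\intt X, \intt Y) \le \CCC(\overline{X}, \overline{Y})$ is immediate from monotonicity, since $\intt X \subset \overline{X}$ and $\intt Y \subset \overline{Y}$. So the real content is the reverse inequality $\CCC(\overline{X}, \overline{Y}) \le \CCC(\intt X, \intt Y)$.

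First I would invoke \eqref{equiv-arb-set} (or rather its proof via Lemma~\ref{l:Kg}), which tells us that $\CCC(\overline{X}, \overline{Y}) = \sup\{\CCC(E,F) : E \Subset \overline{X},\ F \Subset \overline{Y}\}$, so it suffices to show that for every pair of compact sets $E \Subset \overline{X}$ and $F \Subset \overline{Y}$ we have $\CCC(E,F) \le \CCC(\intt X, \intt Y)$. Here the key geometric fact is that for a centrally symmetric convex body $X$ with $0 \in \intt X$, the dilates $rX$ for $r < 1$ exhaust $\intt X$, and moreover every compact subset $E$ of $\overline{X}$ is contained in $rX$ for some $r > 1$ sufficiently close to $1$ — hence, scaling the other way, $\tfrac{1}{r} E \Subset \intt X$ for appropriate $r>1$. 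So the plan is: given $f \in \FF(E,F)$, consider the rescaled function $f_r(x) := f(rx)$ for $r$ slightly larger than $1$. Then $f_r$ is still continuous, positive definite (dilation preserves positive definiteness on $\RR^d$), satisfies $f_r(0) = f(0) = 1$, and $\supp{(f_r)_\pm} = \tfrac{1}{r}\supp{f_\pm} \subset \tfrac{1}{r} E \Subset \intt X$ (similarly for the negative part inside $\intt Y$), so $f_r \in \FF(\intt X, \intt Y)$. Finally $\int_{\RR^d} f_r = r^{-d} \int_{\RR^d} f$, so letting $r \downarrow 1$ we recover $\int_{\RR^d} f$ in the limit, giving $\CCC(E,F) \le \CCC(\intt X, \intt Y)$ and hence the claim.

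The main obstacle — really the only nontrivial point — is justifying the geometric covering claim: that a compact subset $E$ of $\overline{X}$ (where $\overline{X}$ is the closure of the centrally symmetric convex set $X$ with $0$ in its interior) can be pushed strictly inside $\intt X$ by a dilation $\tfrac{1}{r}$ with $r > 1$. This is where central symmetry and convexity with $0 \in \intt X$ enter essentially. One clean way: let $\|\cdot\|_X$ be the Minkowski gauge of $\overline{X}$, which is a norm (by central symmetry and convexity) whose closed unit ball is exactly $\overline{X}$; since $E \subset \overline{X}$ is compact and $\|\cdot\|_X \le 1$ on $\overline{X}$, we have $\max_{x \in E} \|x\|_X \le 1$, and then $\|\tfrac{1}{r} x\|_X \le \tfrac{1}{r} < 1$ for all $x \in E$, so $\tfrac{1}{r} E$ lies in the open unit ball, i.e.\ in $\intt X$; and since $\tfrac1r E$ is compact, $\tfrac1r E \Subset \intt X$. (One should note $\intt X$ is exactly the open unit ball of this gauge, which uses $0 \in \intt X$ so that the gauge is finite everywhere and $\overline{X}$ is bounded so that it is positive definite away from $0$ — i.e.\ $\overline X$ is genuinely a convex \emph{body}; if $X$ is unbounded one can either treat it separately or note that the statement is to be understood for bounded $X$, which is the case of interest for applications to balls.) I would also remark that dilation invariance of positive definiteness on $\RR^d$ is standard — if $f \gg 0$ then $x \mapsto f(rx)$ is positive definite since the defining sums in \eqref{posdefdef} are just reindexed by $x_n \mapsto r x_n$ — and similarly the support transforms as claimed and the integral scales by $r^{-d}$, all of which are routine.
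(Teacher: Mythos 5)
Your core argument---dilate $f$ by a factor $r>1$ so that the compact supports shrink strictly inside the interiors, then let $r\downarrow 1$---is exactly the paper's argument (the paper derives the push-in from $r\overline{X}+\delta(1-r)B\subset\intt X$, convexity and fatness rather than from the Minkowski gauge, but that is cosmetic), and in the main case it is correct. However, there is a genuine gap: the proposition does not assume $0\in\intt Y$, and your phrase ``similarly for the negative part inside $\intt Y$'' silently does. If $\intt Y\neq\emptyset$, central symmetry does force $0\in\intt Y$ (a point you should at least record, since your gauge argument applied to $Y$ needs it), but the degenerate case $\intt Y=\emptyset$---by central symmetry the only alternative, e.g.\ $Y$ a symmetric set inside a hyperplane---is not covered at all: there is no way to push a nonempty $\supp f_-\Subset\overline{Y}$ into $\intt Y=\emptyset$ by dilation. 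The paper handles this case by a separate observation: since $\supp f_-\subset\overline{Y}$ and $\overline{Y}$ has dense complement, continuity forces $f\ge 0$ everywhere, so $f_-\equiv 0$ and the dilated function lands in $\FF(\intt X,\emptyset)$. Without something of this kind your proof does not establish the proposition as stated.

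A second, smaller issue is the unbounded case, which the statement also allows and which you explicitly decline to treat (``one can either treat it separately or note that the statement is to be understood for bounded $X$''). The paper disposes of it by showing both constants are then $+\infty$, via the convolution squares $\frac{1}{|Z|}\chi_Z*\chi_Z$ with $Z=\lambda X\cap RB$ and $R\to\infty$. In fact your own construction survives unboundedness: the gauge of a closed convex set with $0$ in its interior still satisfies $\intt\overline{X}=\{p<1\}$ even when it is not a norm, so the scaling step transfers functions of arbitrarily large integral from the closed to the open class; but as written you have left that case open rather than proved it. (Incidentally, the appeal to \eqref{equiv-arb-set} is unnecessary, since $\FF(\overline{X},\overline{Y})$ already requires compactly supported $f_\pm$ by definition.)
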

An analogous statement holds in $\TT^d$ for small enough sets $X,Y$ such that not even the closures overlap with themselves---we leave the details to the reader.

\begin{proof}
We will use the fact that a convex set $X\subset \RR^d$ \emph{with
nonempty interior} is \emph{fat}, meaning that
$\overline{X}=\overline{\intt X}$. As $0\in \intt X$, we thus have that
with an appropriately small $\de>0$ the closed ball $\de B \subset \intt X$, whence by
convexity $r\overline{X}+\de(1-r) B \subset \intt X$ and $r\overline{X}
\subset \intt X$ for $0\le r <1$.

In case $X$ is unbounded, it is easy to see that both extremal constants become $+\infty$. Indeed, taking $Z:= \lambda X \cap RB$ with any $\lambda <1/2$ and $R>0$, the function $f:=\frac{1}{|Z|}\chi_{Z} * \chi_{Z}$ belongs to both function classes and has as large integral as $|Z|$, which tends\footnote{Note that the unbounded convex set with $\de B\subset X$ contains a cone of arbitrarily large height and base $\delta B_{d-1}$, whence has infinite volume.} to $|\lambda X| = \infty$ with $R\to \infty$.

So let us assume that $X$ is bounded, which also implies that both extremal constants are finite: for any positive definite function $f$ with $\supp f_+\subset X$ we have $0\le \int_{\RR^d} f \le \int_{\RR^d} f_{+} = \int_{\supp f_{+}} f \le \int_{\supp f_{+}} f(0) \le |X| \cdot 1 = |X| < \infty$.

First consider the case when also $0 \in \intt Y$ and hence also $Y$ is fat.

Obviously $\CCC(\intt X, \intt Y)\le \CCC(\overline{X}, \overline{Y})$.
Let us take any $\ve>0$ and $f\in \FC(\overline{X}, \overline{Y})$ with
$\int_{\RR^d} f \ge \CCC(\overline{X}, \overline{Y})-\ve$. Take any $R>1$,
put $r:=1/R<1$, and consider $g(x):=f(Rx)$. Obviously $\supp g_{\pm} = r
\supp f_{\pm} \Subset r \overline{X}, r \overline{Y} \subset \intt X,
\intt Y$, respectively. Also, $g \gg 0$, $g(0)=f(0)=1$ and $g \in
C_c(\RR^d)$, so $g \in \FC(\intt X, \intt Y)$ and, moreover, $\int_{\RR^d}
g(x) dx = \int_{\RR^d} f(Rx) dx = \int_{\RR^d}  f(y) r^d dy  \ge r^d\left(
\CCC(\overline{X}, \overline{Y})-\ve \right)$ furnishing $\CCC(\intt X,
\intt Y)\ge r^d (\CCC(\overline{X}, \overline{Y})-\ve)$. With this proven,
we can allow $R\to 1+0$, that is $r\to 1-0$ and $\ve \to 0+$, furnishing
the result.

If $0 \not\in \intt Y$, then $Y$ being centrally symmetric is actually lying in a hyperplane. So it remains to deal with the ``strange'' case when $\intt Y = \emptyset $ and $|Y|=0$.  As above, take any $\ve>0$ and $f\in \FC(\overline{X}, \overline{Y})$ with $\int_{\RR^d} f \ge
\CCC(\overline{X}, \overline{Y})-\ve$. In principle the function $f$ may attain negative values (namely, on $\overline{Y}$), but as $\overline{Y}$ is on the boundary of $\RR^d\setminus \overline{Y}$, we conclude by continuity that $f$ is nonnegative all over $\RR^d$.

Then the same construction as above gives a function $g(x):=f(Rx)$
belonging to $\FC(\intt X,\emptyset)$, and that yields, as above, the
result.
 \eop \end{proof}

In \cite{gorbachev:ball}, Gorbachev studies Tur\'an's problem in the following setup. For a given centrally symmetric body $\Omega$, he maximizes $g(0)$ in the class of continuous even functions~$g : \R^d \to \R$ satisfying
\begin{enumerate}
\item $g(y) = \int_{\Omega} \widehat{g} (x) e^{2\pi iyx} dx$,
\item $g(y) \ge 0$ for all $y \in \R^d$,
\item $\widehat{g}(0) = \int_{\R^d} g(y) dy = 1$.
\end{enumerate}

In this setup, $\widehat{g}$ corresponds to our function $f$. Here $f = \widehat{g}$ is continuous (as the Fourier transform of an $L^1$-function) and positive definite (as the Fourier transform of a nonnegative function). By (iii) we also have  $f(0)=1$, and thus  $f$ lies in our class ${\cal F}_1(\Omega, \emptyset)$. The only difference is that in general we consider open sets $\Omega$ while  in all examples in \cite{gorbachev:ball}  $\Omega$ is closed.

\bigskip

In \cite{gorbachev:sphere}, Gorbachev studies the Delsarte problem for the class of positive definite, continuous, real functions $f \in L^1(\R^d)$ with their Fourier transforms vanishing outside of the Euclidean ball $rB$ of a given radius $r=r_d$. These functions are entire functions of the spherical exponential type $r$. Gorbachev uses the analog of the Delsarte problem on this class to derive an upper estimate on the density of any possible spherical packing, and then he gives the exact solution of the Delsarte problem on this class for the concrete radius that depends on the dimension.

The difference in this setting is that restriction on the function class is imposed on the Fourier transform side, namely  $\supp \widehat{f} \Subset rB$. That is,  the class of functions in \cite{gorbachev:sphere}   is $\GG(B ; rB)$, where in general
\begin{equation}\label{eq:Gorbachevclass}
\GG(W, Q) :=
\left\{ f \in C(\RR^d)  \cap L^1(\RR^d)  :
f \gg 0, f(0) = 1,  \supp{f_+} \subset W,  \supp \widehat{f} \subset Q \right\}.
\end{equation}
Note that we did not mention $f_{-}$---and indeed, it is only taken that $\supp f_{-} \subset \RR^d$, that is, no restriction. Once a strong restriction is applied on the Fourier transform side---e.g. if $Q$ is bounded and hence $\supp \widehat{f} $ is compact, meaning that $f$ is an entire function---it is no longer possible to restrict, e.g, to $\supp{f_-} \Subset \RR^d$, as $f_{+}$ already supported compactly (say when $W$ is also bounded, like in the central case when $W=B$) would then imply $\supp f \Subset \RR^d$, which is not possible for entire functions (if we assume $f(0)=1$, i.e. $f \not\equiv 0$, too). For a similar comment see the end of page 699 in \cite{cohn:packings}.

Gorbachev proves the estimate $\Delta_d \le \frac{\omega_d}{2^d \DD^\GG(B,rB)}$ ($\forall r>0$) for the  maximum packing density\footnote{$\Delta_d$ is the maximal possible density of a packing of $\R^d$ by spheres. We postpone going into details on this and related notions of density until our Section~\ref{sec:auud}. For a discussion on densities of spherical packings see, e.g.,~\cite{cohn:packings}, (in particular Appendix A of ~\cite{cohn:packings}) and references therein. }
$\Delta_d$ of $\RR^d$ by unit balls $B$ using Poisson summation, and then computes the exact value of $\DD^\GG(B,r_dB)$, where his extremal constant  is
\begin{equation}\label{eq:Gorbiconst}
\DD^\GG(W,Q):= \sup \left\{ \int_{\RR^d} f~:~ f \in \GG(W,Q)\right\}.
\end{equation}

Obviously, as is remarked in \cite{cohn:packings}, one can then write $\Delta_d  \le \lim_{r\to \infty} \frac{\omega_d}{2^d \DD^\GG(B,rB)} = \frac{\omega_d}{2^d \DD^\GG_0(B)}$, where in general $\DD^\GG_0(W) := \sup \left\{ \int_{\RR^d} f~:~ f \in \GG_0(W)\right\}$ with
$$
\GG_0(W):=\left\{ f \in C(\RR^d) \cap L^1(\RR^d) :  f \gg 0, f(0) = 1, \supp{f_+} \subset W, \supp \widehat{f} \Subset \RR^d \right\}.
$$
However,  the  approach in \cite{gorbachev:sphere} found the exact constant $\DD^\GG(B,rB)$ only for the special value $r_d =  2 q_{d/2}$, where $q_{d/2}$ is the first positive root of the Bessel function $J_{d/2}$ of the first kind, which is of course smaller than the actual value of $\DD^\GG_0(B)$. As a result, his upper estimation of the packing density $\Delta_d$ exceeded $\frac{\omega_d}{2^d \DD^\GG_0(B)}$, whence could not be sharp.

Let  us now record that our definition of the Delsarte constant \emph{in case of the unit ball} is equivalent to $\DD^\GG_0(B)$, the limiting case of $\DD^\GG(B,rB)$ (as it was already stated in \cite{gorbachev:sphere} and is also mentioned by Cohn and Elkies on page 694 of \cite{cohn:packings} with respect to their definition, see below).

It is interesting to note that a question of the same type had been risen in a recent paper~\cite{GOR:JGeomAnal2021} by Gon\c{c}alves, Oliveira e Silva and Ramos for closely related extremal values ${\mathbb A}_+(d)$ and ${\mathbb A}_-(d)$, $d$ being the dimension, in problems they name the sign uncertainty principles. In particular, the first equality ${\mathbb A}_s(d)={\mathbb A}_s^{\mathcal B}(d)$ (with the sign $s \in\{-,+\}$) in Conjecture~1 in~\cite{GOR:JGeomAnal2021}---proven there for ${\mathbb A}_+(1)$ as Theorem 1---is very similar to Gorbachev's result in Proposition~\ref{prop:Gorbachev} below. These can be also considered as an example of work in the spirit of our Section~\ref{sec:EquivFormulations}: the value of an extremal problem does not depend on a choice of a particular function class.

The quantity denoted as ${\mathbb A}_{LP}(d)$ in \cite{GOR:NewSign} is equivalent to the Delsarte problem for the ball: more precisely, ${\mathbb A}_{LP}(d)=1/\DD(B)^{1/d}$. It has been conjectured for long that the problem ${\mathbb A}_-(d)$ is also equivalent to these problems. See Conjecture 7.2 in \cite{cohn:packings}, recalled by Gon\c{c}alves, Oliveira e Silva and Ramos in the equivalent form ${\mathbb A}_-(d)={\mathbb A}_{LP}(d)$ as \cite[Conjecture~1.5]{GOR:NewSign}; the equivalence  of the two formulations proven already in \cite{CohnGon:InvMath2019}.

The existence of extremal functions for the problems ${\mathbb A}_s(d)$, $s \in \{+,-\}$, was shown in~\cite{GOS:JMAA2017, CohnGon:InvMath2019}, compare to our Proposition~\ref{pr:existence-extremal} and Remark~\ref{rem:LCA-existence} below.

Now let us come back to the announced statement about the equivalence of the problems $\DD^\GG_0(B)$ and $\DD(B)$. In fact, this is not trivial at all; below we record the proof, kindly provided to us by Dmitry Gorbachev in personal communication. Note that this proof was not written down previously. We do not see a proof for the analogous fact in a more general setting---radial symmetry is important in the construction, so already for convex bodies $W \subset \RR^d$ the situation is unclear.

\begin{proposition}[Gorbachev]\label{prop:Gorbachev} For the ball $B\subset\RR^d$, we have $\DD^\GG_0(B)=\DD(B)$.
\end{proposition}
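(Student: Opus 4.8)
The plan is to establish the two inequalities $\DD^\GG_0(B) \le \DD(B)$ and $\DD(B) \le \DD^\GG_0(B)$ separately, the first being essentially trivial and the second being the substance of the proposition.

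For the easy direction, $\DD^\GG_0(B) \le \DD(B)$: if $f \in \GG_0(B)$, then $f$ is continuous, in $L^1(\RR^d)$, positive definite, $f(0)=1$, and $\supp f_+ \subset B$, so $f \in \FF_1(B, \RR^d)$, whence $\int_{\RR^d} f \le \CCC_1(B,\RR^d) = \DD_1(B) = \DD(B)$ by Theorem~\ref{equivalence-theorem}. Taking the supremum over $f \in \GG_0(B)$ gives the claim. (The extra constraint $\supp \widehat f \Subset \RR^d$ only shrinks the class, so it can only decrease the supremum.)

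For the hard direction, $\DD(B) \le \DD^\GG_0(B)$, I would start from an arbitrary $f \in \FF(B,\RR^d)$ with $\int_{\RR^d} f \ge \DD(B) - \ve$ (legitimate by Theorem~\ref{equivalence-theorem}, since $\CCC = \CCC_1$), and try to modify it into a function in $\GG_0(B)$ — i.e., force the Fourier transform to be compactly supported — while changing the integral by at most $O(\ve)$ and keeping all the constraints ($f\gg 0$, value $1$ at the origin, positive part supported in $B$). Here is where radial symmetry enters: by the averaging remark in the introduction (replacing $f$ by its average over the rotation group $O(d)$, which preserves positive definiteness, the value at $0$, the integral, and the inclusion $\supp f_+ \subset B$ since $B$ is radial), we may assume $f$ is radial. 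Then $f$ is determined by a one-variable profile, and its Fourier transform is again radial; the construction should be a one-dimensional one on the radial profile. The natural device is to convolve $\widehat f$ with a suitable nonnegative, compactly supported radial bump (a Fej\'er-type kernel), equivalently to multiply $f$ by a nonnegative radial positive definite function $\psi_\delta$ with $\psi_\delta(0)=1$, $\psi_\delta \to 1$ locally uniformly, and $\widehat{\psi_\delta}$ compactly supported; then $f\psi_\delta$ has compactly supported Fourier transform (the product of compactly supported distributions — here one needs $f \in L^1$ so $\widehat f \in C_0$, and $\widehat{\psi_\delta}$ compactly supported, so $\widehat{f\psi_\delta} = \widehat f * \widehat{\psi_\delta}$ is continuous with compact support), is still positive definite (Schur product theorem, as in the proof of Theorem~\ref{equivalence-theorem}), satisfies $(f\psi_\delta)(0)=1$, and has $\supp (f\psi_\delta)_+ \subset \supp f_+ \subset B$. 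The integral $\int f\psi_\delta \to \int f$ as $\delta \to 0$ by dominated convergence (using $|f\psi_\delta| \le |f| \in L^1$ and pointwise convergence). Choosing $\delta$ small and then letting $\ve \to 0$ finishes it.

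The main obstacle — and the reason this is "not trivial at all" as the text warns — is the construction of the multiplier $\psi_\delta$: it must be simultaneously (a) positive definite, (b) radial with $\psi_\delta(0)=1$, (c) convergent to $1$ locally uniformly as $\delta\to 0$, and (d) band-limited, i.e. $\widehat{\psi_\delta}$ supported in a ball. In one dimension the Fej\'er kernel $\big(\tfrac{\sin(\pi\delta t/2)}{\pi\delta t/2}\big)^2$ does exactly this, but the radial $d$-dimensional analogue is more delicate: one wants $\widehat{\psi_\delta} = c_\delta\, \chi_{\delta B} * \chi_{\delta B}$ (a nonnegative radial band-limited function), and then $\psi_\delta$ is its inverse Fourier transform, which is radial, positive definite, band-limited, with $\psi_\delta(0) = \int \widehat{\psi_\delta} = 1$ after normalization; local uniform convergence $\psi_\delta \to 1$ follows because $\widehat{\psi_\delta}$ is an approximate identity. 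This is precisely where Gorbachev's radial construction is needed and where no extension to general convex bodies $W$ is known — for non-radial $W$ one cannot guarantee that the band-limiting multiplication keeps $\supp f_+$ inside $W$ while still being a legitimate approximate-identity-type multiplier adapted to $W$'s geometry. I would present the $d$-dimensional radial bump construction carefully and verify properties (a)--(d), then assemble the two inequalities as above.
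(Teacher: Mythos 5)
Your easy direction is fine, and your instinct that the whole difficulty is in forcing $\supp\widehat f$ to be compact while keeping $\supp f_+\subset B$ is exactly right --- but the device you propose does not do this, and in fact cannot. You multiply $f$ by a band-limited bump $\psi_\delta$ (with $\widehat{\psi_\delta}$ compactly supported) and claim $\widehat{f\psi_\delta}=\widehat f * \widehat{\psi_\delta}$ is compactly supported. That is false: the convolution of $\widehat f\in C_0$ (which is \emph{not} compactly supported --- indeed for $f\in\FF(B,\RR^d)$, $f$ has compact support, so $\widehat f$ is entire and its support is all of $\RR^d$) with a compactly supported kernel has support contained in $\supp\widehat f+\supp\widehat{\psi_\delta}$, which is unbounded. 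Worse, your output function $f\psi_\delta$ still has compact support, so if its Fourier transform were compactly supported it would be an entire function vanishing outside a compact set, hence $\equiv 0$, contradicting $(f\psi_\delta)(0)=1$ --- this is precisely the obstruction the paper points out when discussing the class $\GG(W,Q)$. You have the two sides of the Fourier pairing swapped: multiplication by a band-limited $\psi_\delta$ preserves $\supp f_+\subset B$ but does nothing for $\supp\widehat f$, whereas the operation that does compactify the spectrum is \emph{convolution} of $f$ with a band-limited approximate identity $\varphi_R$ (so that $\widehat{f*\varphi_R}=\widehat f\,\widehat{\varphi_R}$), and that operation destroys the constraint $\supp f_+\subset B$, because the positive mass of $f$ leaks outside $B$ through the tails of $\varphi_R$.

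Recovering the constraint after convolving is the genuinely nontrivial content, and it is missing from your sketch. The paper's (Gorbachev's) argument first perturbs $g\in\FV(B,\RR^d)$ to $f=\frac{1}{1+\ve}(g+\ve h)$, where $h$ is a special radial positive definite function with $\widehat h$ compactly supported, $\widehat h(0)=0$, $h(0)=1$, and the quantitative negativity $h(x)\le -\kappa|x|^{-(d+1)}$ for $|x|\ge 1-\delta$; this $h$ is built from Yudin's function (Bessel-function machinery, which is where radial symmetry truly enters --- not in a symmetrization of $f$). The perturbation costs only a factor $\frac{1-\ve}{1+\ve}$ in the integral but makes $f$ decay like $-c\,\delta|x|^{-(d+1)}$ outside $(1-\delta)B$. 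Only then does one convolve with $\varphi_R(x)=R^d\widehat\eta(Rx)$ ($\eta$ Schwartz, $\supp\eta\subset B$), and a three-piece estimate of $(f*\varphi_R)(x)$ for $|x|\ge 1$ shows that for $R$ large the leaked positive contribution, controlled by the polynomial tail bound $\varphi_R(x)\le C_\eta R^{-(d+1)}|x|^{-(2d+1)}$, is dominated by the built-in negativity, so $f*\varphi_R\le 0$ outside $B$; after normalizing at $0$ one lands in $\GG_0(B)$ with essentially the same integral. Without an ingredient like $h$ providing quantitative negativity outside the ball, the convolution step cannot be closed, so your proposal as written has a fatal gap at its central claim.
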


\begin{proof} (Gorbachev)
Since for any symmetric Borel set $W \subset \RR^d$ obviously we have $\GG_0(W) \subset \FV (W, \RR^d) $, we find $\DD^\GG_0(W) \le {\mathcal D}_1(W)$, and \eqref{equiv-arb-set} yields the estimate  $ {\mathcal D}(W) \ge \DD^\GG_0(W)$.

The nontrivial part of the argument is thus to show that $\DD(B) \le  \DD^\GG_0(B)$ for the ball~$B$.
The proof depends on two constructions by Yudin and by Gorbachev, respectively; we describe them in the following two lemmata. For the reader's convenience, we also provide full proofs.

Let $J_{\alpha}$ denote the Bessel function of the first kind, and let
$
j_{\alpha}(t) = \Gamma(\alpha +1) \left( \frac{2}{t} \right)^{\alpha} J_{\alpha}(t)
$
be the normalized Bessel function with the property $j_{\alpha}(0) = 1$. We denote by $q_{\alpha}$ the first positive zero of~$j_{\alpha}$.

\begin{lemma}[Yudin]\label{lem-Y}
The functions
\begin{equation}\label{Yudin-function}
Y_d(t) :=  \frac{j_{\frac{d}{2}-1}^{2}(t)}{1 - t^2 / q_{\frac{d}{2}-1}^{2}}, \quad t \in [0,\infty),
\end{equation}
and
$$
y_d(x) := Y_d(|x|), \quad x \in \R^d,
$$
have the following properties:
\begin{itemize}
\item[{\rm (i)}] $Y_d(t) \ge 0 ~{\rm for} ~ 0 \le t \le q_{\frac{d}{2} -1}, \quad Y_d(t) \le 0 ~{\rm for} ~   t \ge q_{\frac{d}{2} -1}$,
\item[{\rm (ii)}] $\widehat{y}_d \ge 0, \quad \supp{ \widehat{y}_d } \Subset 2B, \quad \widehat{y}_d(0) = 0$.
\end{itemize}

\end{lemma}

\begin{proof}
The construction presented in the lemma and the proof of the above properties were given by Yudin in~\cite{Yudin} in a much more general case. We give here a proof adopted to our particular situation.

We use the known fact that the function $u(x) = j_{\frac{d}{2}-1}\left( q_{\frac{d}{2}-1}|x| \right)$ is the ``first Dirichlet eigenfunction of the ball $B$'', (with the first eigenvalue $q_{\frac{d}{2}-1}^2$), i.e.
\begin{align*}
\Delta u(x) & =  - q_{\frac{d}{2}-1}^2  u(x), \quad x \in B, \\
u \big|_{S} & =   0,
\end{align*}
where $S := \partial B$ denotes the sphere of radius $1$ in $\R^d$. In fact, looking for radial solutions of the above Dirichlet eigenvalue problem (with arbitrary eigenvalue) it is natural to re-write the Laplace operator in radial coordinates, which then leads to a scaled version of the Bessel differential equation; thus radial solutions are of the form $C j_{\frac{d}{2}-1}\left( \lambda |x| \right)$, where here $\lambda$ must be a zero of the Bessel function $j_{\frac{d}{2}-1}$ in view of the Dirichlet boundary condition $u|_{S} =   0$. This argument finds only radial solutions and thus leaves room for hypothetical non-symmetric solutions, too, but it is clear that any solution leads to another one, with the same eigenvalue, but now symmetric, by spherical averaging.
Take
$$
\varphi(x) :=\begin{cases}
u(x), \quad &\text{if}~~ x\in B, \\
0, \quad &\text{if}~~ x\not\in B.
\end{cases}
$$
With $q_{\frac{d}{2}-1}$ being the first zero of $j_{\frac{d}{2}-1}$, the function $\varphi$ is obviously nonnegative (and not identically zero as $\varphi(0)=1$). However, it is a general property of elliptic differential operators, that for the first eigenvalue there is only a one-dimensional array of eigenfunctions, and these are the only ones among all eigenfunctions, which are nonnegative \cite[Section 6.5]{Evans}. Therefore, $\varphi\ge 0$ implies that it can only belong to the very first eigenvalue, and moreover that to this eigenvalue there are no other, non-symmetric eigenfunctions (but only scalar multiples of $\varphi$).

From the properties of the Bessel function we further show that
$$
\frac{\partial \vf}{\partial n}\bigg|_S < 0 ,
$$
where $\dfrac{\partial }{\partial n}$ stands for the normal derivative (in the direction of the outward normal). Indeed, it follows from  $\frac{d}{dz} \left( \frac{J_{\alpha}(z)}{z^\alpha} \right) = - \frac{J_{\alpha + 1}(z)}{z^\alpha}$ \cite[(9.1.30)]{AbramowitzStegun} that
$$
j_\alpha'(z) = - \frac{z}{2(\alpha + 1)} j_{\alpha +1}(z),
$$
which yields
$$
\frac{\partial \vf}{\partial n}\bigg|_S = q_{\frac{d}{2}-1} j_{\frac{d}{2}-1}'(z) \bigg|_{z=q_{\frac{d}{2}-1}} =
- \frac{ q_{\frac{d}{2}-1}^2}{d} j_{\frac{d}{2}}(q_{\frac{d}{2}-1}) < 0,
$$
the latter inequality coming from the fact that $q_{\frac{d}{2}} > q_{\frac{d}{2}-1}$, see \cite[(9.5.2)]{AbramowitzStegun}.

To calculate the Fourier transform of $\vf$, we use the eigenfunction property $-\Delta \vf = q_{\frac{d}{2}-1}^2 \vf$ and obtain
$$
\widehat{\vf}(s):=\int_{\R^d} \vf(x) e^{-isx} dx =-\frac{1}{q_{\frac{d}{2}-1}^2} \int_{\R^d} \Delta \vf(x) e^{-isx} dx,
\quad s \in \R^d.
$$
In view of  $\Delta \exp(-isx) = -|s|^2\exp(-isx)$ we find for the Fourier transform of $\vf$ the formula
$$
(|s|^2 - q_{\frac{d}{2}-1}^2) \widehat{\vf}(s) = \int_B \left( e^{-isx} \Delta \vf (x) - \vf(x) \Delta e^{-isx} \right)  dx.
$$
Next the second Green formula $\int_B \left( f \Delta g - g \Delta f \right) dx = \oint_{S} \left( f \frac{\partial g}{\partial n} - g \frac{\partial f}{\partial n} \right) dx$, here applied with $f(x) := \exp(-isx)$ and $g(x) := \vf(x)$ yields, taking into account the nullity of $\vf|_S$
\begin{align*}
(|s|^2 - q_{\frac{d}{2}-1}^2 ) \widehat{\vf}(s) & = \oint_{S} \left( \exp(-isx) \frac{\partial \vf  (x)}{\partial n} - \vf(x) \frac{\partial \exp(-isx)}{\partial n} \right) dx
\\ & =\oint_{S} \exp(-isx) \frac{\partial \vf (x)}{\partial n} dx
= - \frac{ q_{\frac{d}{2}-1}^2  j_{\frac{d}{2}}(q_{\frac{d}{2}-1})}{d} \oint_{S} e^{-isx} dx.
\end{align*}
The last term $\oint_{S} e^{-isx} dx$ can be interpreted as the Fourier transform of the regular, bounded Borel measure $d\nu(x)$, supported on the unit sphere $S$ and absolutely continuous with respect to surface area measure with the density function $1$, so that
$$
\widehat{\vf}(s) = - \frac{ q_{\frac{d}{2}-1}^2  j_{\frac{d}{2}}(q_{\frac{d}{2}-1})}{d} \ \frac{ \widehat{\nu}(s)}{|s|^2 - q_{\frac{d}{2}-1}^2 }.
$$
It is  well-known (see e.g. \cite[Chapter IV, \S 3]{stein-weiss:Euclidean} or \cite[Appendix B.4]{Grafakos})  that
\begin{equation}\label{Sphere-transform}
\widehat{\nu}(s) := \int_{\R^d} e^{-isx} d\nu(x)
= \frac{2 \pi^{\frac{d}{2}}}{\Gamma \left( \frac{d}{2} \right) } j_{\frac{d}{2}-1}(|s|), \quad s \in \R^d.
\end{equation}

We now introduce the new function
\begin{equation}\label{Y:psidef}
\psi(x) := - \oint_{S} \vf(x-u) \frac{\partial \vf (u)}{\partial n} du
=  \frac{ q_{\frac{d}{2}-1}^2  j_{\frac{d}{2}}(q_{\frac{d}{2}-1})}{d}  \  (\vf * d\nu)(x).
\end{equation}
Note that $\varphi$ is supported in $B$ and $d\nu$ is supported in $S$, whence $\psi$ is supported in $2B$. Its Fourier transform is
\begin{align*}
\widehat{\psi}(s) & = \frac{ q_{\frac{d}{2}-1}^2  j_{\frac{d}{2}}(q_{\frac{d}{2}-1})}{d}  \ \widehat{\vf}(s) \widehat{\nu}(s)
= - \left[ \frac{ q_{\frac{d}{2}-1}^2  j_{\frac{d}{2}}(q_{\frac{d}{2}-1})}{d} \right]^2
\frac{ \left[  \widehat{\nu}(s) \right]^2}  { |s|^2 - q_{\frac{d}{2}-1}^2} \\
& = \left[ \frac{\pi^{\frac{d}{2}} q_{\frac{d}{2}-1} j_{\frac{d}{2}} (q_{\frac{d}{2}-1}) } { \Gamma\left( \frac{d}{2} + 1 \right)} \right]^2 \ \frac{j_{\frac{d}{2}-1}^{2}(|s|)}{1 - |s|^2 / q_{\frac{d}{2}-1}^{2}}.
\end{align*}
Finally, we normalize this function to take the value $1$ at the origin and consider
$$
y_d(s) := \left[ \frac{\pi^{\frac{d}{2}} q_{\frac{d}{2}-1} j_{\frac{d}{2}} (q_{\frac{d}{2}-1}) } { \Gamma\left( \frac{d}{2} + 1 \right) } \right]^{-2}  \ \widehat{\psi}(s)  = Y_d(|s|),   \quad s \in \R^d,
$$
where $Y_d$ is given by \eqref{Yudin-function}.
The property (i) of the function $Y_d$ is obvious. To prove the property (ii), note that $\widehat{y}_d$ coincides with $\psi$ up to a constant. The properties in (ii) easily follow from \eqref{Y:psidef}.
 \eop \end{proof}

\begin{lemma}[Gorbachev]\label{lem-h}
There exist two positive constants $0<\de, \kappa$, and a continuous radial positive definite  function $h \in L^1(\RR^d)$ such that  $\widehat{h}$ is compactly supported, $\widehat{h}(0)=0$, $h(0)=1$, and
\begin{equation}\label{h-cond}
h(x) \le -\frac{\kappa}{|x|^{d+1}},\quad |x|\ge 1-\delta.
\end{equation}
\end{lemma}

\begin{proof} (Gorbachev)
Following \cite{gorbachev:L2} (see properties of the function $G_{\alpha}$), consider the function
$$
H(t) :=   \int_{t}^{\infty} s Y_{d+2}(s) ds,
\quad t \ge 0,
$$
where $Y_{d+2}$ is the Yudin function \eqref{Yudin-function}, and the continuous radial function $h_0$ is defined as $h_0(x) := H(|x|)$, $x \in \R^d$.

First we show that for $t$ large enough the estimate
\begin{equation} \label{H-growths}
H(t) \le -\frac{\kappa_1}{t^{d+1}}
\end{equation}
holds true with a certain constant $\kappa_1 > 0$. From the well-known asymptotic relation \cite[(9.2.1)]{AbramowitzStegun} which for real $t$ takes the form
$$
J_{\alpha}(t) = \sqrt{\frac{2}{\pi t}} \cos{\left( t - \frac{1}{2}\alpha \pi - \frac{1}{4}\pi \right)} + O\left( \frac{1}{t} \right), \quad t \to \infty,
$$
it follows that with $A := \frac{1}{\sqrt{\pi}} 2^{\alpha + \frac{1}{2}} \Gamma(\alpha+1)$ and $\beta := \frac{\alpha \pi}{2} + \frac{\pi}{4}$ we have
\begin{equation} \label{Bessel-j-asymt}
j_{\alpha}(t) = A \frac{1}{t^{\alpha + \frac{1}{2}}} \cos \left( t - \beta \right) + O \left( \frac{1}{t^{\alpha + 1}}\right), \quad t \to \infty.
\end{equation}
Taking into account that $\frac{1}{1-s^2} =  -\frac{1}{s^2} + O \left( \frac{1}{s^4}\right)$, $s \to \infty$, we obtain with a little trigonometry---i.e. applying $\cos^2(\gamma)=\frac12 (1+\cos(2\gamma))$---the formula
$$
s Y_{d+2}(s) =  \frac{s j_{\frac{d}{2}}^2(s)}{1 - {s^2}/{q_{\frac{d}{2}}^2}}
= -\frac{1}{2} A^2 q_{\frac{d}{2}}^2 ~ \frac{1}{s^{d+2}} \bigg(1+\cos( 2s-2\beta)\bigg) +  O \left( \frac{1}{s^{d + \frac{5}{2}}}\right), \quad s \to \infty.
$$
Given a large $t$, integrating the above asymptotic formula we are led to
\begin{align*}
H(t) & =  - \frac{1}{2} A^2 q_{\frac{d}{2}}^2   ~\left( \frac{1}{(d+1)t^{d+1}} + \int_t^\infty \frac{\cos(2 s -2\beta)}{s^{d+2}} ds \right)
+ O \left( \frac{1}{t^{d + \frac32}} \right)
\\ & =  - \frac{A^2  q_{\frac{d}{2}}^2}{(2d+2)} ~ \frac{1}{t^{d+1}}
+ O \left( \frac{1}{t^{d + 2}} \right)
+ O \left( \frac{1}{t^{d + \frac32}} \right)
\le - \frac{\kappa_1}{t^{d +1}}
\end{align*}
with a suitable constant $\kappa_1 > 0$ for $t$ large enough. Note on passing that we found $H(t)=O(t^{-(d+1)})$ and therefore also $h_0\in L^1(\RR^d)$.

Next we will investigate the Fourier transform of $h_0$. It is well-known that the Fourier transform of a radial function $f(x) = F(|x|)$ is again radial, i.e. $\widehat{f}(s) = K(|s|)$, and $F$ and $K$ are connected by the Fourier-Bessel, or Hankel, transform:
\begin{equation} \label{Hankel-connection}
\widehat{f}(s) = K(|s|) = (2\pi)^{\frac{d}{2}} ({\cal H}_{\frac{d}{2}-1} F)(|s|),
\end{equation}
where the Fourier-Bessel transform is defined for $\alpha \ge -\frac{1}{2}$ by the formula
$$
({\cal H}_\alpha F)(s) := \frac{1}{2^\alpha \Gamma(\alpha + 1)} \int_0^\infty F(u) j_\alpha(su) u^{2\alpha + 1} du, \quad s \ge 0.
$$
Note that \eqref{Hankel-connection} follows easily from \eqref{Sphere-transform} by changing to polar coordinates (see e.g., \cite[Chapter IV, \S 3]{stein-weiss:Euclidean}, \cite[Appendix B.5]{Grafakos}).
Using the identity  $\frac{d}{dz} \left( z^\alpha J_{\alpha}(z) \right) = z^\alpha J_{\alpha - 1}(z)$ \cite[(9.1.30)]{AbramowitzStegun}, we arrive at
\begin{equation} \label{j-derivative}
\frac{d}{du} \left( u^{2\alpha} j_{\alpha}(su) \right) = 2\alpha u^{2\alpha - 1} j_{\alpha-1}(su).
\end{equation}

The latter identity and integration by parts yield (taking $\alpha=d/2$ in the above)
\begin{align*}
({\cal H}_{\frac{d}{2}-1} H)(s)
& = \frac{1}{2^{\frac{d}{2}-1} \Gamma \left( \frac{d}{2} \right)}  \int_0^\infty H(u) j_{\frac{d}{2}-1}(su) u^{d-1} du \\
& = \frac{1}{2^{\frac{d}{2}-1} \Gamma \left( \frac{d}{2} \right) d} \int_0^\infty H(u) d \left(  j_{\frac{d}{2}}(su) u^{d} \right) \\
& = \frac{1}{2^{\frac{d}{2}} \Gamma \left( \frac{d}{2} + 1 \right)} \left( \left. H(u) j_{\frac{d}{2}}(su) u^{d} \right|_0^\infty
-  \int_0^\infty H'(u) j_{\frac{d}{2}}(su) u^{d} du \right)\\
& =  \frac{1}{2^{\frac{d}{2}} \Gamma \left( \frac{d}{2} + 1 \right)} \int_0^\infty Y_{d+2}(u) j_{\frac{d}{2}}(su) u^{d+1} du \\
& = ({\cal H}_{\frac{d}{2}} Y_{d+2})(s).
\end{align*}
In the calculation above we used the fact  that the substitution $\left. H(u) j_{\frac{d}{2}}(su) u^{d} \right|_0^\infty$ vanishes (see \eqref{H-growths} for $u \to \infty$), and that $H'(u) = -u Y_{d+2}(u)$. Thus,
\begin{equation} \label{H-Y-connection}
{\cal H}_{\frac{d}{2}-1} H = {\cal H}_{\frac{d}{2}} Y_{d+2}.
\end{equation}
Reflecting back to Lemma~\ref{lem-Y}, applied in dimension $d+2$, we see that the function $y_{d+2}(x) = Y_{d+2}(|x|)$, $x \in \R^{d+2}$, satisfies the properties  $\widehat{y}_{d+2} \ge 0$, $\supp{ \widehat{y}_{d+2} } \Subset 2B^{d+2}$, $\widehat{y}_{d+2}(0) = 0$. So according to \eqref{Hankel-connection} (still applied in dimension $d+2$), we therefore have ${\cal H}_{\frac{d}{2}} Y_{d+2} \ge 0$, $\supp{ {\cal H}_{\frac{d}{2}} Y_{d+2} } \Subset [0,2]$ and $\left( {\cal H}_{\frac{d}{2}} Y_{d+2} \right)(0) = 0$. Whence \eqref{H-Y-connection} furnishes that the same is true for ${\cal H}_{\frac{d}{2}-1} H$, so using \eqref{Hankel-connection} again---but now in dimension $d$---we finally obtain in exactly dimension $d$ the properties
$$
\widehat{h}_0 \ge 0, \quad \supp{\widehat{h}_0} \Subset 2B, \quad  \widehat{ h}_0 (0) = 0.
$$
Note that $h_0(0) > 0$ since $h_0$ is positive definite and not identically zero. Now consider the function
$$
h(x) := \frac{ h_0 \left( q_{\frac{d}{2}} x \right) } {h_0(0)}, \quad x \in \R^d.
$$
Clearly $h$ is an integrable continuous function with $\widehat{h} \ge 0$, $ \supp{ \widehat{h}} \Subset 2 q_{\frac{d}{2}}B$, $\widehat{ h} (0) = 0$  and $h(0)=1$.

It remains to show \eqref{h-cond}.  It follows from the consideration above that
$$
h_0(x) \le -  \frac{\kappa_2}{|x|^{d +1}}
$$
with some constant $\kappa_2 > 0$ for $x$ large enough. It is also easy to see that $H(x)$ is negative increasing for $x \ge q_{\frac{d}{2}} $. So, also $H ( q_{\frac{d}{2}})<0$. Thus by continuity, $H(x)<0$ in a small neighborhood of $q_{\frac{d}{2}}$, too. Therefore,  with a sufficiently small value of $\de>0$, and with a suitable constant $\kappa>0$, property \eqref{h-cond} is satisfied, too.
 \eop \end{proof}

\noindent {\bf Continuation of the Proof of Proposition \ref{prop:Gorbachev}.}
Given $\varepsilon > 0$,  let $g \in \FV (B, \RR^d)$ be a function such that $\int_{\RR^d} g > (1 - \varepsilon) \DD(B)$. Consider its perturbation $f(x) := \frac{1}{1 + \varepsilon} (g(x) + \varepsilon h(x))$, where $h$ is the function from Lemma~\ref{lem-h}. It is easy to see that $f \in \FV (B, \RR^d)$. For its integral we have $\int_{\RR^d} f  > \frac{1 - \varepsilon}{1 + \varepsilon} \DD(B)$. For $|x| \ge 1$ we have, since $g(x) \le 0$,
$$
f(x) \le \frac{\varepsilon}{1 + \varepsilon} h(x) \le  -\frac{\kappa \varepsilon}{(1 + \varepsilon) |x|^{d+1}} \le  -\frac{\kappa \varepsilon}{2 (1 + \varepsilon) |x|^{d+1}},
$$
where $\kappa$ is the constant from  Lemma~\ref{lem-h}.
Since $g(x) \le 0$ for $|x| = 1$ and $g$ is continuous (and thus uniformly continuous in each compact set containing the ball $B$), there exists a small $\delta > 0$ such that $g(x) \le \frac{\kappa \varepsilon}{2} \le \frac{\kappa \varepsilon}{2|x|^{d+1}}$ for $1 - \delta \le |x| \le 1$. Taking $\delta > 0$ so small that also \eqref{h-cond} is fulfilled, we obtain
$$
f(x) \le -\frac{\kappa \varepsilon}{2 (1 + \varepsilon) |x|^{d+1}},\quad |x|\ge 1-\delta.
$$
Replacing $\delta$ by $\min{\{ \varepsilon, \delta \}}$, we finally arrive at the estimate
\begin{equation}\label{f-estim}
f(x) \le -\frac{\kappa \delta}{4 |x|^{d+1}},\quad |x|\ge 1-\delta.
\end{equation}

Let $\eta$ be a radial non-negative positive definite Schwartz function such that $\eta(0)=1$ and $\supp{\eta} \subset  B$.  By the properties of Schwartz functions, there is a constant $C_{\eta} > 0$ such that
$$
\widehat{\eta}(x)\le \frac{C_{\eta}}{|x|^{2d+1}},\quad |x|>0.
$$
For $R > 0$, consider $\varphi_{R}(x) := R^{d}\widehat{\eta}(Rx)$. Clearly,  $\varphi_{R}$ is a radial non-negative positive definite Schwartz function such that $\widehat{\varphi}_{R}(y)=\eta\left( \frac{y}{R} \right)$, $\widehat{\varphi}_{R}(0)=1$, $\supp\widehat{\varphi}_{R}\subset RB$, and
\begin{equation}\label{phi-estim}
\varphi_{R}(x)\le \frac{C_{\eta}}{R^{d+1}|x|^{2d+1}},\quad |x|>0.
\end{equation}

Next take the convolution
$f_{R} := f*\varphi_{R}$. Then $\widehat{f}_{R}=\widehat{f}\,\widehat{\varphi}_{R}$.
It is clear that $f_{R}$ belongs to $C(\mathbb{R}^{d})\cap L^{1}(\mathbb{R}^{d})$
and is a positive definite function with $\supp{\widehat{f}_R} \subset RB$. Further on,
$$
f_{R}(0)=\int_{\mathbb{R}^{d}}f(t)\varphi_{R}(t)\,dt
\le f(0) \int_{\mathbb{R}^{d}}\varphi_{R}(t)\,dt = f(0) = 1
$$
(and, on the other hand, $f_R(0) > 0$ since $f_R$ is positive definite and not identically zero). Thus, the function $F_R := \frac{1}{f_R(0)} f_R$ fulfills all the properties defining the class $\GG_0(B)$ if we show that $\supp{(F_R)_+} \subset B$. We will give a proof of this property a couple of lines  below. With this property at hand, we have $F_R \in  \GG_0(B)$.  For the  integral of the function $F_R$ we have
$$
\int_{\RR^d} F_R = \widehat{F_R}(0) = \frac1{f_R(0)} \widehat{f}(0) \widehat{\varphi}_{R}(0) = \frac1{f_R(0)} \widehat{f}(0) \ge \widehat{f}(0) = \int_{\RR^d} f  > \frac{1 - \varepsilon}{1 + \varepsilon} \DD(B).
$$
Since $\varepsilon > 0$ can be taken arbitrarily small, it follows that $\DD(B) \le  \DD^\GG_0(B)$ .

\bigskip
Thus, to complete the proof of the proposition we need to show that, for a sufficiently large $R$ that depend on $d$, $\delta$, and on the particular choice of $\eta$,  we have  $f_{R}(x) \le 0$, $|x|\ge 1$. Consider $x$ with $|x|\ge 1$. We take $R$ in the form $R = r \delta^{-2}$, where the constant $r > 0$ will be chosen later. In what follows $C$ and $C'$  will denote positive constants that may depend on $d$  and  $\eta$ but do not depend on $f$, $\delta$ and $x$; these constants may be different at different occasions.   We have
\begin{align*}
f_{R}(x)& =\int_{\mathbb{R}^{d}} f(t)\varphi_{R}(x-t) \,dt
\\
& = \left( \int_{|t|<(1-\delta)|x|} + \int_{(1-\delta)|x|\le |t|\le 2|x|} + \int_{|t|>2|x|} \right) f(t)\varphi_{R}(x-t) \,dt
=:  I_{1} + I_{2} + I_{3}.
\end{align*}
If $|t|>2|x|\ge 2$ then $f(t)\le 0$, and thus
$$
I_{3} \le 0.
$$
Since  $f(t) \le f(0) = 1$, we have
$$
I_{1} \le  \int_{|t| < (1-\delta)|x|}\varphi_{R}(x-t)\,dt.
$$
If $|t| < (1-\delta)|x|$, then $|x-t|\ge |x|-|t| > \delta |x|$. This and \eqref{phi-estim} imply
\begin{equation}\label{J1-estim}
I_1
\le  \int_{|t| <  (1-\delta)|x|} \varphi_{R}(x-t)\, dt
\le \frac{C_{\eta}}{R^{d+1}(\delta |x|)^{2d+1}}\int_{|t| < |x|}dt
= \frac{C \delta}{r^{d+1}|x|^{d+1}}.
\end{equation}
If $(1-\delta)|x| \le |t|\le 2|x|$, then also $1-\delta \le |t|$. Hence, by \eqref{f-estim},
$$
I_{2}\le -\frac{C\delta}{|x|^{d+1}}\int_{(1-\delta)|x|\le |t|\le 2|x|}\varphi_{R}(x-t)\,dt.
$$
Further, the ball of radius $\de |x| \ge \de$ around $x$ lies within the domain of integration here, so we get taking into account also \eqref{phi-estim} the estimate
\begin{align*}
\int_{(1-\delta)|x|\le |t|\le 2|x|}\varphi_{R}(x-t)\,dt & \ge \int_{\de B} \varphi_{R}(s)\,ds = 1- \int_{|s|\ge \de} \varphi_{R}(s)\,ds
\\ & \ge 1- \int_{|s|\ge \de} \frac{C_{\eta}}{R^{d+1}|s|^{2d+1}} \,ds
\\ & = 1- \frac{C} {R^{d+1} \de^{d+1}} = 1- \frac{C \de^{d+1}}{r^{d+1}} \ge \frac12
\end{align*}
for large enough $r$. Whence
$$
I_{2} \le -\frac{C'\delta}{|x|^{d+1}}.
$$
Summarizing, we obtain the estimate
$$
f_{R}(x)
\le \frac{C\delta}{r^{d+1}|x|^{d+1}} - \frac{C' \delta}{|x|^{d+1}}
= \frac{\delta}{|x|^{d+1}} \left( \frac{C}{r^{d+1}} -  C' \right),
$$
and finally $f_R(x) \le 0$ if we choose  $r$ so large  that $\frac{C}{r^{d+1}} < C'$.
 \eop \end{proof}

As it is only tangentially touched in the literature (for example, for the particular case of $W=B$ and $Q=r_dB$ using special considerations \cite{gorbachev:sphere}), let us note the following.

\begin{proposition}\label{pr:existence-extremal}
If $W\subset \RR^d$ is closed and it has finite Lebesgue measure $|W|<\infty$ and if $Q\Subset \RR^d$ is compact, then there exists some extremal function $f\in \GG(W,Q)$ with $\int_{\RR^d} f = \DD^\GG(W,Q)$.
\end{proposition}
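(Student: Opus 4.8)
The plan is the direct method of the calculus of variations, carried out on the Fourier side where the normalization $f(0)=1$ makes the competitors uniformly bounded, after which the hypothesis $|W|<\infty$ is used to push the weak limit back into the class with the full extremal value. For $f\in\GG(W,Q)$ write $g:=\widehat f$; since $f\in L^1(\RR^d)$, $g$ is a continuous function, $g\ge 0$ (because $f\gg 0$), $\supp g\subset Q$, $\int_{\RR^d}g(\xi)\,d\xi=f(0)=1$, and $\int_{\RR^d}f=g(0)$. In particular $\DD^\GG(W,Q)<\infty$, since for every such $f$ one has $\int_{\RR^d}f=\int f_+-\int f_-\le\int_W f_+\le|W|\,\|f\|_\infty=|W|$, using $\|f\|_\infty=f(0)=1$ for positive definite $f$. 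If $\GG(W,Q)=\emptyset$ there is nothing to prove, so assume it is nonempty (which forces $0\in W$).

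Next I would take a maximizing sequence $f_n\in\GG(W,Q)$ with $\int f_n\to D:=\DD^\GG(W,Q)$, and set $g_n:=\widehat{f_n}$. The measures $g_n\,d\xi$ are probability measures carried by the fixed compact set $Q$, so by weak-$*$ sequential compactness (Banach--Alaoglu together with separability of $C(Q)$) a subsequence converges weak-$*$ to a finite positive measure $\mu$ with $\supp\mu\subset Q$; testing against any continuous function that is $\equiv 1$ on $Q$ gives $\mu(\RR^d)=\lim\int g_n=1$. Put $f(x):=\int_Q e^{2\pi i x\xi}\,d\mu(\xi)$, the inverse Fourier transform of $\mu$. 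Then $f$ is continuous and $f\gg 0$ (Fourier transform of a finite positive measure), $f(0)=\mu(\RR^d)=1$, and for each fixed $x$ the function $\xi\mapsto e^{2\pi i x\xi}$ is continuous on $Q$, hence $f_n(x)\to f(x)$ pointwise. Since $\supp(f_n)_+\subset W$ means $f_n\le 0$ on the open set $\RR^d\setminus W$, the pointwise limit satisfies $f\le 0$ there too, and as $W$ is closed this yields $\supp f_+\subset W$. (Once $f\in L^1$ is established, Fourier uniqueness identifies $\widehat f$ with the then necessarily absolutely continuous measure $\mu$, so $\supp\widehat f\subset Q$.)

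It remains to verify $f\in L^1(\RR^d)$ and $\int f=D$, which is the only real obstacle and the point where $|W|<\infty$ is indispensable. From $0\le f_n^+\le 1$ on $W$, $f_n^+\to f^+$ pointwise, and $\chi_W\in L^1$, dominated convergence gives $\int f_n^+=\int_W f_n^+\to\int_W f^+=\int f^+=:P<\infty$. Hence $\int f_n^-=\int f_n^+-\int f_n\to P-D$, and Fatou's lemma yields $\int f^-\le\liminf\int f_n^-=P-D<\infty$; thus $f\in L^1(\RR^d)$ and therefore $f\in\GG(W,Q)$. Finally $\int f=\int f^+-\int f^-\ge P-(P-D)=D$, while $\int f\le D$ by definition of the supremum, so $\int f=D$ and $f$ is the desired extremal function. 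I expect the delicate part to be exactly this last step: weak-$*$ convergence of the $g_n$ controls only $\int f^+$ directly, and a priori the mass of $g_n$ could concentrate near the origin (equivalently, the $f_n$ could grow large negative tails), so one must convert the finiteness of $D$ together with $|W|<\infty$ into an $L^1$-bound for $f^-$ that simultaneously shows no value of the supremum is lost in the limit.
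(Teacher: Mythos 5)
Your proof is correct, and it reaches the goal by a route that differs from the paper's in its two technical pillars. For compactness, the paper works on the ``physical'' side: it first proves that $\GG(W,Q)$ is equicontinuous (via Fourier inversion, $\omega(f;h)\le Rh$ when $Q\Subset RB$), extracts a locally uniformly convergent subsequence by Arzel\`a--Ascoli and diagonalization, and then needs a second, separate compactness argument (Banach--Alaoglu in $L^\infty=(L^1)^*$, Plancherel, and test functions whose transforms live in small balls off $Q$) just to verify $\supp\widehat f\subset Q$ for the limit. You instead pass immediately to the spectral side: the normalization $f_n(0)=1$ makes $\widehat{f_n}\,d\xi$ probability measures on the fixed compact $Q$, weak-$*$ compactness of measures on $Q$ gives a limit $\mu$, pointwise convergence of $f_n$ falls out by testing against $e^{2\pi i x\xi}$, and the condition $\supp\widehat f\subset Q$ comes essentially for free from distributional Fourier uniqueness once $f\in L^1$ is known (your multiplication-formula argument, which you state tersely but which is sound). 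This makes the support-of-the-transform verification much lighter than the paper's, at the cost of having to define the candidate limit as $\check\mu$ rather than as a uniform limit; the paper's Arzel\`a--Ascoli route yields locally uniform convergence, which is stronger than you need but is not actually exploited beyond pointwise convergence. The decisive bookkeeping where $|W|<\infty$ enters is identical in both proofs: dominated convergence for the positive parts (dominated by $\chi_W$) and Fatou for the negative parts, which together give $f\in L^1$ and prevent any loss of the extremal value in the limit; your closing inequality $\int f\ge P-(P-D)=D$ is the same computation as the paper's final display.
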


\begin{proof}
By definition of sup, there are functions $f_n \in \GG(W,Q)$ with $\int_{\RR^d} f_n > \DD^\GG (W,Q)-1/n$.
Further, the family of functions $\GG(W,Q)$ is \emph{equicontinuous}. Indeed, let $R>0$ be such that $Q\Subset RB$. Then for any $f\in \GG(W,Q)$ by Fourier inversion and using $\widehat{f}\ge 0$ and $\widehat{f}(t)=0$ for $t\not\in RB$, we get
\begin{align*}
\left| f(x) - f(y)\right| & =  \frac{1}{(2\pi)^d}  \left| \int_{\RR^d}  \left( e^{ixt}-e^{iyt} \right) \widehat{f}(t) dt \right|
\le  \frac{1}{(2\pi)^d}  \int_{Q} \left|2 \sin\left(\frac{(x-y)t}{2}\right) \right| \widehat{f}(t) dt  \\
& \le \frac{1}{(2\pi)^d}  \max_{t\in RB} |(x-y)t| \cdot \int_{RB} \widehat{f}(t) dt = |x-y|R f(0)= |x-y|R.
\end{align*}
Therefore, for the modulus of continuity of $f\in \GG(W,Q)$ we always have uniformly $\omega(f;h) \le R h ~(h>0)$.

It is then immediate from the celebrated Arzel\`a-Ascoli Theorem that for any $K\Subset \RR^d$ the family of restrictions $g|_K$ of functions $g$ from $\GG(W,Q)$ constitute a precompact set in $C(K)$ (equipped with the maximum norm). Taking say $K_n:=nB$, a standard diagonalization argument furnishes a subsequence of $(f_n)$ converging locally uniformly
to some function $f \in C(\RR^d)$. We can assume that the subsequence itself is $(f_n)$. Then also $\lim f_n = f$ in the pointwise sense, so in view of $f_n\gg 0$ also $f\gg 0$ follows, c.f. \eqref{posdefdef}. Further, $\supp f_{+} \subset \overline{W}=W$ is obvious as $W$ is closed and it also follows that $f(0)=1$ and $|f|\le 1$. Let us write $f=f_+ - f_-$ and similarly $f_n=(f_n)_+-(f_n)_-$. Then we also have $(f_n)_{\pm}\to f_{\pm}$ pointwise. For the negative parts we may apply Fatou's Lemma:
\begin{equation}\label{eq:fnminus}
\int_{\RR^d}  f_- \le \liminf_{n\to\infty} \int_{\RR^d} (f_n)_-.
\end{equation}
For the positive parts note that $(f_n)_+$ and $f_+$ are all supported in $W$, and $|(f_n)_+|\le (f_n)_+(0)=1$, all the functions $f_n$ belonging to $\GG(W,Q)$.
That is, $(f_n)_+ \le \chi_W$, the indicator function of $W$, which is integrable because $|W|<\infty$. Therefore, Lebesgue's Dominated Convergence Theorem yields
\begin{equation}\label{eq:fnplus}
\int_{\RR^d}  f_+ = \lim_{n\to\infty} \int_{\RR^d} (f_n)_+.
\end{equation}
Note that then $\int_{\RR^d} |f|=\int_{\RR^d} f_+ + \int_{\RR^d} f_-  \le \lim_{n\to \infty} \int_{\RR^d} (f_n)_+ ~+~ \liminf_{n\to \infty} \int_{\RR^d} (f_n)_- \le 2 \lim_{n\to \infty} \int_{\RR^d} (f_n)_+ \le 2|W|$ because $\int_{\RR^d} (f_n)_- = \int_{\RR^d} \left( (f_n)_+ - f_n \right) \le \int_{\RR^d} (f_n)_+$ for each $n$, for $\int_{\RR^d} f_n  \ge 0$ in view of $f_n\gg 0$. Therefore, we have also proved $f \in L^1(\RR^d)$, that is, also $f\in C(\RR^d) \cap L^1(\RR^d) \cap L^\infty(\RR^d)$, whence it is also in $L^2(\RR^d)$. In particular, $\widehat{f}$ exists, is continuous, and belongs to $L^2(\RR^d)$.

Now we claim that $f \in \GG(W,Q)$. Almost all requirements of the definition \eqref{eq:Gorbachevclass} were already proved;
to demonstrate $f \in \GG(W,Q)$ it remains only to show  $\supp \widehat{f} \subset Q$.

Before proving this, let us note that all the functions $f_n$ and $f$ belong to the unit ball of $L^\infty(\RR^d)$, which is the dual space of $L^1(\RR^d)$, the space $L^1(\RR^d)$ being separable. By the Banach-Alaoglu Theorem the unit ball in a dual space is weak-star sequentially compact and so for the sequence $(f_n)$ there is a weak-star convergent subsequence---which we may assume to be the very $(f_n)$ here---resulting in $\langle f_n,H \rangle \to \langle f,H \rangle$ for any fixed $H\in L^1(\RR^d)$, the inner product standing for $\langle f,H \rangle:=\int f\overline{H}$, as usual.
(That the limit function in the weak-star sense cannot be else than $f$ itself follows from locally uniform convergence coupled with the availability as a particular choice of $H$ of characteristic functions of any compacts.) We will exploit this weak-star convergence in the following argument.

Let us take any point $z \not\in Q$, and any small enough ball $\delta B$ around zero such that $z + 2 \delta B \subset \RR^d\setminus Q$. Then take $\psi:= \widehat{f} \cdot \theta $ with $\theta(t):=(\chi_{\delta B} * \chi_{\delta B}) (t-z)$. Consider $h(x):=\check{\theta}(x) =e^{izx} (\chi_ {\delta B}* \chi_{\delta B})\check{}(x)  = \frac{1}{(2\pi)^d} e^{izx}  \widehat{\chi}_{\delta B}^2(x) = \frac{\delta^{2d}}{(2\pi)^d} e^{izx} \widehat{\chi}_B^2(\delta x)$.  Here $\ft{\chi}_B$ is the Fourier transform of the characteristic function of $B$, which is well-known  and directly follows from \eqref{Hankel-connection} and \eqref{j-derivative} (see e.g. \cite[Appendix B.5]{Grafakos}):
\begin{equation} \label{Fourier-char-ball}
\ft{\chi}_B(x) = \frac{\pi^{\frac{d}{2}}}{\Gamma\left( \frac{d}{2} + 1 \right)} j_{\frac{d}{2}}(|x|).
\end{equation}
Taking into account \eqref{Bessel-j-asymt}, we see that $h$ is an integrable function on $\RR^d$. Let $g := f  * h$.  By Fourier inversion we obtain $g =\check{\psi}$.
Let us record here that $\supp \theta =z + 2\delta B$ is disjoint from $Q$ by construction.

Fixing a point $y\in \RR^d$ we thus have $g(y) = \int_{\RR^d} f(x) h(y-x) dx$. For the fixed function $H:=\overline{h_y}:=\overline{h(y-\cdot)} \in L^1(\RR^d)$ we have by consideration above $\langle f_n,H \rangle \to \langle f,H \rangle$.  So invoking also the Plancherel formula
we get
\begin{align*}
& g(y) = \lim_{n\to \infty}  \langle f_n,H \rangle =  \frac{1}{(2\pi)^d}  \lim_{n\to\infty} \langle \widehat{f_n}, \widehat{\overline{h_y}} \rangle = \frac{1}{(2\pi)^d}  \lim_{n\to \infty} \int_{\RR^d} \widehat{f_n} \overline{\widehat{\overline{h_y}}} \\
& = \frac{1}{(2\pi)^d}   \lim_{n\to \infty} \int_{\RR^d} \widehat{f_n}(t)  e^{iyt} \theta (t) dt = 0
\end{align*}
for $\widehat{f_n}$ is supported in $Q$, not intersecting with the support of $\theta$. As a result, $g \equiv 0$. In view of the uniqueness of Fourier transform we thus found $\psi \equiv 0$. That is, $\widehat{f} \cdot \theta \equiv 0$, which means that outside $Q$ the continuous  function $\widehat{f}$ must be zero. Therefore, $\supp \widehat{f} \subset Q$ and $f \in \GG(W,Q)$.

Subtracting \eqref{eq:fnminus} from \eqref{eq:fnplus} and using the definition of the extremal constant, we get
$$
\DD^\GG(W,Q) \ge \int_{\RR^d} f \ge \limsup_{n\to \infty} \int_{\RR^d} \left((f_n)_+-(f_n)_-\right) = \limsup_{n\to \infty} \int_{\RR^d} f_n \ge \DD^\GG(W,Q),
$$
whence we have equality everywhere here and $f$ is thus an extremal function.
 \eop \end{proof}

\begin{remark}\label{rem:LCA-existence}
While our paper waited printing, a nice and nontrivial generalization of Proposition \ref{pr:existence-extremal} to general locally compact Abelian groups appeared in \cite{Zsuzsi-Marcell}.
\end{remark}

Lastly, let us turn to the works of Cohn-Elkies and Viazovska. They use the function class
\begin{eqnarray*}
\EK (W_+, W_-):=\left\{ f \in \FF_1(W_+, W_-)  \right.  :  |f(x)|=O((1+|x|)^{-\kappa})   \textrm{~and~} \\
|\widehat{f}(x)| = O((1+|x|)^{-\kappa}) \left. \right\}
\end{eqnarray*}
with some (arbitrarily given) $\kappa:=d+\de>d$ and $W_+, W_- \subset \RR^d$ chosen to be $W_+:=B$ and $W_-:=\RR^d$. Their extremal constant is thus $\DE(B)$, where in general $\DE (W_+) := \CEK (W_+,\RR^d)$ and
$$
\CEK (W_+, W_-):= \sup \left\{ \int_{\RR^d} f : f\in \EK(W_+, W_-) \right\} \qquad (W_+, W_- \subset \RR^d).
$$
Already there is a dependence on a parameter $\kappa$ or $\delta$ here---but we do not discuss directly that for different parameters the constants $\CEK (W_+, W_-)$ coincide. Instead, we prove

\begin{proposition}
 For any parameter value $\kappa >d$ and for any open sets $W_+, W_- \subset \RR^d$, we have for the extremal constants that $\CCC(W_+, W_-)=\CEK(W_+, W_-)$. In particular, $\DD(W_+)=\DE(W_+)$, where $\DE(W_+)=\DEK(W_+)$ is in fact independent from the choice of the parameter~$\kappa$.
\end{proposition}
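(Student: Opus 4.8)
The plan is to prove the two inequalities $\CEK(W_+,W_-)\le\CCC(W_+,W_-)$ and $\CCC(W_+,W_-)\le\CEK(W_+,W_-)$ separately; since $\CCC(W_+,W_-)$ carries no reference to $\kappa$, the claimed $\kappa$-independence of $\CEK(W_+,W_-)$ (and, taking $W_-=\RR^d$, of $\DE(W_+)=\DEK(W_+)$) will come for free once equality is established for every $\kappa>d$. The first inequality is immediate: by definition $\EK(W_+,W_-)\subset\FF_1(W_+,W_-)$, so $\CEK(W_+,W_-)\le\CCC_1(W_+,W_-)$, and $\CCC_1(W_+,W_-)=\CCC(W_+,W_-)$ by \eqref{equiv-arb-set} (equivalently Theorem~\ref{equivalence-theorem}). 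We may also dispose of the trivial case $0\notin W_+$, in which $f(0)=1>0$ forces $0\in\supp f_+\subset W_+$, so both classes are empty and both constants vanish; thus assume $0\in W_+$.

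For the reverse inequality the idea is to start from a near-extremal \emph{compactly supported} function and mollify it so as to land inside $C_c^\infty(\RR^d)$, since any $C_c^\infty$ function satisfying the conditions of $\FF_1(W_+,W_-)$ automatically lies in $\EK(W_+,W_-)$ (its Fourier transform decays faster than any power, and the function itself vanishes for large $|x|$). Concretely, given $\ve>0$ I would invoke Theorem~\ref{equivalence-theorem} to pick $f\in\FF(W_+,W_-)$ with $\int_{\RR^d}f>\CCC(W_+,W_-)-\ve$; so $f$ is continuous, positive definite, compactly supported, $f(0)=1$, $\supp f_\pm\Subset W_\pm$. For small $\rho>0$ I would then take a \emph{nonnegative} bump $\phi_\rho\in C_c^\infty(\RR^d)$, $\phi_\rho\ge0$, $\phi_\rho\not\equiv0$, $\supp\phi_\rho\subset\frac{\rho}{2}\overline{B}$, and set $\psi_\rho:=\bigl(\int\phi_\rho\bigr)^{-2}\,\phi_\rho*\widetilde{\phi_\rho}$: this kernel is smooth, compactly supported in $\rho\overline{B}$, nonnegative (a convolution of nonnegative functions), positive definite (its Fourier transform is proportional to $|\widehat{\phi_\rho}|^2\ge0$), and normalized so that $\int\psi_\rho=1$, hence an approximate identity as $\rho\to0$.

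Setting $g_\rho:=f*\psi_\rho$, one has $g_\rho\in C_c^\infty(\RR^d)$ (smoothness from $\psi_\rho$, compact support from $f$); $g_\rho$ is positive definite since $\widehat{g_\rho}=\widehat f\,\widehat{\psi_\rho}\ge0$ with $g_\rho\in C(\RR^d)\cap L^1(\RR^d)$ (cf.\ Bochner's theorem); $g_\rho(0)=\int f(t)\psi_\rho(-t)\,dt\to f(0)=1$; and $\int_{\RR^d}g_\rho=\widehat{g_\rho}(0)=\widehat f(0)\,\widehat{\psi_\rho}(0)=\int_{\RR^d}f$. For $\rho$ small $g_\rho(0)>0$, so $F_\rho:=g_\rho/g_\rho(0)$ is well defined, lies in $C_c^\infty(\RR^d)$, is positive definite, has $F_\rho(0)=1$, and satisfies $\int_{\RR^d}F_\rho=\int_{\RR^d}f\,/\,g_\rho(0)\to\int_{\RR^d}f>\CCC(W_+,W_-)-\ve$. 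Being $C^\infty$ with compact support, $F_\rho$ meets the two decay requirements in the definition of $\EK$ for every $\kappa>d$.

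The one genuinely delicate step — and the reason for insisting on a nonnegative kernel of small support — is the propagation of the support conditions. If $x\notin\supp f_++\rho\overline{B}$ then $(x-\rho\overline{B})\cap\supp f_+=\emptyset$, so $f\le0$ on $x-\rho\overline{B}\supset x-\supp\psi_\rho$, whence $g_\rho(x)=\int f(t)\psi_\rho(x-t)\,dt\le0$ because $\psi_\rho\ge0$; therefore $\supp(F_\rho)_+=\supp(g_\rho)_+\subset\supp f_++\rho\overline{B}$, and the same argument with signs reversed gives $\supp(F_\rho)_-\subset\supp f_-+\rho\overline{B}$. Since $\supp f_\pm$ are compact subsets of the open sets $W_\pm$, for all sufficiently small $\rho$ we have $\supp f_\pm+\rho\overline{B}\Subset W_\pm$, so $F_\rho\in\FF_1(W_+,W_-)$ and hence $F_\rho\in\EK(W_+,W_-)$. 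Then $\CEK(W_+,W_-)\ge\int_{\RR^d}F_\rho>\CCC(W_+,W_-)-\ve$ for small $\rho$, and $\ve\to0$ completes the argument. I expect this support-propagation to be the only real obstacle; everything else is bookkeeping. It is worth noting that this is precisely where the present proposition is much easier than Proposition~\ref{prop:Gorbachev}: there the Fourier transform is required to have \emph{compact} support, which is incompatible with $f$ itself being compactly supported, forcing the delicate Yudin–Gorbachev constructions; here only polynomial decay of $\widehat f$ is demanded, so one can stay safely inside $C_c^\infty(\RR^d)$.
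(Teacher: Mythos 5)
Your proof is correct and takes essentially the same route as the paper: both get the easy inclusion from $\EK(W_+,W_-)\subset\FF_1(W_+,W_-)$ plus \eqref{equiv-arb-set}, and for the converse both convolve a near-extremal compactly supported $f\in\FF(W_+,W_-)$ with a small-support, nonnegative, positive definite, normalized kernel, use the kernel's nonnegativity to propagate the sign/support conditions into $W_\pm$, renormalize at the origin, and check the decay requirements. The only difference is the choice of kernel --- your $C_c^\infty$ bump autocorrelation makes $\widehat{F_\rho}$ Schwartz-decaying and so covers every $\kappa>d$ at once, whereas the paper uses a scaled $2n$-fold convolution power of $\chi_B$ with $n$ tuned to $\kappa$; your explicit handling of $\supp g_-\subset\supp f_-+\rho\overline{B}\Subset W_-$ is in fact slightly more careful than the paper's corresponding step, which only records $\supp g_-\Subset\RR^d$.
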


\begin{proof}
Again we only need to consider the case when $0 \in W_+$. Observe that
$$
\CEK(W_+, W_-) \le \CCC_1(W_+, W_-)=\CCCC(W_+, W_-),
$$
for $\EK(W_+, W_-) \subset \FF_1(W_+, W_-)$ by definition. It remains to see that $\CEK(W_+, W_-)\ge \CCCC(W_+, W_-)$.

So let $\ve>0$ be arbitrary, choose $n> \kappa/(d+1)$ a natural number, and let $f\in \FC(W_+, W_-)$ be any function with $\int_{\RR^d} f > \CCCC(W_+, W_-) - \ve$. As $\supp f_{+} \Subset W_+$ and $W_+$ is open, we have for an appropriately small neighborhood of 0---say with an appropriately small closed ball $\eta \overline{B}$ of radius $\eta>0$---that $\supp f_{+} + 2n\eta \overline{B} \Subset W_+$ still holds.

Take now $u$ to be the $2n$-th convolution power of the characteristic function of $B$, i.e. $u:=\underbrace{\chi_B * \dots *\chi_B}_{{2 n ~\textrm{times}}}$. As $\chi_B=\widetilde{\chi}_B$, this is the Boas-Kac square of the $n$-th convolution power of $\chi_B$, and as such, it is a positive-definite function; moreover, $\ft{u}=|\ft{\chi}_B|^{2n}$. Taking into account  \eqref{Fourier-char-ball} and \eqref{Bessel-j-asymt}, we obtain $|\ft{u}(x)|=O(|x|^{-n(d+1)})~(x\to \infty)$.

It remains to scale $u$ to our needs: we want a function $v:=c u(\lambda x)$ such that it be supported in $2n\eta \overline{B}$ and satisfy $\int_{\RR^d} v =1$. Obviously, $\lambda:=1/\eta$ and $c:=\eta^{-d} |B|^{-2n}$ will do: then $\supp v \Subset 2n\eta \overline{B}$ and its integral is normalized to 1, whence for $g:=f * v$ we find $\int_{\RR^d} g = \int_{\RR^d} f  \int_{\RR^d} v = \int_{\RR^d} f > \CCCC(W_+, W_-) - \ve$. Also, $\supp g_{+} \Subset \supp f_{+} + \supp v = \supp f_{+} + 2n\eta \overline{B} \Subset W_+$ and $\supp g_{-} \Subset \RR^d$. Therefore, $g$ satisfies all conditions to belong to $\FC(W_+, W_-)$ but for the normalization $g(0)=1$. We have $g(0) =  \int_{\RR^d} f(x) v(-x) dx \le  \int_{\supp{f_+}} f(x) v(-x) dx \le f(0)  \int_{\supp{f_+}} v(-x) dx \le  \int_{\RR^d} v(x) dx = 1$. Therefore, taking $h:=\frac{1}{g(0)} g$ we finally get $h \in \FC(W_+, W_-)$ and $\int_{\RR^d} h(x) dx > \CCCC(W_+, W_-)-\ve$. Noting that by scaling and multiplying by a constant the defining property of the decrease of the Fourier transform was not spoiled, so we also have $|\ft{v}|= O(|x|^{-n(d+1)})~(x\to \infty)$.
Finally, $\ft{h}= \frac{1}{g(0)} \ft{f} \ft{v}$ shows that the same ordo estimate remains in effect also for $\ft{h}$ (as $|\ft{f}|$ is bounded by say $|\supp f| f(0)= |\supp f|$). In all, we find that $h \in \EK(W_+, W_-)$, and so $\int_{\RR^d} h \le \CEK(W_+, W_-)$. It follows that $\CEK(W_+, W_-) \ge \CCCC(W_+, W_-)-\ve$. As $\ve>0$ could be fixed arbitrarily, we finally obtain $\CEK(W_+, W_-)\ge \CCCC(W_+, W_-)$.
 \eop \end{proof}


\section{Homomorphisms and the extremal problem} \label{sec:homom}

In this section we obtain statements about the behavior of the value ${\cal C}_G(\Omega_+,\Omega_-)$ under homomorphisms. We follow the considerations of \cite{KR-2006}.

Let $G$ and $H$ be two LCA groups, and let $\varphi : G \to H$ be a continuous group homomorphism onto $H$. The kernel of this homomorphism $K := \Ker{(\varphi)}=\varphi^{-1}(0)$ is a closed subgroup of $G$, and thus it is a LCA group itself. We consider the quotient group $G/K$ together with the canonical or natural projection $\pi : G \to G/K$ which maps an element $g \in G$ to its coset, i.e., $\pi(g) := [g] := g + K \in G/K$. By the definition of the topology on $G/K$, $\pi$ is an open and continuous mapping. Moreover, $\psi := \varphi \circ \pi^{-1} : G/K \to H$ is a continuous isomorphism of the LCA groups $G/K$ and $H$. For details, see, e.g., \cite[Appendices B.2 and B.6]{Rudin-book}.

The Haar measure of a group is determined up to a constant factor. However, the choice of this factor influences the value   ${\cal C}_G(\Omega_+,\Omega_-)$. Suppose the Haar measures $m_G$ and $m_H$ are given. As is standard, we will choose the Haar measures on $K$ and $G/K$ such that $dm_G = dm_K \, dm_{G/K}$, c.f. \cite[(2) on page 54]{Rudin-book}. The isomorphism $\psi$ leads in a natural way to another Haar measure $\nu_H$ on $H$ defined by $\nu_H := m_{G/K} \circ \psi^{-1} = m_{G/K} \circ \pi \circ \varphi^{-1}$. But two Haar measures are constant multiples of each other, so we can define the constant $M := \frac{dm_H}{d\nu_H}$.

\begin{theorem}
\label{homomorphism-theorem}
Let $G$ and $H$ be LCA groups considered with the Haar measures $m_G$ and $m_H$, and let  $\varphi : G \to H$ be a continuous open group homomorphism onto $H$. Let the Haar measures of the subgroup $K := \Ker{(\varphi)}$ and of the quotient group $G/K$ be normalized such that $dm_G = dm_K \, dm_{G/K}$. Let $\nu_H := m_{G/K} \circ \pi \circ \varphi^{-1}$,  where $\pi : G \to G/K$ is the natural projection, and let  $M := \frac{dm_H}{d\nu_H}$.

Let $\Omega_+$ and $\Omega_-$ be open, $0$-symmetric subsets of $G$,
and let $\Theta_\pm := \varphi(\Omega_\pm) \subset H$. Then
$$
{\cal C}_G(\Omega_+,\Omega_-) \le
\frac{1}{M} \, {\cal C}_H(\Theta_+,\Theta_-) \, {\cal C}_{K}(\Omega_+ \cap K, \Omega_- \cap K).
$$
\end{theorem}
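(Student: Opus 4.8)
The plan is to reduce $\int_G f$ for a test function $f$ to the corresponding extremal quantities on $H$ and on $K$, by integrating $f$ first along the cosets of $K$ and then over the quotient $G/K$. By Theorem~\ref{equivalence-theorem} the constant ${\cal C}_G(\Omega_+,\Omega_-)$ is already attained as a supremum over the compact-support class $\FF(\Omega_+,\Omega_-)$, so I fix $f\in\FF(\Omega_+,\Omega_-)$; we may assume $0\in\Omega_+$, for otherwise the left-hand side vanishes. Set
$$
F([x]):=\int_K f(x+k)\,dm_K(k),\qquad [x]=x+K\in G/K.
$$
This is well defined by translation invariance of $m_K$, it lies in $C_c(G/K)$ since $f\in C_c(G)$ and $\supp F\subseteq\pi(\supp f)$ is compact, and Weil's quotient integral formula together with the normalization $dm_G=dm_K\,dm_{G/K}$ gives $\int_G f\,dm_G=\int_{G/K}F\,dm_{G/K}$.

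Next I record three properties of $F$. \emph{(1) $F\gg0$ on $G/K$.} This is the one substantial point: integration of a positive definite function over a closed subgroup produces a positive definite function on the quotient, the device underlying the Tur\'an case in \cite{KR-2006}. One self-contained way to see it: since for bounded continuous functions positive definiteness is the same as being of positive type, it suffices to check $\int_{G/K}(v*\widetilde v)\,F\,dm_{G/K}\ge0$ for every $v\in C_c(G/K)$; writing $v=Tu$ with $u\in C_c(G)$, where $T\phi([x]):=\int_K\phi(x+k)\,dm_K(k)$ is the fibre-integration map (onto $C_c(G/K)$ and intertwining convolution), one unfolds the integral by Weil's formula into $\int_K\big((u*\widetilde u)*_G f\big)\big|_K\,dm_K$, which is $\ge0$ because $(u*\widetilde u)*_G f$ is a compactly supported positive definite function on $G$, hence restricts to a positive definite, compactly supported function on $K$ with nonnegative integral. \emph{(2) The support conditions descend.} If $F([x])>0$ then $f(x+k)>0$ for some $k$, so $x+k\in\supp f_+\Subset\Omega_+$ and hence $[x]\in\pi(\supp f_+)$, a compact subset of $\pi(\Omega_+)$; thus $\supp F_+\Subset\pi(\Omega_+)$, and likewise $\supp F_-\Subset\pi(\Omega_-)$. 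Transporting through the canonical topological isomorphism $\psi:G/K\to H$, $\psi([g])=\varphi(g)$, the function $\widetilde F:=F\circ\psi^{-1}\in C_c(H)$ satisfies $\widetilde F\gg0$ and $\supp\widetilde F_\pm\Subset\varphi(\Omega_\pm)=\Theta_\pm$. \emph{(3) The restriction to the kernel.} $f|_K\in C_c(K)$ is positive definite (restriction of a positive definite function to a subgroup), $f|_K(0)=f(0)=1$, and the same closure argument intersected with $K$ gives $\supp(f|_K)_\pm\Subset\Omega_\pm\cap K$; hence $f|_K$ belongs to the class defining ${\cal C}_K(\Omega_+\cap K,\Omega_-\cap K)$ on the group $K$, so $F([0])=\int_K f|_K\,dm_K\le{\cal C}_K(\Omega_+\cap K,\Omega_-\cap K)$.

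Now I assemble these. If $F([0])=0$ then $|F|\le F([0])=0$, so $\int_G f=0$, while the right-hand side of the asserted inequality is nonnegative (both ${\cal C}_H$ and ${\cal C}_K$ are $\ge0$, being suprema of integrals $\int f=\widehat f(0)\ge0$ of $L^1$ positive definite functions, or $0$ over an empty class); so we are done. If $F([0])>0$, set $g:=F/F([0])$, so $g([0])=1$; then by (1) and (2), $\widetilde g:=g\circ\psi^{-1}$ lies in the class defining ${\cal C}_H(\Theta_+,\Theta_-)$, whence $\int_H\widetilde g\,dm_H\le{\cal C}_H(\Theta_+,\Theta_-)$. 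Changing variables through $\psi$ and using $\nu_H=m_{G/K}\circ\psi^{-1}$ and $m_H=M\,\nu_H$,
$$
\int_{G/K}g\,dm_{G/K}=\int_H\widetilde g\,d\nu_H=\frac1M\int_H\widetilde g\,dm_H\le\frac1M\,{\cal C}_H(\Theta_+,\Theta_-),
$$
and combining with (3),
$$
\int_G f\,dm_G=\int_{G/K}F\,dm_{G/K}=F([0])\int_{G/K}g\,dm_{G/K}\le{\cal C}_K(\Omega_+\cap K,\Omega_-\cap K)\cdot\frac1M\,{\cal C}_H(\Theta_+,\Theta_-).
$$
Taking the supremum over $f\in\FF(\Omega_+,\Omega_-)$ gives the theorem.

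The hard part is property (1), i.e.\ that fibrewise integration over $K$ carries positive definite functions on $G$ to positive definite functions on $G/K$, together with keeping the Haar-measure bookkeeping consistent so that exactly the factor $1/M$ emerges; both are standard once the normalization $dm_G=dm_K\,dm_{G/K}$ is fixed, and (1) is precisely the tool already exploited in \cite{KR-2006}, whose approach the paper follows. A small point not to overlook is the degenerate case $F([0])=0$ treated above.
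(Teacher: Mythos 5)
Your proof is correct, and its overall architecture is the same as the paper's (which follows \cite{KR-2006}): integrate $f$ along the fibres of $K$, show the resulting function is positive definite with positive/negative supports compactly inside $\Theta_\pm$, bound its normalized integral by ${\cal C}_H(\Theta_+,\Theta_-)$, bound the normalizing value $F(0)=\int_K f|_K\,dm_K$ by ${\cal C}_K(\Omega_+\cap K,\Omega_-\cap K)$ via restriction, and extract the factor $1/M$ from the measure bookkeeping. Where you genuinely diverge is in the execution of the two key verifications. The paper defines $F$ on $H$ through a choice of coset representatives $g(h)$ (Axiom of Choice) and then must prove continuity of $F$ by a hands-on $\varepsilon$-argument that uses the openness of $\varphi$; it then establishes $F\gg0$ by computing $\widehat F(\overline\chi)=M\,\widehat f(\overline{\chi\circ\varphi})\ge0$ for every character $\chi$ of $H$ and invoking the equivalence of nonnegative Fourier transform and positive definiteness for continuous integrable functions (Bochner--Weil plus inversion). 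You instead work on $G/K$, where $F=Tf$ is well defined without any choices, continuity and compact support come from the standard properties of the fibre-integration map $T:C_c(G)\to C_c(G/K)$, and positive definiteness is checked through the positive-type criterion: surjectivity of $T$ and the intertwining $T(u)*T(\widetilde u)=T(u*\widetilde u)$ let you unfold $\int_{G/K}(v*\widetilde v)F\,dm_{G/K}$ by Weil's formula into the integral over $K$ of the restriction of the compactly supported positive definite function $(u*\widetilde u)*f$, which is nonnegative; openness of $\varphi$ enters only through the topological isomorphism $\psi:G/K\to H$ (and openness of $\Theta_\pm$). Your route buys a cleaner, choice-free definition of $F$ and avoids the explicit continuity estimate, at the price of invoking the standard (but nontrivial) quotient-integration facts about $T$; the paper's character computation is more self-contained at that point and yields the Fourier-transform identity directly. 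A small bonus on your side: you treat the degenerate case $F(0)=0$ explicitly, which the paper passes over when dividing by $F(0)$.
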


This result corresponds to \cite[Proposition 3]{KR-2006} and also the proof goes along the same lines. However, we need to point out that for this proof one really needs to assume that $\vf$ is an \emph{open} continuous homomorphism, somewhat restricting generality of both statements here and in \cite{KR-2006}.

\begin{proof}
The sets $\Theta_\pm$ and $\Omega_\pm \cap K$ are obviously open in the corresponding topologies of $H$ and $K$, respectively, and $0$-symmetric. Clearly, $0 \not\in \Omega_+$ if and only if $0 \not\in \Omega_+ \cap K$, and in this case both sides of the inequality are zero. We therefore consider the case $0 \in \Omega_+$. Then also $0 \in \Omega_+ \cap K$ and $0 \in \Theta_+$.
The mapping $\psi := \varphi \circ \pi^{-1} : G/K \to H$ is a continuous open isomorphism of the LCA groups $G/K$ and $H$.

For each $h \in H$ choose (invoking here the Axiom of Choice) $g(h) \in G$ to be an arbitrary representative of the inverse image $\varphi^{-1}(h)$, i.e. an element of the coset $\psi^{-1}(h)$. Let $f \in {\cal F}_{c,G}(\Omega_+,\Omega_-)$ where the notation emphasizes that we consider a function $f$ on $G$. Define $F : H \to \R$ by
$$
F(h) := \int_K f(g(h) + k) \, dm_K(k).
$$
Now we claim that $F$ is continuous, too. To show this, take an arbitrary $\varepsilon > 0$: then by uniform continuity of $f$ there exists a neighborhood $V=V_G$ of $0$ in $G$ such that $|f(g_1) - f(g_2)| < \varepsilon$ for all $g_1, g_2 \in G$ with $g_1 -  g_2 \in V_G$. Clearly $V_G$ can be taken an open set with compact closure so that in particular $m_G(V_G) < \infty$. Let us fix $h_1 \in H$ and  write $g_1:=g(h_1) \in \varphi^{-1}(h_1)$: we are to show that $F$ is continuous at $h_1$. Put $V_H := \varphi(V_G)$. Since $\varphi$ is open and so is $V_G$, also $V_H$ is a neighborhood of $0$ in $H$. Consider $h_2 \in H$ such that $h_1 - h_2 \in V_H$. This means that $h_1 - h_2 = \varphi(g)$ for some $g \in V_G$. Let $g_2:=g(h_2) (\in \varphi^{-1}(h_2))$. Then $\varphi(g_1 - g_2) = \varphi(g_1) - \varphi(g_2) =h_1- h_2 = \varphi(g)$.  It follows that $(g_1 - g_2) - g \in \Ker(\varphi)=K$, i.e.  $g_1 - g_2 - g = k^*$ with some $k^* \in K$. For the element $g_2^* := g_2 + k^*$ we have $\varphi(g_2^*) =h_2$ and $g_1 - g_2^* = g \in V_G$. By the choice of the neighborhood $V_G$ we have $|f(g_1 + k) - f(g_2^* + k)| < \varepsilon$ for all $k \in K$. Thus,
\begin{align*}
\Big|F(h_1) - F(h_2)\Big| & = \left| \int_K f(g_1+k)\, dm_K(k) - \int_K f(g_2+k) \, dm_K(k) \right|
\\& = \left| \int_K \Big( f(g_1+k)- f(g_2+k^*+k) \Big) \, dm_K(k) \right|
\\&  \le  \int_K \Big|f(g_1 + k) - f(g_2^* + k)\Big| \, dm_K(k) <  C \,\varepsilon
\end{align*}
with $C:=  m_K( (\supp{f} - g_1 + V_G) \cap K) < \infty$. (Here we had to use that $f$ is compactly supported, thanks to $f\in {\cal F}_{c,G}(\Omega_+,\Omega_-) $.) This shows that $F$ is continuous at $h_1$.

Now we show that $\supp{F_\pm} \Subset \Theta_\pm$. Indeed, $\{ h~:~ F(h)>0\} \subset \{ h ~:~ \exists k \in K ~\textrm{such that}~ f(g(h)+k)>0\} = \vf(\{g~:~f(g)>0\}) \subset \vf(\supp f_{+})$, which is compact, whence also for the closure $\supp F_{+} \Subset \vf(\supp f_{+}) \subset  \vf(\Omp) =\Theta_{+}$.
The proof for $\supp{F_-}$ is similar.

Clearly, $F(0) = \int_K f(k) \, dm_K(k)$. By Fubini's theorem,
\begin{eqnarray} \label{F-integral}
\int_H F(h) \, dm_H(h) & = &
\int_H \int_K f(g(h) + k) \, dm_K(k) \, M \, d\nu_H(h) \nonumber\\
& = &
M \, \int_H \int_K f(g(h) + k) \, dm_K(k) \, dm_{G/K}(\psi^{-1}(h)) \nonumber \\
& = &
M \, \int_G f(g) \, dm_G(g).
\end{eqnarray}

To prove that $F$ is positive definite on $H$, we first notice that for every continuous character $\chi$ on $H$ the function $\gamma := \chi \circ \varphi$ is a continuous character on $G$. What we are to use here is that for a continuous and integrable function positive definitness is equivalent to non-negativity of its Fourier transform. This follows from the inversion theorem for the Fourier transform \cite[Theorem 1.5.1]{Rudin-book} and the Bochner-Weil theorem \cite[Theorem 1.4.3]{Rudin-book}. See also \cite[(4.23) Corollary]{Folland} or \cite[p.~483]{KR-2006}.
Applying (\ref{F-integral}) to $f_1 := f\gamma$  and
$$
F_1(h) := \int_K f_1(g(h) + k) dm_G(k) = \int_K f(g(h) + k) \chi(\vf(g(h) + k)) dm_G(k) = \chi(h) F(h),
$$
we obtain
$$
\widehat{F}(\overline{\chi}) =  \int_H F(h) \, \chi(h) \, dm_H(h) = M \, \int_G f(g) \, \gamma(g) \, dm_G(g) = M \widehat{f}(\overline{\gamma}) \ge 0
$$
since $f $ is positive definite on $G$. Thus, $\widehat{F}\ge 0$ and in view of the continuity and integrability of $F$, this implies $F\gg 0$ on $H$.

We see from the above that the function $F_0 := \frac{1}{F(0)} F = \frac{1}{\int_K f \, dm_K} F$ belongs to the class ${\cal F}_{c,H}(\Theta_+,\Theta_-)$, and thus
$$
\int_H F \, dm_H \le {\cal C}_H(\Theta_+,\Theta_-) \, \int_K f \, dm_K.
$$
Furthermore, $f|_K$ is positive definite and thus $f|_K \in {\cal F}_{c,K}(\Omega_+ \cap K, \Omega_- \cap K)$, giving the estimate $\int_K f \, dm_K \le {\cal C}_K(\Omega_+ \cap K, \Omega_- \cap K)$. Using (\ref{F-integral}), we obtain
$$
\int_G f \, dm_G = \frac{1}{M} \, \int_H F \, dm_H \le  \frac{1}{M} \, {\cal C}_H(\Theta_+,\Theta_-) \, {\cal C}_K(\Omega_+ \cap K, \Omega_- \cap K)
$$
for each $f \in {\cal F}_{c,G}(\Omega_+,\Omega_-)$, which implies the desired statement.
 \eop \end{proof}

\begin{corollary}
Let $\varphi : G \to G$ be a continuous open automorphism of a LCA group $G$, and let $\Omega_+, \Omega_-$ be two open\footnote{In view of the existence of an open continuous automorphism, the two copies of $G$ have equivalent topologies.}, $0$-symmetric sets in $G$.
 Then
$$
{\cal C}_G(\varphi(\Omega_+),\varphi(\Omega_-)) = M \, {\cal C}_G(\Omega_+,\Omega_-) ,
$$
where  $M := \dfrac{d (m_G \circ \varphi)}{d m_G}$. In particular, if   $0<m_G(\Omega_{\pm})<\infty$ then we have $M=\frac{m_G(\varphi(\Omega_+))}{m_G(\Omega_+)} = \frac{m_G(\varphi(\Omega_-))}{m_G(\Omega_-)}$.

\end{corollary}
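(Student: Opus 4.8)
The plan is to obtain the Corollary as a two-sided application of Theorem~\ref{homomorphism-theorem} with $H=G$ and with the automorphism $\varphi$ itself playing the role of the homomorphism. Since $\varphi$ is an automorphism it is injective, so $K:=\Ker(\varphi)=\{0\}$ and the natural projection $\pi:G\to G/K$ is the (essentially tautological) isomorphism $g\mapsto\{g\}$; correspondingly $\psi=\varphi\circ\pi^{-1}$ is just $\varphi$ itself after the identification $G/K\cong G$. First I would fix the normalization of the (trivial) Haar measure $m_K$ on $\{0\}$ to be the unit point mass at $0$; then the compatibility requirement $dm_G=dm_K\,dm_{G/K}$ of the theorem forces $m_{G/K}=m_G$ under the identification $G/K\cong G$. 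Hence the auxiliary measure of the theorem is $\nu_H=m_{G/K}\circ\pi\circ\varphi^{-1}=m_G\circ\varphi^{-1}$, and the constant occurring there equals $\dfrac{dm_H}{d\nu_H}=\dfrac{dm_G}{d(m_G\circ\varphi^{-1})}$.

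The next step is to check that this last quantity is exactly the constant $M=\dfrac{d(m_G\circ\varphi)}{dm_G}$ of the Corollary, and that it is well defined. For this I would first note that $m_G\circ\varphi$ is again a Haar measure on $G$: it is a positive regular Borel measure, and it is translation invariant because $\varphi$ is a group automorphism, $m_G(\varphi(x+A))=m_G(\varphi(x)+\varphi(A))=m_G(\varphi(A))$. Thus $m_G\circ\varphi=M\,m_G$ for a well-defined constant $M>0$; replacing $A$ by $\varphi^{-1}(A)$ gives $m_G=M\,(m_G\circ\varphi^{-1})$, i.e. $\dfrac{dm_G}{d(m_G\circ\varphi^{-1})}=M$, as claimed. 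I would also record the two trivial facts used below: $\varphi(0)=0$, so $\Omega_+\cap K=\{0\}$ whenever $0\in\Omega_+$; and the extremal constant on the one-point group is ${\cal C}_{\{0\}}(\{0\},\{0\})={\cal C}_{\{0\}}(\{0\},\emptyset)=1$, since the only admissible function is $f\equiv 1$ and $\int_K f\,dm_K=f(0)\,m_K(\{0\})=1$.

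With these identifications in place, Theorem~\ref{homomorphism-theorem} applied to $\varphi$ with $\Theta_\pm=\varphi(\Omega_\pm)$ reads ${\cal C}_G(\Omega_+,\Omega_-)\le\frac{1}{M}\,{\cal C}_G(\varphi(\Omega_+),\varphi(\Omega_-))$, that is $M\,{\cal C}_G(\Omega_+,\Omega_-)\le{\cal C}_G(\varphi(\Omega_+),\varphi(\Omega_-))$. For the reverse inequality I would apply the very same estimate to $\psi:=\varphi^{-1}$ in place of $\varphi$ and to the sets $\varphi(\Omega_\pm)$ in place of $\Omega_\pm$; here $\psi$ is again a continuous open automorphism of $G$ (the inverse of a continuous open bijection is a homeomorphism, and its openness is equivalent to continuity of $\varphi$), its associated constant is $M_\psi=\frac{d(m_G\circ\varphi^{-1})}{dm_G}=1/M$ by the computation above, and $\psi(\varphi(\Omega_\pm))=\Omega_\pm$; this yields $\frac1M\,{\cal C}_G(\varphi(\Omega_+),\varphi(\Omega_-))\le{\cal C}_G(\Omega_+,\Omega_-)$. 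Combining the two inequalities gives ${\cal C}_G(\varphi(\Omega_+),\varphi(\Omega_-))=M\,{\cal C}_G(\Omega_+,\Omega_-)$. The case $0\notin\Omega_+$ is trivial, since then both extremal constants vanish and $0\notin\varphi(\Omega_+)$ as well, so the asserted identity holds with both sides equal to zero. Finally, the ``in particular'' clause follows at once from $m_G(\varphi(\Omega_+))=(m_G\circ\varphi)(\Omega_+)=M\,m_G(\Omega_+)$ and the analogous relation for $\Omega_-$, provided these measures are finite and strictly positive.

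The step I expect to require the most care is the bookkeeping of Haar-measure normalizations in the degenerate case $K=\{0\}$: one has to pin down $m_K$ so that the constant $dm_H/d\nu_H$ produced by Theorem~\ref{homomorphism-theorem} genuinely coincides with the constant $M$ of the Corollary, and one has to observe that the trivial-group extremal constant ${\cal C}_K(\Omega_+\cap K,\Omega_-\cap K)$ contributes precisely the harmless factor $1$ regardless of that normalization. Once this is checked, the statement is just a substitution into the already-established theorem, together with the symmetric application of the same theorem to $\varphi^{-1}$.
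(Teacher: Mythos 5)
Your proposal is correct and follows essentially the same route as the paper: apply Theorem~\ref{homomorphism-theorem} with $H=G$ and trivial kernel (so $m_K=\delta_0$, $m_{G/K}=m_G$, $\nu_H=m_G\circ\varphi^{-1}$, and the trivial-group factor equals $1$), identify the theorem's constant with $M=\frac{d(m_G\circ\varphi)}{dm_G}$, and then apply the same estimate to $\varphi^{-1}$ for the reverse inequality. Your extra care with the normalization of $m_K$ and the verification that $m_G\circ\varphi$ is again a Haar measure merely makes explicit what the paper asserts without comment.
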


\begin{proof} (Cf. \cite[Corollary 1]{KR-2006}.)
We apply Theorem~\ref{homomorphism-theorem} with $H=G$. In this case $K = \{0\}$, $m_K = \delta_0$, $K \cap \Omega_{+} \subset \{0\}$, and in case $K \cap \Omega_{+} = \{0\}$ we have ${\cal C}_K(\Omega_+ \cap K, \Omega_- \cap K) = 1$, $G/K = G$, $m_{G/K} = m_G$, and $\pi$ is the identity.  (The other case when $K \cap \Omega_{+} =\emptyset$ is trivial and gives that both sides vanish.) To calculate the constant $M$ we observe that $\nu_G = m_G \circ \varphi^{-1}$, and for each measurable set $\Omega^* \subset G$ we have $M = \frac{m_G(\Omega^*)}{m_G(\varphi^{-1}(\Omega^*))}$. The desired representation of $M$ can be obtained by taking $\Omega^* = \varphi(\Omega_+)$ and  $\Omega^* = \varphi(\Omega_-)$, respectively.

Now, Theorem~\ref{homomorphism-theorem} gives
$$
{\cal C}_G(\Omega_+,\Omega_-) \le \frac{1}{M} \, {\cal C}_G(\varphi(\Omega_+),\varphi(\Omega_-)).
$$
Since $\varphi^{-1}$ is also a continuous  open automorphism, an application of Theorem~\ref{homomorphism-theorem} for $\varphi^{-1}$ provides the converse inequality.
 \eop \end{proof}

\begin{corollary}
Let $G_1, \ldots, G_n$ be LCA groups and $G := G_1 \times \cdots \times G_n$. Let $\Omega_{j,\pm} \subset G_j$, $j=1, \ldots, n$, be open, $0$-symmetric sets,
and let $\Omega_\pm := \Omega_{1,\pm} \times \cdots \times \Omega_{n,\pm}$. Then
\be \label{tensor-product}
{\cal C}_G(\Omega_+,\Omega_-) \le {\cal C}_{G_1}(\Omega_{1,+},\Omega_{1,-}) \cdots  {\cal C}_{G_n}(\Omega_{n,+},\Omega_{n,-}).
\ee

\end{corollary}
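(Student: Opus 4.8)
The plan is to argue by induction on $n$. For $n=1$ the asserted inequality is the trivial identity ${\cal C}_{G_1}(\Omega_{1,+},\Omega_{1,-})={\cal C}_{G_1}(\Omega_{1,+},\Omega_{1,-})$. So let $n\ge2$, assume the statement for $n-1$ factors, and set $H:=G_2\times\cdots\times G_n$, so that $G=G_1\times H$. Throughout we take the Haar measure on a finite direct product of LCA groups to be the product of the given Haar measures on the factors; in particular $dm_G=dm_{G_1}\,dm_H$ and $dm_H=dm_{G_2}\cdots dm_{G_n}$. We may also assume $0\in\Omega_+$, i.e. $0\in\Omega_{j,+}$ for every $j$, since otherwise the left-hand side of \eqref{tensor-product} vanishes.

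First I would apply Theorem~\ref{homomorphism-theorem} to the coordinate projection $\varphi:G=G_1\times H\to H$ that forgets the first coordinate. This is a continuous, open, surjective group homomorphism with kernel $K=G_1\times\{0_H\}$, which we identify canonically with $G_1$, the Haar measure $m_K$ corresponding to $m_{G_1}$. The natural projection $\pi:G\to G/K$ together with the isomorphism $\psi:G/K\to H$ identifies $G/K$ with $H$ and $m_{G/K}$ with $m_H$; hence the normalization $dm_G=dm_K\,dm_{G/K}$ holds with these choices, the induced measure is $\nu_H=m_{G/K}\circ\pi\circ\varphi^{-1}=m_H$, and therefore $M=dm_H/d\nu_H=1$. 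It remains to read off the image and trace sets. Since $\Omega_\pm=\Omega_{1,\pm}\times\Omega_{2,\pm}\times\cdots\times\Omega_{n,\pm}$ and $\Omega_{1,+}\ni0$ is nonempty, the projection gives $\Theta_+:=\varphi(\Omega_+)=\Omega_{2,+}\times\cdots\times\Omega_{n,+}$ and $\Theta_-:=\varphi(\Omega_-)\subseteq\Omega_{2,-}\times\cdots\times\Omega_{n,-}$ (with equality unless $\Omega_{1,-}=\emptyset$). Likewise, under the identification $K\cong G_1$, the set $\Omega_+\cap K$ corresponds to $\Omega_{1,+}$ (here we use $0\in\Omega_{j,+}$ for $j\ge2$), while $\Omega_-\cap K$ corresponds to a subset of $\Omega_{1,-}$.

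Theorem~\ref{homomorphism-theorem} now yields
$$
{\cal C}_G(\Omega_+,\Omega_-)\ \le\ \frac1M\,{\cal C}_H(\Theta_+,\Theta_-)\,{\cal C}_K(\Omega_+\cap K,\Omega_-\cap K)\ =\ {\cal C}_H(\Theta_+,\Theta_-)\,{\cal C}_{G_1}(\Omega_+\cap K,\Omega_-\cap K),
$$
and two applications of the elementary monotonicity of ${\cal C}$ under enlargement of its two defining sets (noted in the introduction) bound the right-hand side by $ {\cal C}_H\big(\Omega_{2,+}\times\cdots\times\Omega_{n,+},\ \Omega_{2,-}\times\cdots\times\Omega_{n,-}\big)\,{\cal C}_{G_1}(\Omega_{1,+},\Omega_{1,-})$. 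Finally, the induction hypothesis applied to the $(n-1)$-fold product $H=G_2\times\cdots\times G_n$ bounds the first factor by ${\cal C}_{G_2}(\Omega_{2,+},\Omega_{2,-})\cdots{\cal C}_{G_n}(\Omega_{n,+},\Omega_{n,-})$, and \eqref{tensor-product} follows. I do not anticipate a genuine obstacle here, since each step is either a direct invocation of Theorem~\ref{homomorphism-theorem} or of the trivial monotonicity; the only points requiring a modicum of care are the bookkeeping of the Haar-measure normalizations that forces $M=1$, and the degenerate cases where some $\Omega_{j,-}$ (or $\Omega_{1,-}$ itself) is empty, which merely shrink $\Theta_-$ or $\Omega_-\cap K$ and are harmlessly absorbed by the monotonicity.
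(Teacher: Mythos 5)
Your proposal is correct and takes essentially the same route as the paper, whose proof is precisely induction on $n$ combined with one application of Theorem~\ref{homomorphism-theorem} for a coordinate projection of the direct product (the paper projects onto a single factor, you onto the complementary product --- an immaterial difference). The additional bookkeeping you carry out (product Haar measures forcing $M=1$, identification of $\Omega_\pm\cap K$ with subsets of $\Omega_{1,\pm}$, and monotonicity to absorb the degenerate empty cases) is exactly what the paper leaves implicit.
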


\begin{proof}
(Cf. \cite[Corollary 2]{KR-2006}.) The inequality follows by induction in $n$ from Theorem~\ref{homomorphism-theorem} with $\varphi$ being a projection to one of the components of the direct product\footnote{In \cite[Corollary 2]{KR-2006}, the corresponding statement for the Tur\'an problem was considered, and it was shown that in this case, i.e., when $\Omega_+ = \Omega_-$, (\ref{tensor-product}) turns into equality. However, we cannot guarantee equality in the current general case. This arises from the fact that the product of two negative functions can be positive.}.
 \eop \end{proof}

The reader will find no difficulty in extending the above to the topological product of an arbitrary number of LCA groups. However, openness of the sets $\Omega_\pm$ imply that apart from a finitely many initial components the rest of the groups $G_i$ are contained in $\Omega_\pm$, whence the corresponding extremal constants ${\cal C}_{G_i}(\Omega_{i,+},\Omega_{i,-})$ are either $1$ in case when $G_i$ is compact, or infinity in case when $G_i$ is not compact.

\begin{corollary}
Let $G$ be a LCA group, $K$ be a closed subgroup of $G$, and suppose that the Haar measures are normalized so that $dm_G = dm_K \, dm_{G/K}$. Let $\pi : G \to G/K$ denote the natural projection. If $\Omega_+, \Omega_-$ are two open, $0$-symmetric sets in $G$,
then
$$
{\cal C}_G(\Omega_+,\Omega_-) \le  {\cal C}_{G/K}(\pi(\Omega_+),\pi(\Omega_-)) \, {\cal C}_{K}(\Omega_+ \cap K, \Omega_- \cap K).
$$
\end{corollary}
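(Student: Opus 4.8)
The plan is to obtain this corollary as an immediate specialization of Theorem~\ref{homomorphism-theorem}. I would take $H := G/K$ equipped with the Haar measure $m_H := m_{G/K}$ fixed by the normalization $dm_G = dm_K\,dm_{G/K}$, and choose $\varphi := \pi$, the natural projection $G \to G/K$. By the very definition of the quotient topology, $\pi$ is a continuous \emph{open} homomorphism of $G$ onto $G/K$, which is precisely the hypothesis required by Theorem~\ref{homomorphism-theorem} (as the remark following that theorem emphasises). Its kernel is $\Ker(\pi) = K$, so the subgroup occurring in Theorem~\ref{homomorphism-theorem} is exactly our $K$, and the Haar-measure normalization there is the one we have assumed.

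Next I would identify the constant $M$ of Theorem~\ref{homomorphism-theorem}. With $H = G/K$ and $\varphi = \pi$, the auxiliary Haar measure there is $\nu_H = m_{G/K}\circ\pi\circ\pi^{-1} = m_{G/K} = m_H$, so $M = \frac{dm_H}{d\nu_H} = 1$. Likewise $\Theta_\pm = \varphi(\Omega_\pm) = \pi(\Omega_\pm)$, while $\Omega_\pm\cap K$ is unchanged. Substituting into the conclusion of Theorem~\ref{homomorphism-theorem} yields
$$
{\cal C}_G(\Omega_+,\Omega_-) \le \frac{1}{M}\,{\cal C}_{G/K}(\pi(\Omega_+),\pi(\Omega_-))\,{\cal C}_K(\Omega_+\cap K,\Omega_-\cap K) = {\cal C}_{G/K}(\pi(\Omega_+),\pi(\Omega_-))\,{\cal C}_K(\Omega_+\cap K,\Omega_-\cap K),
$$
which is the assertion.

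There is essentially no obstacle beyond this bookkeeping; the only point deserving a moment's care — and the reason Theorem~\ref{homomorphism-theorem} is stated for \emph{open} homomorphisms — is the openness of $\pi$, which is however built into the construction of the quotient topology, so nothing further is needed. One may additionally observe that the isomorphism $\psi = \varphi\circ\pi^{-1}$ of Theorem~\ref{homomorphism-theorem} degenerates to the identity on $G/K$ in this setting, making the identification $M = 1$ completely transparent.
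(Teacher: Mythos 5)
Your proposal is correct and follows exactly the paper's own argument: apply Theorem~\ref{homomorphism-theorem} with $H = G/K$ and $\varphi = \pi$ (open and continuous), observe $\nu_H = m_{G/K}\circ\pi\circ\pi^{-1} = m_{G/K}$ so that $M=1$, and read off the inequality. Nothing further is needed.
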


\begin{proof}
(Cf. \cite[Corollary 3]{KR-2006}.) Apply  Theorem~\ref{homomorphism-theorem} with $H = G/K$ and $\varphi = \pi$ which is open and continuous. In this case $\nu_H = m_{G/K} \circ \pi \circ \varphi^{-1} = m_{G/K}$, so that $M=1$.
 \eop \end{proof}


\section{Packing, covering, tiling and the extremal problem}

Let $H \in {\cal B}_0$.  We say that the set $H$ \emph{packs} $G$ \emph{by translation} with the translation set $\Lambda \subset G$ if
\be \label{packing}
\sum_{\lambda \in \Lambda} \chi_H(x - \lambda) \le 1 \qquad \text{a.e.} \ x \in G.
\ee
In other words, for a.e. $x \in G$ there is at most one $\lambda \in \Lambda$ such that $x$ lies in the set $H + \lambda$.

Further on, we say that the set $H$ \emph{covers} $G$ \emph{by translation} with the translation set $\Lambda \subset G$ if
$$
\sum_{\lambda \in \Lambda} \chi_H(x - \lambda) \ge 1 \qquad \text{a.e.} \ x \in G.
$$
In other words, $H + \Lambda$ contains almost all points of $G$.

Finally, we say  that the set $H$ \emph{tiles} $G$ \emph{by translation} with the translation set $\Lambda \subset G$ if $H$ simultaneously packs and covers $G$ with the translation set $\Lambda$, i.e.,
\begin{equation}\label{tiling}
\sum_{\lambda \in \Lambda} \chi_H(x - \lambda) = 1 \qquad \text{a.e.} \ x \in G.
\end{equation}
This means that almost all $x \in G$ belong to exactly one of the sets $H + \lambda$.

The slight generalization using a.e. conditions here (rather than the strict conditions for every point $x\in G$) became widely used for its convenience when dealing with tiling: for a closed square we want to say that it still packs (so also tiles) space, and the same way we also consider the open squares still covering (so even tiling) space.

In general it is tacitly assumed, see e.g. in \cite{KR-2006} or \cite{Revesz-2011}, that we can always ``correct'' the underlying set $H$ by a measure zero difference to become a strict packing or covering or tiling, as we wish. Therefore, if we need to apply e.g. a strict packing condition, then we may modify the setup accordingly.

This is indeed true essentially, whence in the further discussion we will feel free to require the following somewhat more stringent conditions, which we indeed need in the proofs. Namely we will consider the assumption that inequality (\ref{packing}) is fulfilled \emph{for all} $x \in G$. When this holds, we will say that $H$ \emph{packs} $G$ \emph{in the strict sense} (and in case it also covers $G$ with the same $\Lambda$, we will accordingly say that $H$ \emph{tiles} $G$ \emph{in the strict sense}). So we say that $H$ \emph{tiles} $G$ \emph{in the strict sense}, if $H$ tiles $G$ and the packing is in the strict sense, i.e., if the tiling is disjoint---but we still do not assume the covering to hold everywhere (but only a.e.).

It is easy to see that this packing condition in the strict sense is equivalent to  \eqref{stricpacking}. This motivates---closely following \cite{KR-2006} and \cite{Revesz-2011}---the consideration of the following ``generalized packing type condition'', where already there is no packing, but a general set $W$ replaces the difference set $H - H$ of the packing set $H$ in the above formulation.

\begin{definition}\label{def:genpack}
We say that a set $W \in {\cal B}_0$ satisfies a generalized strict packing type condition (``packing \emph{type} condition'' for short) with the translation set  $\Lambda \subset G$ if
$$
(\Lambda - \Lambda) \cap W \subseteq \{0\}.
$$
\end{definition}

Note that difference sets have many strong structural properties, which are extensively analyzed in the literature, see e.g. \cite{matolcsi-ruzsa} and the references therein, so replacing a difference set by a general set $W$ without this extra structure is indeed a generalization.

Also, reflecting back to the original setup, it is worth noting that even if packing by $H$ or $M\subset H$ of the same measure can be equivalent, the difference sets $H-H$ and $M-M$ may indeed have essentially different properties. Before proceeding let us see an instructive example, explaining why we step back from the a.e. formulation.

\begin{example}\label{ex:difference} Consider $G:=\RR$ and let $H:=\{-4\} \cup (-1,1)\cup\{4\}$, which satisfies a (not strict) packing (and also covering and tiling) condition with the translational set $\Lambda:=2\ZZ$. If we ``correct'' $H$ by dropping the two isolated points to become $M:=(-1,1)$, then $M$ already satisfies a strict packing (and tiling) condition with the same $\Lambda=2\ZZ$. However, the difference sets $Q:=H-H=(-5,-3)\cup(-2,2)\cup(3,5)$ and $W:=M-M=(-2,2)$ are totally different. Indeed, $W$ satisfies the (generalized, strict) packing type condition of Definition \ref{def:genpack} with $\La=2\ZZ$, while the same fails for $Q$: in fact $(\La-\La) \cap Q = 2\ZZ \cap Q = \{-4, 0, 4\}$. Further, if any translational set $L$ satisfies $(L-L)\cap Q =\{0\}$, then the asymptotic density of $L$ cannot exceed $2/5$ (while the asymptotic density of $\Lambda$ was $1/2$). Furthermore, even the Delsarte constants of the two difference sets are essentially different:
\begin{equation} \label{eq:example1}
\DD(Q)>\DD(W)=2.
\end{equation}

\end{example}


{\bf Proof of~\eqref{eq:example1}.}
Let $L\subset \RR$ be an arbitrary set satisfying $(L-L)\cap Q =\{0\}$. Assume (as we may translate $L$) that $0\in L$. Let us list the positive elements of $L$ in increasing order: $L_{+}:=L\cap(0,\infty) =\{\ell_1<\ell_2<\dots<\ell_n<\dots\}$. As $0\in L$, we must have $\ell_1\ge 2$, for $\ell_1-0 \in L-L$ cannot belong to $Q$. The same holds for any consecutive pairs $\ell_{k+1}$ and $\ell_k$: we must have $\ell_{k+1}-\ell_k \ge 2$. But adding this for two consecutive differences we find $\ell_{k+2}-\ell_k \ge 4$, and, as $(3,5)\subset Q$, we infer even $\ell_{k+2}-\ell_k\ge 5$. It follows that $\ell_2\ge 5$ and in general $\ell_{2k}\ge 5k$. Arguing similarly for $L_{-}:=L\cap(-\infty,0)$, we find that the number of points of $L$ lying in $[-5n,5n]$ can be at most $4n+1$, furnishing the upper estimate $2/5$ for the asymptotic density of $L$. (This can indeed be attained by choosing $L:=5\ZZ\cup (2+5\ZZ)$.)

The value of the Delsarte constant is easier to find for $W=M-M$, as $M$ tiles in the strict sense with $\Lambda=2\ZZ$, whence the below Proposition \ref{tile-theorem} provides $\DD(W)=2$.

In the following we estimate $\DD(Q)$ showing that it exceeds $2$: our construction in fact will even prove that $\CCC(Q,\emptyset)>2$.

To start with, let $T(t):=1+a\cos t + b \cos{(4t)} \ge 0$ be any nonnegative cosine polynomial with spectrum $\{0,1,4\}$: we denote the set of all such polynomials by $\PP$, say. Also recall that the usual triangle function $\Delta(t):=(1-|t|)_{+}=(\chi_{[-1/2,1/2]} * \chi_{[-1/2,1/2]})(t)$ is positive definite, $\Delta\gg 0$.

So now consider the measure $\mu:=\de_0+\dfrac{a}{2}(\de_1+\de_{-1})+\dfrac{b}{2}(\de_{4}+\de_{-4})$ (with $\de_c$ standing for the Dirac measure concentrated on the point $c$). The Fourier transform of this measure is exactly $T(t)$, whence if $T\ge 0$ (i.e. when $T\in \PP$), then $\mu\gg0$ is a positive definite measure, whence the convolution $\Phi:=\Delta * \mu$ is also a positive definite continuous function (with Fourier transform $T(t)\cdot\left(\frac{\sin t}{t}\right)^2 \ge 0$). Note that $\Phi(0)=\int \Delta(-x) d\mu(x) = \Delta(0)=1$. Further, $\Phi \ge 0$ and the support of $\Phi$ is contained in the sum of the supports of $\De$ and $\mu$ from its defining convolution, i.e. in $[-1,1]+\{0,\pm 1, \pm 4\} =[-5,-3]\cup[-2,2]\cup[3,5]=\overline{Q}$. Therefore (essentially) we obtain $\Phi \in \FC(Q,\emptyset)$ entailing $\CCC(Q,\emptyset)\ge \int \Phi(x)dx = \int \int \Delta(x-y) d\mu (y) dx = (\int \Delta) \cdot \mu(\RR)= 1+a+b$. (Here we neglected a trivial dilation---taking $\Phi_{\ve}:= \Delta_{\ve} * \mu$ with $\De_{\ve}(x):=\De((1+\ve)x)$ and then passing to the limit when $\ve\to+0$ could precisely show the same.)

It already proves the assertion if we find a cosine polynomial $T\in \PP$ with $T(0)=1+a+b>2$. Existence of such a polynomial, on the other hand, is kind of trivial, for the minimum of $\cos t$ is at $\pi$ (modulo $2\pi\ZZ$), while there the wave $\cos(4t)$ is strictly positive: so a polynomial $1+\cos t +\ve \cos(4t)$ must be nonnegative for small enough (but still positive) $\ve>0$.

A more precise analysis of such ``trinomials'' has been carried out (also for being applied in a more intricate question through duality) in \cite{R-Szeged}. To find an explicit (possibly close to be best) value we briefly employ the methods of \cite[\S 2]{R-Szeged} here.

\begin{lemma}\label{l:trinom} We have $\sup \left\{ 1+a+b~:~ T(t)=1+a\cos t + b \cos(4t) \in \PP\right\} >2$. In fact, the value $T(0)=2.236...$ is achieved by the (approximately extremal) polynomial $T_0(t):= 1+ 0.989286995... \cos t + 0.246780732...  \cos(4t)$.
\end{lemma}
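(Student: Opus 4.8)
The plan is to split the proof into two independent parts: first the strict inequality $\sup\{1+a+b : T \in \PP\} > 2$, which is elementary, and then the quantitative refinement, obtained by exhibiting the explicit near-extremal polynomial $T_0$ and checking that it belongs to $\PP$. The route by which $T_0$ is located follows the extremal-polynomial calculus of \cite[\S 2]{R-Szeged}, but for the proof itself the only things that must be verified are that $T_0 \in \PP$ and that $T_0(0) = 2.236\ldots$.

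For the strict inequality, consider the one-parameter family $T_\varepsilon(t) := 1 + \cos t + \varepsilon\cos(4t)$, which is a cosine polynomial with spectrum $\{0,1,4\}$ and satisfies $T_\varepsilon(0) = 2 + \varepsilon$; it therefore suffices to show $T_\varepsilon \ge 0$ for all sufficiently small $\varepsilon > 0$. The key point is that $1 + \cos t \ge 0$ vanishes, on one period, only at $t = \pi$, where $\cos(4t) = \cos 4\pi = 1 > 0$. So one fixes $\delta > 0$ small enough that $\cos(4t) \ge \tfrac12$ for $|t - \pi| \le \delta$: on that arc $T_\varepsilon(t) \ge 0 + \varepsilon/2 > 0$, whereas on the complementary arc $1 + \cos t \ge 1 - \cos\delta =: c_\delta > 0$, so $T_\varepsilon(t) \ge c_\delta - \varepsilon \ge 0$ provided $0 < \varepsilon \le c_\delta$. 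Hence $T_\varepsilon \in \PP$ for such $\varepsilon$, and $T_\varepsilon(0) = 2 + \varepsilon > 2$ gives the claim.

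For the explicit value we pass first to an algebraic formulation. Writing $x = \cos t$ and using $\cos(4t) = 8x^4 - 8x^2 + 1$, a polynomial $T(t) = 1 + a\cos t + b\cos(4t)$ lies in $\PP$ exactly when
\[
P(x) := (1+b) + a\,x - 8b\,x^2 + 8b\,x^4 \ge 0 \qquad \text{for all } x \in [-1,1],
\]
and then $T(0) = P(1) = 1 + a + b$; thus we maximize $P(1)$ over this two-parameter family of quartics. Following the extremal/linear-programming reasoning of \cite[\S 2]{R-Szeged}, one is led to seek the extremal $P_0$ as a nonnegative quartic that touches $0$ at a \emph{single} interior point $x_0 \in (-1,1)$ with a double zero there; matching the structural coefficients (the absence of an $x^3$ term, and the $x^2$ and $x^4$ coefficients being $\mp 8b_0$) forces
\[
P_0(x) = 8b_0\,(x - x_0)^2\,\bigl(x^2 + 2x_0\,x + 3x_0^2 - 1\bigr),
\]
which expresses $a_0$ and $b_0$ as explicit rational functions of $x_0$ (namely $a_0 = 8b_0(2x_0 - 4x_0^3)$ and $b_0 = (24x_0^4 - 8x_0^2 - 1)^{-1}$). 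Imposing stationarity of $P_0(1) = 1 + a_0 + b_0$ as a function of $x_0$ yields an algebraic equation whose relevant root is $x_0 \approx -0.809$; substituting back gives $a_0 = 0.989286995\ldots$, $b_0 = 0.246780732\ldots$ and $P_0(1) = 2.236\ldots$, the asserted value.

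It remains to confirm that the resulting $T_0$ is genuinely admissible. With the extremal values inserted, $P_0$ factors as above with $x_0 \approx -0.809$, so $x_0^2 > 1/2$; hence the discriminant $4(1 - 2x_0^2)$ of the quadratic factor is negative and that factor is positive on all of $\R$, while $b_0 > 0$; therefore $P_0 \ge 0$ on $[-1,1]$ (indeed on $\R$), i.e. $T_0 \in \PP$. (If one prefers not to rely on the exact factorization, $P_0 \ge 0$ on $[-1,1]$ can instead be checked directly by evaluating $P_0$ at its at most three real critical points and at $x = \pm 1$.) The main obstacle is precisely the rigorous underpinning of this last step together with the justification of the structure of the extremal: the stationarity equation for $x_0$ has no convenient closed form, so certifying the quoted digits and the nonnegativity of $P_0$ requires either verified (interval-arithmetic) computation or an explicit monotonicity/sign analysis. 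We stress, however, that only the bound ``$>2$'' from the first part is used in the sequel, $T_0$ being recorded merely as an explicit near-optimal witness.
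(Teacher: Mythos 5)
Your proposal is correct, but it reaches the lemma by a genuinely different route than the paper. For the bound $\sup>2$ you use the perturbation $1+\cos t+\ve\cos(4t)$ (which is in fact the argument the paper gives just \emph{before} the lemma, inside the proof of Example 5.1), whereas the paper's lemma proof gets $>2$ from an exact evaluation inside a proven extremal family: it invokes the structural result \cite[Lemma 2.3]{R-Szeged}, which says the linear functional $T\mapsto T(0)$ attains its maximum on $\PP$ on an explicitly parametrized one-parameter family $h_z$, and then computes $h_0(0)=1+\frac{16}{15}+\frac{1}{15}=2+\frac{2}{15}>2$ exactly, before optimizing numerically in $z$ (optimal $z=0.628\ldots$, which corresponds to your $x_0\approx-\cos z\approx-0.809$). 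For the quantitative part you instead substitute $x=\cos t$, $\cos 4t=8x^4-8x^2+1$, posit a double interior zero, and match coefficients to get the family $P_0(x)=8b_0(x-x_0)^2(x^2+2x_0x+3x_0^2-1)$ with $b_0=(24x_0^4-8x_0^2-1)^{-1}$, $a_0=8b_0(2x_0-4x_0^3)$ (your algebra here checks out); the payoff is that nonnegativity of the exhibited witness is certified self-containedly by the factorization (negative discriminant of the quadratic factor since $x_0^2>\frac12$, and $b_0>0$), so you never need the external lemma from \cite{R-Szeged} for the verification, only as a heuristic to locate the candidate --- which is all the lemma requires, since ``approximately extremal'' is not a claim of optimality. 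What the paper's route buys in exchange is an exact rational lower bound $2+\frac{2}{15}$ with no $\ve$-argument, and the knowledge that the numerical search is confined to a family on which the true extremum is guaranteed to lie. Both proofs ultimately rely on a numerical step for the quoted digits (the paper states explicitly that the optimum in $z$ was found numerically), and your acknowledgment of that is on the same footing as the paper's; the only small gap worth flagging is that identifying the printed decimal coefficients of $T_0$ with values of your parametric formulas at some admissible $x_0$ is itself a (routine) numerical verification, which you should state as such rather than leave implicit in ``with the extremal values inserted''.
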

\begin{proof} First note that $F(T):=T(0)=1+a+b$ is a linear functional on $C(\TT)$, whence according to \cite[Lemma 2.3]{R-Szeged} its maximum on $\PP$ is attained on some cosine polynomial from the set ${\mathcal Z}:=\{1-\cos(4t)\} \cup {\mathcal Z}_0 \cup {\mathcal Z}_\pi$ with
\begin{align*}
{\mathcal Z}_0:= & \{h_z(t):=1+a(z)\cos t +b(z) \cos(4t) ~:~ 0\le z\le\pi/4\}, \qquad{\rm where} \\  &a(z):=\frac{4\sin(4z)}{d(z)},\quad b(z):=\frac{\sin z}{d(z)},\quad d(z):=4 \cos z \sin(4z)-\cos(4z)\sin z,
\end{align*}
(interpreting the coefficients of $h_0$ by their limits as $z\to 0$), and ${\mathcal Z}_\pi:= \{h_z(x+\pi)=1-a(z)\cos t +b(z) \cos(4t) ~:~ h_z \in {\mathcal Z}_0\}$.

Here it is immediate that $\max F$ on $\PP$ is positive, whence is not attained on $1-\cos(4t)$; also, as the expression for $a(z)$ is nonnegative for all $0\le z\le \pi/4$, it is clear that $h_z(0)\ge h_z(\pi)$ and the maximum of the functional $F$ on $\PP$ is attained on ${\mathcal Z}_0$. On this set
$$
F(h_z)=h_z(0)=1+a(z)+b(z)= 1+\dfrac{4 \sin{(4z)} +\sin z}{4\cos z \sin{(4z)} - \cos{(4z)} \sin z}, $$
so in particular $F(h_0)=h_0(0)=1+\frac{16}{15}+\frac{1}{15}=2\frac{2}{15}>2$ already. We could only find the extremum numerically: the optimal value is $z=0.628...$ where we get the above.
 \eop \end{proof}

\medskip
Using the above Lemma \ref{l:trinom} we finally find $\DD(Q) \ge \CCC(Q,\emptyset) = 2.23606...>2$, as claimed.
 \eop

In view of the above the reader may have doubts what the ``measure zero correction'' can achieve. We now formally state and prove what one can certainly do in this regard. Although the proof is standard, it is quite tedious and more complicated than one would expect it after such a clear heuristical meaning.

\begin{proposition}\label{prop:smallcorrection} Assume that the set $H\in\B$ packs $G$ with the translation set $\Lambda$. Then there exists $M\subset H$ with $m_G(H\setminus M)=0$, such that $M$ packs $G$ with $\La$ in the strict sense.
\end{proposition}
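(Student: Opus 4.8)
The plan is to build $M$ by retaining only the points of $H$ at which $H$ has full ``local density'', measured via convolution with an approximate identity. Fix a sequence $(k_n)$ in $C_c(G)$ with $k_n \ge 0$, $\int_G k_n \, dm_G = 1$ for all $n$, and $\supp k_n$ shrinking to $\{0\}$. Since $H \in \B$, $\chi_H \in L^1(G)$, so $\chi_H * k_n \to \chi_H$ in $L^1(G)$; passing to a subsequence we may assume in addition that $\chi_H * k_n \to \chi_H$ $m_G$-almost everywhere. Now put
$$
M := \{\, x \in H \;:\; (\chi_H * k_n)(x) \to 1 \,\}.
$$
Then $M \subseteq H$, while $H \setminus M$ is a Borel set contained in the $m_G$-null set where $\chi_H * k_n \not\to \chi_H$, so $m_G(H \setminus M) = 0$. (When $m_G(H) = 0$ this simply yields $M = \emptyset$, which is fine.) It remains to check that $M$ packs $G$ with $\Lambda$ in the strict sense, which by \eqref{stricpacking} means $(M - M) \cap (\Lambda - \Lambda) = \{0\}$.

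The first ingredient is the elementary consequence of the a.e.\ packing hypothesis that $m_G\bigl(H \cap (H+d)\bigr) = 0$ for every $d \in (\Lambda - \Lambda)\setminus\{0\}$. Writing $d = \lambda' - \lambda$ with $\lambda \ne \lambda'$ in $\Lambda$, any $y \in H \cap (H+d)$ produces the point $x := y + \lambda$ with $x - \lambda = y \in H$ and $x - \lambda' = y - d \in H$, so $x$ lies in at least two of the translates $H + \mu$ ($\mu \in \Lambda$); hence $y = x - \lambda$ belongs to $B - \lambda$, where $B$ is the $m_G$-null set of points covered at least twice, which is excluded by \eqref{packing}. As $H \cap (H+d)$ is Borel and contained in the null set $B - \lambda$, it has measure zero.

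Now suppose, towards a contradiction, that $d = m_1 - m_2$ with $m_1, m_2 \in M$ and $d \in (\Lambda - \Lambda)\setminus\{0\}$. Since $m_1 \in M$, we have $(\chi_H * k_n)(m_1) \to 1$; since $m_2 = m_1 - d \in M$ and $(\chi_H * k_n)(m_2) = (\chi_{H+d} * k_n)(m_1)$ by the substitution $\chi_{H+d}(\cdot) = \chi_H(\cdot - d)$, we also have $(\chi_{H+d} * k_n)(m_1) \to 1$. From the pointwise inequality $\chi_{H \cap (H+d)} \ge \chi_H + \chi_{H+d} - 1$, convolving with $k_n \ge 0$ (total mass $1$) and evaluating at $m_1$,
$$
(\chi_{H \cap (H+d)} * k_n)(m_1) \;\ge\; (\chi_H * k_n)(m_1) + (\chi_{H+d} * k_n)(m_1) - 1 \;\longrightarrow\; 1 .
$$
But the first ingredient gives $m_G(H \cap (H+d)) = 0$, so $\chi_{H \cap (H+d)} = 0$ $m_G$-a.e.\ and therefore $\chi_{H \cap (H+d)} * k_n \equiv 0$ on $G$, a contradiction. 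Hence $(M - M) \cap (\Lambda - \Lambda) = \{0\}$ and $M$ packs $G$ with $\Lambda$ in the strict sense.

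The only point calling for care is to make sure the ``elementary'' facts hold without a $\sigma$-finiteness assumption on $m_G$: one needs that $\chi_H * k_n \to \chi_H$ in $L^1(G)$ (standard for approximate identities on LCA groups), that $L^1$-convergence passes to a.e.\ convergence along a subsequence (true in any measure space, by summing a rapidly convergent subsequence), and that ``a.e.\ equal to $\chi_H$'' is harmless for subsets of the finite-measure set $H$. With these in place the argument is robust; this is presumably the ``standard but tedious'' proof alluded to in the statement.
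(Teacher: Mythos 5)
Your argument is correct, and it takes a genuinely different --- and considerably more elementary --- route than the paper's. The paper proves Proposition~\ref{prop:smallcorrection} by removing from $H$ an explicit ``correction set'': it invokes the Steinhaus property of positive-measure sets, the compactly generated open subgroup $G_0=\langle \overline{H}-\overline{H}\rangle$, the structure Lemma~\ref{structure-lemma} (lattice plus tile), discreteness of $\Lambda$ (Lemma~\ref{Lambda-discrete-lemma}), outer regularity to enclose the exceptional set of \eqref{packing} in a small open set, and a coset decomposition to reduce to countably many bad pairs $\lambda\ne\lambda'$, so that $C=\bigcup (H\cap(\lambda-\lambda'+H))$ is a countable union of null sets. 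You instead keep exactly the points of $H$ of ``full approximate density'', $M=\{x\in H: (\chi_H*k_n)(x)\to 1\}$, and the strict condition $(M-M)\cap(\Lambda-\Lambda)\subseteq\{0\}$ drops out of the pointwise inequality $\chi_{H\cap(H+d)}\ge \chi_H+\chi_{H+d}-1$ convolved with $k_n$, combined with the elementary fact that $m_G(H\cap(H+d))=0$ for $d\in(\Lambda-\Lambda)\setminus\{0\}$ (your derivation of this from the a.e.\ hypothesis \eqref{packing} is the same null-set observation the paper uses). What your approach buys: no structure theory, no discreteness or countability considerations on $\Lambda$, no case distinction $m_G(H)>0$, and a canonical Borel choice of $M$; it is essentially a Lebesgue-density-point argument with the approximate identity serving as the substitute for a differentiation theorem on a general LCA group. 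One small presentational caveat: on a non-metrizable $G$ there is no sequence of supports literally ``shrinking to $\{0\}$'', but your proof never uses that --- all you need is, for each $n$, some $k_n\in C_c(G)$, $k_n\ge 0$, $\int_G k_n\,dm_G=1$, with $\|\chi_H*k_n-\chi_H\|_{L^1(G)}<1/n$, which exists for the fixed function $\chi_H\in L^1(G)$ by continuity of translation in $L^1$; phrasing the choice of $(k_n)$ this way removes any appeal to a countable neighborhood base and leaves the rest of your argument unchanged.
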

\begin{proof} There is nothing to prove if $m_G(H)=0$, as then $M=\emptyset$ suffices. So let us consider the case when $m_G(H)>0$.  Also assume, as we may, $0\in H$.

Let the exceptional set in \eqref{packing} be $X:=\{ x\in G~:~ \sum_{\lambda \in \Lambda} \chi_H(x - \lambda) > 1\}$, in other words $X=\{ x\in G~:~ \exists \lambda \ne \lambda' \in \Lambda  ~~\textrm{such that}~~x \in (H+\lambda)\cap (H+\lambda')\}$. Let $\ve>0$ be arbitrary. By assumption, $m_G(X)=0$, meaning that for the given $\ve>0$ there exists an open set $U\supset X$ with $m_G(U)<\ve$. So let us fix also the open set $U$.

Recall that $H\in \B$, whence $\overline{H}\Subset G$. Consider the compact set $K:=\overline{H}-\overline{H}$ and the compactly generated subgroup $G_0:=\langle K \rangle$. As $H$ is of positive Haar measure, its difference set---whence $K$, too---contains a neighborhood of $0$, see e.g. \cite[Corollary 20.17]{HewittRossI}. Therefore, $G_0$ is also a neighborhood of 0 and $G_0$ is thus an open subgroup, whence a compactly generated open-closed subgroup, too.

The following fact  follows from \cite[2.4.2. Lemma]{Rudin-book} and its proof, as well as from parts of the proof of  \cite[2.6.7. Theorem]{Rudin-book}, see also the proof of Theorem 7 in \cite{Revesz-2011}.

\begin{lemma} \label{structure-lemma}
Let $K$ be a compact neighborhood of $0$ in $G$, and let $G_0 := \langle K \rangle$ be the subgroup in $G$ generated by $K$. Then \\
1) There is a finitely generated lattice $L$ in $G_0$ (isomorphic to $\Z^d$ with some $d \in \N$) such that $K \cap L = \{0\}$.\\
2) There is a set $E \in {\cal B}_0$ such that $K \subset E \subset G_0$ and each $x \in G_0$ can be uniquely represented as $x = e + \ell$, where $e \in E$ and $\ell \in L$. In particular, $E$ tiles $G_0$ in the strict sense with the translation set $L$.\\
If $G$ is not compact, then necessarily $d \ge 1$.

\end{lemma}

Now let $E$ and $L$ be as in Lemma~\ref{structure-lemma}.  So each translated copy $\ell+E$ with $\ell \in L$ has compact closure, whence even in $\ell + \overline{E}$ there can only be some finitely many points of $\Lambda$ (if we check that $\Lambda$ must be discrete---this will be done in Lemma \ref{Lambda-discrete-lemma} below), and then of course $\Lambda_0:=\Lambda\cap G_0$ is countable. (Note that this may well fail for the whole of $G$.)

The subgroup $G_0$ partitions $G$ into conjugate classes: let us take such a (disjoint) partition $G=\cup_{t\in T}(G_0+t)$ where $T$ is a representative set (chosen using the Axiom of Choice) of inequivalent conjugate classes (translates) of $G_0$ within $G$. Let us then take the decomposition $U= \cup_{t \in T} ((t+G_0)\cap U)$.

Any of these $U(t):=(t+G_0) \cap U $ is the intersection of open sets, whence open, and either has a positive measure, or is empty. As for different $t$ these are also disjoint, there can only be a set $T^* \subset T$ of at most countably many $t$ with $(t +G_0) \cap U \ne \emptyset$---compare \cite[(2.22) Proposition]{Folland}. So, we find $X \subset \cup_{t \in T^*} ((t+G_0) \cap U) = \cup_{i=0}^\infty U_{i}$, with $U_{i}:=((t_i+G_0) \cap U)$ open and $T^*=\{t_i~:~i\in \NN\}$ countable. (We can assume $t_0 \in G_0$ in numbering these representatives of conjugate classes.)

Now let us consider the countable set $\Lambda_0:=\Lambda \cap G_0$ together with its peers $\Lambda_{i}:=\Lambda \cap (t_i+G_0)$ for arbitrary $i \in \NN$. By the same reason as for $\Lambda_0$, all $\Lambda_{i}$ are (at most) countable, whence so is the set $\Lambda^*:= \bigcup_{i=0}^\infty \Lambda_{i}$. Note that for $\lambda \not \in \Lambda^*$ and with $t \in T$ such that $\lambda \in t + G_0$ we have $(\lambda + G_0) \cap X = (t + G_0) \cap X \subset (t + G_0) \cap U = \emptyset$ by construction.

Taking the set $Y:=\cup_{i \in \NN} \cup_{\lambda\ne\lambda' \in \Lambda_{i}} (\lambda-\lambda' +H)$, the whole union is an at most countable union, while each member $H \cap (\lambda-\lambda' +H)$ is obviously of measure zero (since $(H+\lambda)\cap (H+\lambda') \subset X$). So finally defining $C:=H \cap Y = \cup_{i \in \NN} \cup_{\lambda\ne\lambda' \in \Lambda_{i}} \left( H \cap (\lambda-\lambda' +H)\right)$ as ``correction set'', this is a countable union of measure zero sets and is thus of measure zero. Therefore, $M:=H\setminus C$ has $m_G(M)=m_G(H)$.

Consider now any $z \in (\Lambda-\Lambda) \cap (H-H)$, and assume that $z\ne 0$: we want to show that then $z \not\in (\Lambda-\Lambda) \cap (M-M)$. By condition, $z=\lambda-\lambda'=h-h'$ with $\lambda \ne \lambda' \in \Lambda$, $h \ne h' \in H$. Take $x:=z+\lambda'+h'=\lambda+h' = \lambda'+h$. Then with the different $\lambda \ne \lambda'$ we find $\chi_H (x-\lambda)=\chi_H (x-\lambda')=1$ and $\sum_{\lambda \in \Lambda} \chi_H (x-\lambda) >1$. This means that $x\in X \subset U = \cup_{i=1}^\infty U_i$. So let $x \in U_{i}$ for some $i\in \NN$. Then $x \in t_i+G_0$, so that $\lambda'=x-h$ and also $\lambda=x-h'$ belong to the same coset $t_i+G_0$, i.e. $\lambda, \lambda ' \in \Lambda_{i}$.  (Here we have used the inessential assumption that $0\in H$; otherwise there would occur some other coset $t_{j}+G_0$ and the corresponding $\Lambda_j$ here.) Therefore, $\lambda-\lambda'+h'=h$ belongs to $(\lambda-\lambda'+H)\cap H \subset C$ and so $h \not \in M =H\setminus C$. It follows that in the representation $z=h-h'$ for $z$ we cannot have $h \in M$, whence $z \not\in (\Lambda-\Lambda) \cap (M-M)$, as wanted. This proves $(\Lambda-\Lambda) \cap (M-M) \subset \{0\}$.
 \eop \end{proof}

\medskip

We have already used the basic observation that a translational set $\Lambda$ (of a packing by translates of some $H\in \B$ of positive measure) is necessarily discrete, i.e. to any point $g\in G$ there is a neighborhood $U$ of $g$ with $\Lambda \cap U \subset \{g\}$. (Equivalently, we may say that $\Lambda\cap U$ is finite, or, equivalently again, we may formulate discreteness with postulating that $\Lambda \cap K$ is finite for every compact set $K\Subset G$.) So we prove this now.

\begin{lemma} \label{Lambda-discrete-lemma}
Let the open set $0\in W \subset G$ satisfy a strict packing type condition with the set $\Lambda$, i.e. assume $W \cap(\La-\La)=\{0\}$. Then $\Lambda$ is discrete.
\end{lemma}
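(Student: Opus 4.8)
The plan is to use only the continuity of the group operations near the origin together with the strict packing type hypothesis. First I would pick an open neighborhood $V$ of $0$ with $V-V\subseteq W$. Such a $V$ exists: since $W$ is open and $0=0-0\in W$, continuity of the map $(x,y)\mapsto x-y$ at $(0,0)$ yields open neighborhoods $V_1,V_2$ of $0$ with $V_1-V_2\subseteq W$, and $V:=V_1\cap V_2$ works; equivalently one may invoke the standard fact that in a topological group every neighborhood of the identity contains a symmetric neighborhood $V$ of the identity with $V+V\subseteq W$, for which $V-V=V+V\subseteq W$.

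The key observation is then that any two points of $\Lambda$ whose difference lies in $V-V$ must coincide: indeed $(V-V)\cap(\Lambda-\Lambda)\subseteq W\cap(\Lambda-\Lambda)=\{0\}$ by hypothesis. Geometrically this says that the translates $\{\lambda+V:\lambda\in\Lambda\}$ are pairwise disjoint, since a common point of $\lambda+V$ and $\lambda'+V$ would force $\lambda-\lambda'\in V-V$, hence $\lambda=\lambda'$.

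To conclude discreteness, fix an arbitrary $g\in G$ and look at $g+V$. If $\lambda,\lambda'\in\Lambda\cap(g+V)$, then $\lambda-\lambda'\in(g+V)-(g+V)=V-V$, so $\lambda=\lambda'$; hence $\Lambda\cap(g+V)$ contains at most one point. If $g\in\Lambda$, that point is $g$ itself (since $0\in V$) and $U:=g+V$ is the required neighborhood. If instead the unique point is some $\lambda\neq g$, then, using that an LCA group is Hausdorff so that $\{\lambda\}$ is closed, the set $U:=(g+V)\setminus\{\lambda\}$ is an open neighborhood of $g$ disjoint from $\Lambda$. Either way $\Lambda\cap U\subseteq\{g\}$, which is precisely the asserted discreteness; the alternative formulations ($\Lambda\cap K$ finite for every compact $K\Subset G$, etc.) then follow from local compactness in the usual way.

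I do not anticipate a genuine obstacle: the argument is short and elementary. The only points needing a moment of care are the construction of $V$ with $V-V\subseteq W$ and the minor bookkeeping at points $g\notin\Lambda$, both entirely routine.
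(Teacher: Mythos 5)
Your proof is correct, but it takes a genuinely different route from the paper. You argue purely topologically: choosing an open $V\ni 0$ with $V-V\subseteq W$, the hypothesis $W\cap(\Lambda-\Lambda)=\{0\}$ forces the translates $\lambda+V$, $\lambda\in\Lambda$, to be pairwise disjoint, i.e.\ $\Lambda$ is uniformly discrete, and the pointwise statement (every $g\in G$ has a neighborhood $U$ with $\Lambda\cap U\subseteq\{g\}$) follows at once; no Haar measure is involved. The paper instead chooses $V$ with \emph{compact closure} and $V-V\subset W$ and runs a measure-counting argument: for any compact $C\Subset V$ of positive measure and any compact $K$, integrating $\sum_{\lambda}\chi_C(x-\lambda)$ over $C+K$ gives the quantitative bound $\#(\Lambda\cap K)\le m_G(C+K)/m_G(C)<\infty$. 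That version directly delivers the formulation actually used later in the paper (finiteness of $\Lambda\cap K$ for every compact $K$, e.g.\ for $\Lambda\cap(V_N+z)$ in the proof of Theorem \ref{main-theorem}), and its flavor matches the density estimates of Section \ref{sec:auud}. Your approach is more elementary and in fact yields the stronger uniform separation; the only step you leave implicit, the passage to ``$\Lambda\cap K$ finite for all $K\Subset G$,'' is indeed routine from your disjointness statement (cover $K$ by finitely many translates $x_i+V$, each containing at most one point of $\Lambda$), and it would be worth writing that one line out, since that is the form of discreteness the rest of the paper relies on.
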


\begin{proof}
Consider an arbitrary $K\Subset G$ and a set of $\ell$ different points $\lambda_1,\dots,\lambda_\ell$ in $\Lambda \cap K$. Consider an open neighborhood $V$ of zero, with compact closure, such that $V-V \subset W$. (Such $V$ exists, for $0\in\intt W$ and the group operation is continuous.) The strict packing type condition on $W$ and $\La$ implies that $V$ satisfies the strict packing condition \eqref{stricpacking}, i.e. $(V-V) \cap (\La-\La) =\{0\}$.

Therefore, we also have for any compact subset $C \Subset V$ that
\begin{align*}
m_G(C + K) & \ge \int_{C + K} \sum_{\lambda \in \Lambda} \chi_V(x - \lambda) dx \ge \int_{C + K} \sum_{j=1}^\ell \chi_C(x -\lambda_j) dx \notag \\ & = \sum_{j=1}^\ell \int_{C + K} \chi_C(x - \lambda_j) dx \ge \sum_{j=1}^\ell \int_{C + \lambda_j} \chi_C(x - \lambda_j) dx = \ell m_G(C).
\end{align*}
Note that $C$ and $K$ being compact, so is $C + K$, whence its Haar measure is finite. This shows that for an arbitrary $C \Subset V$ with $m_G(C)>0$ we must have $\ell \le\frac{m_G(C + K)}{m_G(C)} <\infty$, and $\ell$ is bounded.
 \eop \end{proof}

Our next aim is to calculate ${\cal C}(\Omega_+,\Omega_-)$ in the case when $\Omega_+$ is a difference set of a strict lattice tile with a finitely generated lattice.


\begin{proposition} \label{tile-theorem}
Let $G$ be a LCA group. Suppose that $\Omega_+, \Omega_-$ are open, $0$-symmetric sets and
$$
\Omega_+ = H - H,
$$
where $H \in {\cal B}_0$ and $H$ tiles $G$ in the strict sense of {\rm (\ref{stricpacking})}  with the translation set $\Lambda \subset G$ which is a finitely generated lattice.
Then
$$
{\cal C}(\Omega_+, \Omega_-) = m_G(H).
$$
\end{proposition}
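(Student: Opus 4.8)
The plan is to prove the two inequalities ${\cal C}(\Omega_+,\Omega_-)\le m_G(H)$ and ${\cal C}(\Omega_+,\Omega_-)\ge m_G(H)$ separately: the lower bound by an explicit construction, the upper bound by transporting the problem to the compact quotient $G/\Lambda$ via the homomorphism machinery of Section~\ref{sec:homom}. For the lower bound I would fix $\ve>0$ and, using inner regularity of $m_G$ (legitimate since $\overline H$ is compact, so $m_G(H)<\infty$), choose a compact $K\Subset H$ with $m_G(H\setminus K)<\ve$. The function $f:=m_G(K)^{-1}\chi_K*\widetilde{\chi_K}$ is continuous (a convolution of two compactly supported $L^2$ functions), positive definite (being of the form $g*\widetilde g$), non-negative, satisfies $f(0)=1$, and is supported in the compact set $K-K\subseteq H-H=\Omega_+$; hence $\supp f_+\Subset\Omega_+$ and $\supp f_-=\emptyset\Subset\Omega_-$, so $f\in{\cal F}(\Omega_+,\Omega_-)$, with $\int_G f=m_G(K)>m_G(H)-\ve$. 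Letting $\ve\to0$ gives the lower bound. (Passing to $K$ rather than $H$ itself is genuinely needed, since $\supp(\chi_H*\widetilde{\chi_H})$ can meet the boundary of the \emph{open} set $\Omega_+$, as the triangle function on $\RR$ shows.)

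For the upper bound I would first record that $\Lambda$ is discrete (Lemma~\ref{Lambda-discrete-lemma}), hence a closed subgroup, so the quotient $G/\Lambda$ and the canonical open projection $\pi\colon G\to G/\Lambda$ are available; moreover $G/\Lambda$ is compact, because $\overline H+\Lambda$ is closed (compact plus discrete) and of full $m_G$-measure (as $H$ covers $G$ a.e.), hence equals $G$, so $\pi(\overline H)=G/\Lambda$. Normalizing $m_\Lambda$ to be counting measure, the compatible Haar measure $m_{G/\Lambda}$ (with $dm_G=dm_\Lambda\,dm_{G/\Lambda}$) has total mass $m_{G/\Lambda}(G/\Lambda)=m_G(H)$, by Weil's unfolding formula applied to $\chi_H$ together with the tiling identity $\sum_{\lambda\in\Lambda}\chi_H(\cdot-\lambda)=1$ a.e. Then I would apply Theorem~\ref{homomorphism-theorem} with $\varphi=\pi$; since $M=1$ for the quotient map (this is exactly the last corollary of Section~\ref{sec:homom}), it yields
$$
{\cal C}_G(\Omega_+,\Omega_-)\le {\cal C}_{G/\Lambda}\big(\pi(\Omega_+),\pi(\Omega_-)\big)\cdot{\cal C}_\Lambda\big(\Omega_+\cap\Lambda,\,\Omega_-\cap\Lambda\big).
$$
It then remains to bound the two factors. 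By the strict packing identity (\ref{stricpacking}) we have $\Omega_+\cap\Lambda=(H-H)\cap\Lambda=\{0\}$, which is open and $0$-symmetric in the discrete group $\Lambda$; and for any $f\gg0$ on $\Lambda$ with $f(0)=1$ and $\supp f_+\subseteq\{0\}$ one has $\int_\Lambda f=\sum_{\lambda\in\Lambda}f(\lambda)\le f(0)=1$ (all off-origin values being $\le0$), so ${\cal C}_\Lambda(\Omega_+\cap\Lambda,\Omega_-\cap\Lambda)\le1$ (with equality, via $\delta_0$). On the compact group $G/\Lambda$, monotonicity and the elementary fact ${\cal T}(G/\Lambda)=m_{G/\Lambda}(G/\Lambda)$ give ${\cal C}_{G/\Lambda}(\pi(\Omega_+),\pi(\Omega_-))\le m_{G/\Lambda}(G/\Lambda)=m_G(H)$. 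Multiplying, ${\cal C}_G(\Omega_+,\Omega_-)\le m_G(H)$.

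All steps are assembled from tools already available, so there is no single hard obstacle; the points demanding care are the LCA-group bookkeeping — that $\Lambda$ is closed and $\pi$ is an \emph{open} homomorphism (both needed for Theorem~\ref{homomorphism-theorem}), that $\pi(\Omega_\pm)$ and $\Omega_+\cap\Lambda$ really are admissible open $0$-symmetric sets, and, above all, that the Haar-measure normalizations are tracked consistently so that the right-hand side comes out as exactly $m_G(H)$ rather than a stray constant multiple of it. As alternatives for the upper bound one could periodize a given $f\in{\cal F}(\Omega_+,\Omega_-)$ to a positive definite $\widetilde P$ on $G/\Lambda$ with $\widetilde P(0)=\sum_{\lambda}f(\lambda)\le f(0)=1$ and $\int_{G/\Lambda}\widetilde P=\int_G f$, and invoke that a positive definite function on a compact group has integral at most its value at $0$ times the total mass; or one could quote Theorem~\ref{main-theorem}, using that $\Omega_+=H-H$ satisfies the strict packing type condition of Definition~\ref{def:genpack} with $\Lambda$ and that $\overline{D}^{\#}(\Lambda)=1/m_G(H)$ for a tiling lattice.
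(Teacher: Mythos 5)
Your proof is correct, and while your lower bound is exactly the paper's construction ($\chi_A*\widetilde{\chi_A}$ for compact $A\Subset H$, normalized, plus inner regularity), your upper bound takes a genuinely different route. The paper works directly on $G$: it periodizes $f$ over $\Lambda$, $F(x):=\sum_{\lambda\in\Lambda}f(x+\lambda)$, proves $F\gg 0$ by an explicit Fej\'er-type averaging over $\{0,\dots,N\}^d$ (this is precisely where the hypothesis that $\Lambda$ is a \emph{finitely generated} lattice, $\Lambda\cong\Z^d$, is exploited), and then combines $\int_H F=\int_G f$ (tiling), $F\le F(0)$ and $F(0)\le f(0)=1$ (strict packing) to get $\int_G f\le m_G(H)$ without ever forming a quotient group. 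You instead route everything through the homomorphism machinery of Section~\ref{sec:homom}: after checking that $\Lambda$ is discrete (Lemma~\ref{Lambda-discrete-lemma}), hence closed, that $G/\Lambda$ is compact (via $\overline H+\Lambda=G$), and that the Weil normalization gives $m_{G/\Lambda}(G/\Lambda)=m_G(H)$, the last corollary of Section~\ref{sec:homom} with $\varphi=\pi$ reduces the claim to the two trivial bounds ${\cal C}_{G/\Lambda}(\pi(\Omega_+),\pi(\Omega_-))\le m_{G/\Lambda}(G/\Lambda)$ and ${\cal C}_\Lambda(\{0\},\Omega_-\cap\Lambda)\le 1$. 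The two arguments are cousins --- the proof of Theorem~\ref{homomorphism-theorem} is itself a periodization, with positive definiteness obtained from nonnegativity of the Fourier transform rather than Fej\'er averaging --- but your packaging buys something: it reuses an already proved theorem instead of redoing the averaging argument, and it never uses finite generation of $\Lambda$ (only discreteness, which is automatic from \eqref{stricpacking}, and compactness of the quotient, which comes from the covering half of tiling), so it in fact proves a slightly more general statement. The price is the extra LCA bookkeeping you correctly flag ($\Lambda$ closed, $\pi$ open, compactness of $G/\Lambda$, consistency of the three Haar measures), none of which the paper's direct argument needs. One caveat: your last suggested alternative --- deducing the upper bound by quoting Theorem~\ref{main-theorem} together with $\overline{D}^{\#}(\Lambda)=1/m_G(H)$ --- would be circular within the paper's development, since the proof of Theorem~\ref{main-theorem} uses Proposition~\ref{tile-theorem} as an essential auxiliary; your main route and your periodization alternative are free of this problem.
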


The result above is quite special. In principle, we are interested in connecting the Delsarte constant and the packing density in the more general situation when the positivity set $\Omega_+$ is not supposed to have the structure of a difference set like in Proposition~\ref{tile-theorem}, and when the translational set is not assumed to have the structure of a finitely generated lattice. Instead, we only request that $\Omega_+$ satisfies the strict packing type condition and $\Lambda$ has only an asymptotic uniform upper density. Finally the much stronger result---Theorem~\ref{main-theorem}---will be proved about this general situation, but Proposition ~\ref{tile-theorem} will be an indispensable auxiliary result in our argumentation.

The statement of Proposition~\ref{tile-theorem} is known for the Tur\'an problem \eqref{Turan-problem}. In the special case when $G = \R^d$ and  $\Omega$ is itself a convex lattice tile (so that $H = \Omega/2$ can be taken in the proposition), the statement was proved in  \cite{arestov:tiles} (see also \cite{hexagon}). The same follows from \cite{kolountzakis:turan} where the result was obtained for convex $\Omega$ that are spectral (which is the case for all convex tiles). The analogous  proposition in the general form as below for Tur\'an's problem in compact Abelian groups as well as in the groups $\R^d$ and $\Z^d$ was obtained in \cite{KR-2006}. Finally, it was proved for locally compact Abelian groups in  \cite{Revesz-2011}.

%

\bigskip

{\bf Proof of Proposition~\ref{tile-theorem}.}
Since the constant ${\cal C}(\Omega_+, \Omega_-)$ is monotone in the second argument $\Omega_-$, we have
$$
{\cal C}(\Omega_+, \emptyset) \le {\cal C}(\Omega_+, \Omega_-) \le {\cal C}(\Omega_+, G).
$$
The estimate ${\cal C}(\Omega_+, \emptyset)  \ge m_G(H)$ has been proved in \cite[Corollary 5]{Revesz-2011}. For completeness, first we briefly repeat here the construction that shows this inequality, as it was done in the proof of \cite[Corollary 5]{Revesz-2011}: then our proof will be completed by proving $m_G(H) \ge {\cal D}(\Omega_+) = {\cal C}(\Omega_+, G)$ below.

The proof of the inequality ${\cal C}(\Omega_+, \emptyset)  \ge m_G(H)$ uses an old idea how to construct a function giving a lower bound in the investigation of the Tur\'an constant. Assume as we may that $m_G(H) > 0$. Let $A \Subset H$ with $m_G(A) > 0$. Consider the function $f := \chi_A * \widetilde{\chi_A}$. Then $f \gg 0$, $f \ge 0$ on $G$ and $\supp{f} \subset A - A \Subset H - H \subset \Omega_+$. Clearly, $f(0) = m_G(A)$ and $\int_G f  = m_G(A)^2$. Thus, $f_0 := \frac{1}{f(0)}f \in \FC(\Omega_+,\emptyset)$, whence ${\cal C}(\Omega_+, \emptyset) \ge \int_G f_0 = m_G(A)$. Since $H$ is a Borel set, its measure can be approximated arbitrarily closely by measures of inscribed compact sets $A$. Taking the supremum over all such sets $A$, we obtain the desired estimate.

Lastly, we prove $m_G(H) \ge {\cal D}(\Omega_+)$.
We take an arbitrary $f \in \FC(\Omega_+,G)$. Denote $W := \supp{f} \Subset G$.
Let us consider the function
$$
F(x) := \sum_{\lambda \in \Lambda} f(x + \lambda), \qquad x \in G.
$$
The sum is well-defined, for each $x \in G$ only finitely many summands are non-zero, because if $f(x+\lambda)\ne 0$ then $x+\lambda \in W$, too, i.e. $\lambda \in W-x$, which is a compact set while $\Lambda$ is discrete.

Further, the set $L \subset \Lambda$ of points $\lambda\in \Lambda$ with the additional property that $x+\lambda \in W$ for some $x \in H$ must be a finite set. Indeed, if $x+\lambda \in W$, then $\lambda \in W-H \subset W-\overline{H}$; but in view of the assumption $H \in \B$, the latter is compact and so by discreteness $\Lambda \cap (W-H)$ must be finite. So in particular $f(x+\lambda)=0$ for all $x\in H$ and for any $\lambda \in \Lambda \setminus L$. Thus we have that $F(x) = \sum_{\lambda \in L} f(x + \lambda)$ for all $x \in H$.

Next we show that $F$ is positive definite. We need to show that for all $M \in \N$, $x_1, \ldots, x_M \in G$ and $c_1, \ldots, c_M \in \C$ we have
$$
\sum_{k =1}^M \sum_{k'=1}^M c_k \, \overline{c_{k'}} \, F(x_k - x_{k'})
= \sum_{k =1}^M \sum_{k'=1}^M c_k \, \overline{c_{k'}} \, \sum_{\lambda \in \Lambda} f(x_k - x_{k'} + \lambda) \ge 0.
$$
Let the lattice $\Lambda$ be generated by the elements $a_1, \ldots, a_d \in G$, i.e., $\Lambda =  \{\lambda_\nu := \nu_1 a_1 + \cdots + \nu_d a_d : \nu = (\nu_1,\ldots,\nu_d) \in \Z^d \}$. Since there are finitely many points $x_k - x_{k'}$ in the sum above and $f$ is compactly supported, there is a finite set $I \subset \Z^d$ such that
\be \label{F5}
\sum_{k =1}^M \sum_{k'=1}^M c_k \, \overline{c_{k'}} \, F(x_k - x_{k'})
= \sum_{k =1}^M \sum_{k'=1}^M c_k \, \overline{c_{k'}} \, \sum_{\mu \in I} f(x_k - x_{k'} + \lambda_\mu).
\ee
We have to show that this expression is non-negative.

Take $N \in \N$ such that $I \subset \{-N,\ldots,N\}^d$. Using the property of the positive definiteness of $f$ for the $(N+1)^d \, M$ points $x_k + \lambda_\nu$, $\nu \in \{0, \ldots, N\}^d$, $k = 1, \ldots, M$, and taking the coefficient corresponding to a point $x_k + \lambda_\nu$ to be $c_k$, we obtain
\begin{eqnarray*}
0 &  \le & \sum_{k=1}^M \sum_{k'=1}^M  \sum_{\nu  \in \{0, \ldots, N\}^d}  \sum_{\nu'  \in \{0, \ldots, N\}^d}
c_k \, \overline{c_{k'}} \, f(x_k + \lambda_\nu - x_{k'} - \lambda_{\nu'})
\\
& = &  \sum_{k=1}^M \sum_{k'=1}^M c_k \, \overline{c_{k'}} \sum_{\mu  \in \{-N, \ldots, N\}^d} \prod_{j=1}^d (N + 1 - |\mu_j|) \, f(x_k - x_{k'} + \lambda_\mu)
\\
& = &  \sum_{k=1}^M \sum_{k'=1}^M c_k \, \overline{c_{k'}} \sum_{\mu  \in I} \prod_{j=1}^d (N + 1 - |\mu_j|) \, f(x_k - x_{k'} + \lambda_\mu).
\end{eqnarray*}
It follows that
$$
\sum_{k=1}^M \sum_{k'=1}^M c_k \, \overline{c_{k'}} \sum_{\mu  \in I} \frac{\prod_{j=1}^d (N + 1 - |\mu_j|)}{(N + 1)^d}  \, f(x_k - x_{k'} + \lambda_\mu)
\ge 0.
$$
Taking limit when $N \to \infty$, we obtain the desired statement about the non-negativity of the expression (\ref{F5}).

As $F$ is positive definite, it holds $F(x) \le F(0)$ for all $x \in G$.  Consequently,
\be \label{F1}
\int_H F \le F(0) \, m_G(H).
\ee
For the integral we have
\begin{align}\label{F2}
\int_H F & =  \int_H \sum_{\lambda \in L} f(x + \lambda) ~dm_G(x) = \sum_{\lambda \in L}   \int_H f(x + \lambda) ~dm_G(x) \notag \\  &= \sum_{\lambda \in L}  \int_G \chi_{H+\lambda}(y) f(y) ~dm_G(y) = \int_W \left( \sum_{\lambda \in L}  \chi_{H+\lambda}(y) \right) f(y) ~dm_G(y) \\ \notag & = \int_W \left( \sum_{\lambda \in \Lambda}  \chi_{H+\lambda}(y) \right) f(y) ~dm_G(y) = \int_{W} 1 \cdot f(y) ~dm_G(y) = \int_G f,
\end{align}
using that $H$ tiles with $\Lambda$ so that \eqref{tiling} applies.

As we have the strict packing condition \eqref{stricpacking} we must have
$f(\lambda) \le 0$ for all $\lambda\ne 0$ and $\lambda \in \Lambda$; hence,
\be \label{F3}
F(0) = \sum_{\lambda \in \Lambda} f(\lambda) \le f(0) = 1.
\ee
From (\ref{F1}), (\ref{F2}) and (\ref{F3}) we obtain
$$
\int_G f = \int_H F \le F(0) \, m_G(H) \le m_G(H),
$$
which proves the inequality ${\cal D}(\Omega_+) \le m_G(H)$.
 \eop 


\section{Asymptotic uniform upper density on LCA groups}\label{sec:auud}

Asymptotic uniform upper density was introduced in $\RR$ and $\ZZ$ by Kahane \cite{Kahane1, KahaneThese, KahaneAnnIF, KahaneLivre}, in two equivalent forms. Later, only the second form became widespread. See also \cite{beurling, Landau} and \cite{Groemer}. In particular, it seems that this density was first used in connection with packings by Groemer, as a special case of the characteristics he considered, see  \cite{Groemer}.
The notion was later called after various names like Beurling, Nyquist, even Banach \cite{Landau, GKS, Fusi, Grekos}.

The notion was, however, not extended to LCA groups for long. A concept has been introduced by the second named author \cite{UAUDens}, \cite{dissertation}, 
see also \cite{Revesz-2011}. An equivalent construction occurred also in the paper \cite{GKS}. We already know several equivalent definitions and constructions and the notion seems to be handy for applications \cite{dissertation}.

\begin{definition}\label{def-auud}
Let $G$ be a LCA group, and $m_G$ the Haar measure. Let $\nu$ be another measure on $G$ with the $\sigma$-algebra of measurable sets ${\cal S}$. The asymptotic uniform upper density (a.u.u.d.) of the measure $\nu$ is then defined by
$$
\overline{D}(\nu,m_G) := \inf_{C \Subset G} { \sup_{V \in {\cal S} \cap {\cal B}_0} { \frac{\nu(V)}{m_G(C+V)}  }}.
$$
In particular, if $\Lambda \subset G$ is a discrete set and $\gamma_{\Lambda} := \sum_{\lambda \in \Lambda} \delta_{\lambda}$ is the counting measure of $\Lambda$, then
\begin{equation} \label{dens-discr-def}
\overline{D}^{\#}(\Lambda) := \overline{D}(\gamma_\Lambda,m_G)
=  \inf_{C \Subset G} { \sup_{V \in {\cal B}_0} { \frac{ \#(\Lambda \cap V) }{m_G(C+V)}  }}.
\end{equation}
\end{definition}

For the motivation for this definition and properties of the a.u.u.d., see \cite{UAUDens}. In particular, if $G = \R^d$ and the Haar measure on $\R^d$ is normalized to be equal to the volume $|\cdot|$, then---as is stated in \cite{Revesz-2011} as Proposition 1 
and fully proved in \cite{UAUDens} as Theorem 1 and in \cite{dissertation} as Theorem 3.1---we have the following. For every convex body $K \subset \R^d$ with unit volume $|K|=1$ the a.u.u.d. $\overline{D}(\nu,m_G)$ of the measure $\nu$ on $\R^d$ coincides with the classical notion of asymptotic uniform upper density of the measure $\nu$ with respect to $K$ defined as
$$
\overline{D}_K(\nu) :=  \limsup_{r \to \infty} { \frac{\sup_{x \in \R^d}{\nu(rK + x)}}{|rK|} }.
$$
The latter is a natural generalization of the frequently used notion of the asymptotic uniform upper density of a measurable set $A \subset \R^d$ with respect to $K$ defined as
$$
\overline{D}_K(A) :=  \limsup_{r \to \infty} { \frac{\sup_{x \in \R^d}{|A \cap (rK + x)| }}{|rK|} },
$$
on the one hand,  and of the asymptotic uniform upper density of a discrete set
$\Lambda \subset \R^d$ with respect to $K$ defined originally by Kahane as
$$
\overline{D}^{\#}_K(\Lambda) :=  \limsup_{r \to \infty} { \frac{\sup_{x \in \R^d}{\#(\Lambda \cap (rK + x)) }}{|rK|} },
$$
on the other hand.

Recall that the so-called (upper, asymptotic) \emph{center density} $\overline{\delta}_{a}(\Lambda)$ of a ball packing $\{B+\lambda \}_{\lambda \in\Lambda}$ in $\RR^d$ is defined as
$$
\overline{\delta}_a(\Lambda):=\limsup_{r\to \infty} {\frac{\#(\Lambda \cap [-r,r]^d)}{2^dr^d}},
$$
while the (upper, asymptotic) \emph{ball packing density} is defined analogously by considering
$$
\overline{\Delta}_a(\Lambda):=\limsup_{r\to \infty} {\frac{|(B+\Lambda)\cap [-r,r]^d|}{2^dr^d}},
$$
the proportionality of space covered. 
It is easy to see that $\overline{\Delta}_{a}(\Lambda) = \omega_d \overline{\delta}_{a}(\Lambda) =\frac{\pi^{\frac{d}{2}}}{\Gamma\left( \frac{d}{2} + 1 \right)}\overline{\delta}_{a}(\Lambda)$.

Following ideas of Groemer \cite{Groemer}, asymptotic densities in sphere packing were replaced by uniform asymptotic densities, the modification meaning taking first a supremum with respect to translations\footnote{It is easy to see that the limits always exist because the quantities $\sup_x\ldots$ are essentially decreasing as $r\to \infty$.}:
$$
\overline{\delta}(\Lambda):=\lim_{r\to \infty} \sup_{x\in \RR^d} {\frac{\#(\Lambda \cap ([-r,r]^d+x))}{2^dr^d}}, \quad
\overline{\Delta}(\Lambda):=\lim_{r\to \infty} \sup_{x\in \RR^d} {\frac{|(B+\Lambda)\cap ([-r,r]^d+x)|}{2^dr^d}}.
$$

In our above notation $\overline{D}_K(\nu)$, the first one corresponds to putting unit masses at the center points in $\Lambda$, and considering the atomic measure $\sum_{\lambda \in \Lambda} \delta_\lambda$ as $\nu$, while the second one means taking the absolutely continuous measure with density function $\sum_{\lambda\in\Lambda} \chi_{B+\lambda}$; in both, the convex body is the unit cube $[-1/2,1/2]^d$ (with unit volume).

Given that the definition of asymptotic \emph{uniform} upper density involves taking also a supremum, it is clear that $\overline{\delta}_{a}(\Lambda) \le \overline{D}^{\#}_K(\Lambda)$, $K$ being an arbitrary convex body with $|K| = 1$. It is well-known, see e.g. \cite{cohn:packings}, that the maximal density of ball packing is the same with respect to both the asymptotic and the uniform asymptotic densities, because arbitrarily close approximation to the extremal density can be constructed by a periodic packing; for the same reason, upper density can be replaced by simply density in any respective statements.

Some connections between the a.u.u.d. and structural properties such as packing, covering and tiling have been established in \cite{Revesz-2011}. We quote three results from this paper.

\begin{proposition}\label{densitybymH}
{\rm (\cite[Proposition 2]{Revesz-2011})}
Assume that $H \in {\cal B}_0$, $\Lambda \subset G$ and  $H$ packs $G$ in the strict sense with the translation set $\Lambda$, i.e., $(H - H) \cap (\Lambda - \Lambda) = \{0\}$.  Then
$$
\overline{D}^{\#}(\Lambda) \le \frac{1}{m_G(H)}.
$$

\end{proposition}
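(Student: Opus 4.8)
The plan is to prove the bound by exhibiting a single favourable compact ``test set'' $C$ in the definition \eqref{dens-discr-def} of $\overline{D}^{\#}(\Lambda)$ and then estimating the numerator $\#(\Lambda \cap V)$ by a disjointness-and-volume argument. Concretely, I would take $C := \overline{H}$, which is a legitimate choice since $H \in {\cal B}_0$ forces $\overline{H} \Subset G$; then it suffices to show that $\#(\Lambda \cap V) \le m_G(C+V)/m_G(H)$ for every $V \in {\cal B}_0$.

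First I would dispose of the degenerate case $m_G(H) = 0$, where the right-hand side is to be read as $+\infty$ and there is nothing to prove; so assume $m_G(H) > 0$. Fix any $V \in {\cal B}_0$. The strict packing hypothesis $(H-H)\cap(\Lambda-\Lambda) = \{0\}$ says precisely that the translates $H + \lambda$, $\lambda \in \Lambda$, are pairwise disjoint; moreover, for every $\lambda \in \Lambda \cap V$ we have $H + \lambda \subseteq \overline{H} + V = C + V$. Hence the pairwise disjoint family $\{\, H + \lambda : \lambda \in \Lambda \cap V \,\}$ lies inside $C + V$, so by monotonicity and additivity of $m_G$,
$$
m_G(C + V) \;\ge\; m_G\!\Big( \bigcup_{\lambda \in \Lambda \cap V}(H + \lambda) \Big) \;=\; \#(\Lambda \cap V)\; m_G(H) .
$$
Since $C + \overline{V}$ is compact and hence of finite Haar measure, while $m_G(H) > 0$, this estimate also shows $\#(\Lambda \cap V) < \infty$, so the union above is legitimately a finite disjoint union. (Alternatively, discreteness of $\Lambda$ follows from Lemma \ref{Lambda-discrete-lemma}, applied to an open $0$-neighbourhood $W \subseteq H-H$, which exists by \cite[Corollary 20.17]{HewittRossI}.) Dividing, $\#(\Lambda \cap V)/m_G(C+V) \le 1/m_G(H)$ for every $V \in {\cal B}_0$; taking the supremum over $V$ and then the infimum over compact sets --- for which the single choice $C = \overline{H}$ is already enough --- yields $\overline{D}^{\#}(\Lambda) \le 1/m_G(H)$.

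The argument is short, and the only point requiring a moment's care --- the closest thing to an obstacle --- is the measurability of the sumset $C + V$: for $C$ compact and $V$ Borel this need not be a Borel set, but it always contains the Borel set $\bigcup_{\lambda \in \Lambda \cap V}(H + \lambda)$, which is all the displayed inequality actually uses; so whether $m_G(C+V)$ in \eqref{dens-discr-def} is understood as the Haar measure of the (analytic, hence universally measurable) set $C + V$ or as the outer measure $m_G^{*}(C+V)$, the estimate goes through unchanged.
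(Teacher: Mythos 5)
Your proof is correct: taking the single test set $C=\overline{H}$ in \eqref{dens-discr-def}, observing that $(H-H)\cap(\Lambda-\Lambda)=\{0\}$ forces the translates $H+\lambda$, $\lambda\in\Lambda\cap V$, to be pairwise disjoint subsets of $C+V$, and comparing Haar measures yields the bound, while your treatment of the two side issues is sound --- finiteness of $\#(\Lambda\cap V)$ is obtained cleanly by running the volume count over finite subsets of $\Lambda\cap V$ first (or, as you note, via Lemma \ref{Lambda-discrete-lemma} together with the Steinhaus-type fact that $H-H$ is a $0$-neighbourhood when $m_G(H)>0$), and the measurability of $C+V$ is immaterial because the estimate only uses the Borel union $\bigcup_{\lambda\in\Lambda\cap V}(H+\lambda)$ and monotonicity of (outer) Haar measure. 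Note that this paper states Proposition \ref{densitybymH} as a quotation from \cite[Proposition 2]{Revesz-2011} and gives no proof of it, so there is no in-paper argument to compare against; your disjoint-translates counting argument is the standard one and is essentially the proof given in that cited source.
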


\begin{proposition}
{\rm (\cite[Proposition 3]{Revesz-2011})}
Assume that $H \in {\cal B}_0$, $\Lambda \subset G$ and $H$ covers $G$ with the translation set $\Lambda$. Then
$$
\overline{D}^{\#}(\Lambda) \ge \frac{1}{m_G(H)}.
$$

\end{proposition}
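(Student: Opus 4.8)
The plan is to fix an arbitrary compact set $C\Subset G$ and establish the single inequality
$$
\sup_{V\in\mathcal B_0}\ \frac{\#(\Lambda\cap V)}{m_G(C+V)}\ \ge\ \frac1{m_G(H)};
$$
taking the infimum over $C$ then yields $\overline{D}^{\#}(\Lambda)\ge 1/m_G(H)$. Two harmless reductions come first. Since enlarging $C$ only shrinks each ratio, I may assume $C$ is a compact neighbourhood of $0$; and, replacing $H$ by a suitable translate (which changes neither $m_G(H)$, nor $\Lambda$, nor the covering property, since a translation of $G$ is a measure preserving bijection), I may assume $0\in H$. Set $K:=\overline H$ and $C_1:=C-K$, both compact, with $0\in C_1$ and $0\in C+C_1$. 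If $\Lambda$ is not locally finite the left-hand side above is $+\infty$ and there is nothing to prove, so I assume $\Lambda$ discrete; I also assume $m_G(H)>0$, deferring the trivial case $m_G(H)=0$ to the end.

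The engine of the proof is the a.e.\ covering identity $\sum_{\lambda\in\Lambda}\chi_H(\cdot-\lambda)\ge 1$, integrated over $C+V$ for an arbitrary $V\in\mathcal B_0$ (so that $\overline{C+V}$ is compact and $m_G(C+V)<\infty$):
$$
m_G(C+V)\ \le\ \int_{C+V}\sum_{\lambda\in\Lambda}\chi_H(x-\lambda)\,dm_G(x)\ =\ \sum_{\lambda\in\Lambda}m_G\bigl((C+V)\cap(H+\lambda)\bigr),
$$
the interchange of sum and integral being Tonelli's theorem (only finitely many $\lambda$ contribute, as $\Lambda$ is discrete and $C+V$ has compact closure). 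A summand vanishes unless $\lambda\in(C+V)-H\subseteq C+V-K=V+C_1$, and each summand is at most $m_G(H)$; hence $m_G(C+V)\le\#\bigl(\Lambda\cap(V+C_1)\bigr)\cdot m_G(H)$. Now I would choose $V$ to be a F\o lner set for the compact neighbourhood $C+C_1$ of $0$: since every LCA group is amenable, for each $\varepsilon>0$ there is $V\in\mathcal B_0$ with $m_G(V)>0$ and $m_G\bigl((C+C_1)+V\bigr)\le(1+\varepsilon)\,m_G(V)$. Put $V':=V+C_1\in\mathcal B_0$. Then the previous display reads $\#(\Lambda\cap V')\,m_G(H)\ge m_G(C+V)$, while $m_G(C+V')=m_G\bigl((C+C_1)+V\bigr)\le(1+\varepsilon)m_G(V)\le(1+\varepsilon)m_G(C+V)$; dividing,
$$
\frac{\#(\Lambda\cap V')}{m_G(C+V')}\ \ge\ \frac{m_G(C+V)/m_G(H)}{(1+\varepsilon)\,m_G(C+V)}\ =\ \frac1{(1+\varepsilon)\,m_G(H)}.
$$
Letting $\varepsilon\to 0$ proves the displayed inequality for our $C$, and since $C$ was arbitrary, $\overline{D}^{\#}(\Lambda)\ge 1/m_G(H)$. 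If instead $m_G(H)=0$, the estimate $m_G(C+V)\le\#(\Lambda\cap(V+C_1))\,m_G(H)$ forces $\#(\Lambda\cap(V+C_1))=\infty$, contradicting discreteness; so that case cannot occur unless the hypothesis is vacuous, and in any event $\overline{D}^{\#}(\Lambda)=+\infty$ then.

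The one step I expect to need genuine care is the invocation of a F\o lner (``summing'') net for $G$ adapted to an arbitrary compact set, i.e.\ the amenability of LCA groups, or equivalently the a.u.u.d.\ machinery of \cite{UAUDens, Revesz-2011}. This is exactly what absorbs the otherwise fatal mismatch between the ``fattened'' set $V+C_1$ appearing in the numerator and the set $V$ appearing (through $C+V$) in the denominator; every purely elementary rearrangement of the covering identity hits the same obstruction. Everything else — Tonelli, the set identity $(C+V)-H\subseteq V+(C-K)$, and monotonicity of the defining $\inf_C\sup_V$ in $C$ — is routine bookkeeping.
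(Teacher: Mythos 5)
Your proof is correct, but note that the paper itself contains no proof of this proposition: it is quoted from \cite[Proposition 3]{Revesz-2011}, so what you have written is a self-contained substitute for an imported result rather than a variant of an argument in the text. Your route --- integrate the a.e.\ covering inequality over $C+V$, observe that only $\lambda\in(C+V)-H\subseteq V+C_1$ (with $C_1=C-\overline H$) can contribute and each contributes at most $m_G(H)$, and then absorb the mismatch between $V+C_1$ in the numerator and $C+V$ in the denominator by choosing $V$ almost invariant under the compact set $C+C_1$ --- is sound, and the key input you invoke is legitimate and non-circular: every LCA group is amenable, and amenable locally compact groups satisfy the F\o lner condition for compact sets (Emerson--Greenleaf), which is independent of the a.u.u.d.\ machinery of \cite{UAUDens, Revesz-2011}. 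Two small touch-ups. First, take the F\o lner set $V$ \emph{compact} (the F\o lner condition supplies compact sets): then $V':=V+C_1$ is compact, hence trivially in ${\cal B}_0$, whereas for a general Borel $V$ the set $V+C_1$ has compact closure but its Borel measurability is not obvious. Second, in the case $m_G(H)=0$ the cleaner observation is that every summand $m_G\bigl((C+V)\cap(H+\lambda)\bigr)$ is then zero, so your display gives $m_G(C+V)=0$, a contradiction; thus a locally finite $\Lambda$ is incompatible with the covering hypothesis when $m_G(H)=0$, and the only surviving scenario is the non-locally-finite one, where $\overline{D}^{\#}(\Lambda)=+\infty$ anyway --- this is what you conclude, just phrased more directly than via ``$\#(\Lambda\cap(V+C_1))=\infty$''. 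Since the original proof in \cite{Revesz-2011} is not reproduced here, I will not claim your argument coincides with or differs from it in detail; what your version buys is a short, transparent derivation of the covering bound from classical amenability, at the price of importing the F\o lner theorem as a black box.
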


\begin{corollary} \label{density-corollary}
{\rm (\cite[Corollary 3]{Revesz-2011})}
Assume that $H \in {\cal B}_0$, $\Lambda \subset G$ and $H$ tiles $G$ in the strict sense with the translation set $\Lambda$. Then
$$
\overline{D}^{\#}(\Lambda) = \frac{1}{m_G(H)}.
$$

\end{corollary}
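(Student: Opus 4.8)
The plan is to read the corollary off from the two preceding propositions, since ``$H$ tiles $G$ in the strict sense with $\Lambda$'' means exactly that $H$ both packs $G$ in the strict sense with $\Lambda$ (equivalently $(H-H)\cap(\Lambda-\Lambda)=\{0\}$) and covers $G$ with $\Lambda$, i.e.\ $\sum_{\lambda\in\Lambda}\chi_H(x-\lambda)\ge 1$ for a.e.\ $x\in G$. These are precisely the hypotheses of Proposition~\ref{densitybymH} and of the covering proposition (Proposition~3 of \cite{Revesz-2011}) stated just above it, respectively.

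First I would invoke Proposition~\ref{densitybymH} for the strict packing, obtaining $\overline{D}^{\#}(\Lambda)\le 1/m_G(H)$. Then I would invoke the covering proposition for the covering, obtaining the reverse inequality $\overline{D}^{\#}(\Lambda)\ge 1/m_G(H)$. The two together force equality, which is the assertion of the corollary.

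If one prefers a self-contained argument for the easy half, the upper bound can be seen directly from Definition~\ref{def-auud}: take the compact set $C:=\overline{H}$ (compact because $H\in{\cal B}_0$); for any $V\in{\cal B}_0$ the finitely many translates $H+\lambda$ with $\lambda\in\Lambda\cap V$ are pairwise disjoint by the strict packing condition and all lie in $\overline{H}+V=C+V$, so by translation invariance of $m_G$ we get $\#(\Lambda\cap V)\,m_G(H)\le m_G(C+V)$; hence $\sup_{V}\frac{\#(\Lambda\cap V)}{m_G(C+V)}\le \frac{1}{m_G(H)}$, and a fortiori $\overline{D}^{\#}(\Lambda)\le \frac{1}{m_G(H)}$. (Finiteness of $\#(\Lambda\cap V)$ here uses discreteness of $\Lambda$, Lemma~\ref{Lambda-discrete-lemma}.) The matching lower bound is the genuine content of the covering proposition, and I would simply cite it rather than reprove it.

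The argument has essentially no obstacle; the only mildly delicate point, namely the bookkeeping in the covering direction — controlling the discrepancy between $\#(\Lambda\cap V)$ and $\#(\Lambda\cap((C-\overline{H})+V))$ as $V$ exhausts $G$ — is already handled in \cite{Revesz-2011}, so at this stage the corollary is a two-line deduction.
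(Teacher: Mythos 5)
Your deduction is correct and is exactly how this corollary is obtained: the paper simply cites it from \cite{Revesz-2011}, where it follows by combining the strict-packing upper bound (Proposition~\ref{densitybymH}) with the covering lower bound, precisely as you do. Your optional direct argument for the upper bound via the choice $C=\overline{H}$ in Definition~\ref{def-auud} is also sound, but it adds nothing essential beyond the two-line deduction.
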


We will also need the following simple result.

\begin{proposition} \label{finite-dens-proposition}
If $G$ is compact with the Haar measure normalized such that $m_G(G) = 1$, and if $\Lambda$ is finite, then $\overline{D}^{\#}(\Lambda) = {\#}\Lambda$.

\end{proposition}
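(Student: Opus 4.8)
The plan is to evaluate $\overline{D}^{\#}(\Lambda)=\inf_{C\Subset G}\sup_{V\in{\cal B}_0}\frac{\#(\Lambda\cap V)}{m_G(C+V)}$ directly from Definition~\ref{def-auud}, using two features of a compact group $G$. Since $G$ is compact, $G\Subset G$, so $G$ is an admissible choice for the ``outer'' set $C$, and likewise $G\in{\cal B}_0$, so $G$ is admissible for the ``inner'' set $V$. Moreover, as $G$ is a group, $G+A=G$ for every nonempty $A\subset G$, whence $m_G(C+V)\le m_G(G)=1$ for all $C$ and $V$, with equality whenever $G\subset C+V$. These two observations make both inequalities nearly immediate.

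For the lower bound I would fix an arbitrary compact $C\Subset G$ and take $V:=G$ in the inner supremum. Then $\#(\Lambda\cap G)=\#\Lambda$ and $m_G(C+G)=m_G(G)=1$, so $\sup_{V\in{\cal B}_0}\frac{\#(\Lambda\cap V)}{m_G(C+V)}\ge\#\Lambda$; since $C$ was arbitrary, $\overline{D}^{\#}(\Lambda)\ge\#\Lambda$. For the upper bound I would instead take $C:=G$ in the outer infimum. For every nonempty $V\in{\cal B}_0$ we then have $C+V=G$, hence $m_G(C+V)=1$, while trivially $\#(\Lambda\cap V)\le\#\Lambda$; consequently $\sup_{V\in{\cal B}_0}\frac{\#(\Lambda\cap V)}{m_G(C+V)}\le\#\Lambda$, and therefore $\overline{D}^{\#}(\Lambda)\le\#\Lambda$. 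Combining the two inequalities gives $\overline{D}^{\#}(\Lambda)=\#\Lambda$.

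This is an entirely routine verification and I do not expect any genuine obstacle; the only points meriting a word are the degenerate value $V=\emptyset$ (which we simply exclude, or treat via the convention $0/0=0$; it plays no role in either supremum) and the remark that compactness of $G$ is precisely what makes $G$ itself an eligible test set simultaneously for $C$ and for $V$, which is exactly what drives the argument. The result is of course consistent with Corollary~\ref{density-corollary}: if a finite $\Lambda$ happens to tile $G$ in the strict sense with some $H\in{\cal B}_0$, then $m_G(H)=1/\#\Lambda$; but the proposition requires no such tiling hypothesis.
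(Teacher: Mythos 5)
Your proof is correct and is essentially identical to the paper's: the paper likewise takes $C=G$ in the outer infimum to get $\overline{D}^{\#}(\Lambda)\le\#\Lambda$ and $V=G$ in the inner supremum to get $\overline{D}^{\#}(\Lambda)\ge\#\Lambda$, both times using $m_G(G)=1$. Your extra remark about the degenerate choice $V=\emptyset$ is a harmless refinement the paper passes over silently.
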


\begin{proof}
Taking $C = G$ in the definition of a.u.u.d (\ref{dens-discr-def}), we obtain
$$
\overline{D}^{\#}(\Lambda) \le  \sup_{V \in {\cal B}_0} { \frac{ \#(\Lambda \cap V) }{m_G(G)}  }
= \sup_{V \in {\cal B}_0} { \#(\Lambda \cap V) } = \# \Lambda.
$$
On the other hand, taking $V = G$ in (\ref{dens-discr-def}), we get
$$
\overline{D}^{\#}(\Lambda) \ge  \inf_{C \Subset G} { \frac{ \#(\Lambda \cap G) }{m_G(C+G)}}
= \frac{ \#(\Lambda \cap G) }{m_G(G)} = \# \Lambda.
$$
 \eop \end{proof}

Finally, let us note a simple consequence of the above Proposition \ref{densitybymH}.

\begin{proposition}\label{densityfinite} If the open set $0\in\Omega \subset G$ satisfies a strict packing type condition with the translational set $\Lambda$, then $\La$ is discrete, moreover, the a.u.u.d. of its counting measure is finite: $\overline{D}^{\#}(\Lambda) <\infty$.
\end{proposition}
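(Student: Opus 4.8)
The plan is to dispatch the two assertions separately, each by reduction to a result already in place. The discreteness of $\Lambda$ requires nothing new: the hypothesis says exactly that the open neighborhood $\Omega \ni 0$ satisfies a strict packing type condition with $\Lambda$, so Lemma \ref{Lambda-discrete-lemma} applies literally and gives that $\Lambda$ is discrete.

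For the finiteness of $\overline{D}^{\#}(\Lambda)$ the idea is to exhibit an honest strict packing set sitting inside $\Omega$ and then quote Proposition \ref{densitybymH}. One may assume $\Lambda \neq \emptyset$, since otherwise $\overline{D}^{\#}(\Lambda) = 0$. As $0 \in \intt \Omega$ and subtraction is jointly continuous on $G$, I would choose --- just as in the proof of Lemma \ref{Lambda-discrete-lemma} --- an open neighborhood $V$ of $0$ with compact closure such that $V - V \subset \Omega$. Then $V \in \mathcal{B}_0$, $V$ is a nonempty open set so $m_G(V) > 0$, and combining $V - V \subset \Omega$ with the hypothesis $\Omega \cap (\Lambda - \Lambda) = \{0\}$ yields $(V - V) \cap (\Lambda - \Lambda) \subseteq \{0\}$; since $0$ belongs to both difference sets this is an equality, i.e. $V$ packs $G$ in the strict sense \eqref{stricpacking} with translation set $\Lambda$.

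Applying Proposition \ref{densitybymH} with $H := V$ then gives $\overline{D}^{\#}(\Lambda) \le 1/m_G(V) < \infty$, as desired. There is no serious obstacle here; the one spot needing a line of care is the existence of an open $V$ with compact closure and $V - V \subset \Omega$, but this is a routine consequence of local compactness together with the joint continuity of the group operation, and is in fact already carried out within the proof of Lemma \ref{Lambda-discrete-lemma}.
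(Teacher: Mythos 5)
Your proof is correct and follows essentially the same route as the paper: discreteness is quoted from Lemma \ref{Lambda-discrete-lemma}, and finiteness comes from choosing an open neighborhood $V$ of $0$ with $V-V\subset\Omega$ (so that $V$ strictly packs with $\Lambda$) and applying Proposition \ref{densitybymH} to get $\overline{D}^{\#}(\Lambda)\le 1/m_G(V)<\infty$. The extra care you take about compact closure of $V$ and the empty-$\Lambda$ case is fine and only makes the argument slightly more explicit than the paper's.
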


\begin{proof} Discreteness was given in Lemma \ref{Lambda-discrete-lemma}. As it is done there, we pick an arbitrary open neighborhood $W$ of $0$ satisfying $(W-W) \cap (\La-\La)=\{0\}$. Then Proposition \ref{densitybymH} applies and we find $\overline{D}^{\#}(\Lambda) \le 1/m_G(W) <\infty$ (for $m_G(W)>0$ in view of openness).
 \eop \end{proof}


\section{The strict packing type condition and the Delsarte extremal problem}\label{sec:Delsarte}


%

In this section we will prove Theorem~\ref{main-theorem}.

The statement corresponding to Theorem~\ref{main-theorem} for the Tur\'an extremal problem has been obtained in \cite{KR-2006} in the cases when $G$ is a compact Abelian group or $G$ is one of the groups $\R^d$ and $\Z^d$, and in  \cite{Revesz-2011} in the general case of a LCA group $G$.

\bigskip

\noindent {\bf Proof of Theorem~\ref{main-theorem}.}
1) First we consider the case when $G$ is a compact Abelian group with the Haar measure normalized such that $m_G(G) = 1$. Due to Proposition~\ref{finite-dens-proposition}, it is enough to show that $\int_G f \le \frac{1}{\#\Lambda}$ for each function $f \in \FC(\Omega_+,G)$. This can be done by almost verbatim repetition of the proof of Theorem 2 in \cite{KR-2006}. We give the details for the sake of completeness.

Since $G$ is compact and $\Lambda$ is discrete by Lemma~\ref{Lambda-discrete-lemma}, $\Lambda$ is finite. Take an arbitrary $f \in \FC(\Omega_+,G)$. Consider the function
$$
\Phi(x) := \sum_{\lambda \in \Lambda} \sum_{\lambda' \in \Lambda} f(x + \lambda - \lambda'), \qquad x \in G.
$$
For this derived function $\Phi = f * \delta_{\Lambda} * \widetilde{\delta}_{\Lambda}$, it is easy to see that $\Phi \gg 0$, see e.g. \cite[(32.8) (d)]{HewittRossII}. Further,
\begin{equation}\label{eq:integralPhi}
\int_G \Phi = (\#\Lambda)^2 \int_G f,
\end{equation}
and, since  $(\Lambda - \Lambda) \cap \Omega_+ = \{0\}$ and $f(0)=1$,
$$
\Phi(0) = \#\Lambda \cdot f(0) + \sum_{\lambda \in \Lambda} \sum_{{\lambda' \in \Lambda}\atop{\lambda' \ne \lambda}} f(\lambda - \lambda') \le \#\Lambda.
$$
It follows from the positive definiteness of $\Phi$ that $\Phi(x) \le \Phi(0)$, $x \in G$, and thus
$$
\int_G \Phi \le \Phi(0) m_G(G) \le \#\Lambda \cdot m_G(G) = \#\Lambda,
$$
which yields the desired estimate, when compared to \eqref{eq:integralPhi}.

2) Now we consider the less trivial case when $G$ is not compact. Our proof uses ideas from  the proof of Theorem 7 in \cite{Revesz-2011}. Assume, as we may, $\overline{D}^{\#}(\Lambda) >0$ (as otherwise there is nothing to prove).

Fix $\al > 0$ satisfying $\al< {\cal D}(\Omega_+)$. There is a function $f \in \FC(\Omega_+,G)$ such that
$$
\int_G f > \al.
$$
Take $K$ to be a compact neighborhood of $0$ such that $\supp{f_+} \subset \intt K \subset K = \overline{K}$. Let $G_0 =  \langle K \rangle$, and let the lattice $L$ (isomorphic to $\Z^d$, $d \ge 1$) and the set $E \in {\cal B}_0$ be as described in Lemma~\ref{structure-lemma}. For $n = (n_1, \ldots, n_d) \in \Z^d$, define $\|n\| := \max_{j=1,\ldots,d}{|n_j|}$. Let $V_0 := E$ and for $N \in \N$
$$
L_N := \{ \ell \in L : \|\ell\| \le N \}
\qquad \text{and} \qquad
V_N := E + L_N \subset G_0.
$$
Clearly, $V_N$ tiles $G_0$ in the strict sense with the lattice $(2N + 1)L$.

We now establish that there is an $s \in \N$ such that $E + E \subset V_s = E + L_s$.

Indeed, if $(E + E) \cap (E + \ell) \ne \emptyset$, then  for an element $x \in (E + E) \cap (E + \ell)$ we have $x = e_1 + e_2 = e_3 + \ell$ with $e_1, e_2, e_3 \in E$.  Thus, $\ell = e_1 + e_2 - e_3 \in E + E - E$. Since $E + E - E$ has a compact closure and $L$ is discrete, the set $(E + E - E) \cap L$ is finite and thus  $(E + E - E) \cap L \subset L_s$ with some $s \in \N$. Then also $E + E \subset G = E + L=\cup_{\ell \in L} (E+\ell)$ and thus $E + E \subset \cup_{\ell \in L} ((E + E)\cap (E+\ell))$ implies that for any $\ell$ occurring with a nonempty set here on the right hand side we must have $\ell \in L_s$, whence also $E + E \subset E + L_s$.

We will need the following statement from  \cite{Revesz-2011}.

\begin{lemma} \label{density-lemma}
{\rm (\cite[Lemma 2]{Revesz-2011})}
Let $G$ be a LCA group, and let $\nu$ be a Borel measure on $G$ with $\overline{D}(\nu,m_G) =: \rho > 0$. For each $V \in {\cal B}_0$ and for each $\varepsilon > 0$, there exists $z \in G$ such that
$$
\nu(V + z) \ge (\rho - \varepsilon) \, m_G(V).
$$

\end{lemma}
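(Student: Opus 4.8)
The plan is to prove this by a single averaging inequality over translates of $V$, in the spirit of classical density arguments. Fix $V\in{\cal B}_0$ and $\varepsilon>0$. We may assume $m_G(V)>0$ and $0<\varepsilon<\rho$ (otherwise the right-hand side is $\le 0$ and $z=0$ works), and we may also assume $0<\rho<\infty$: this is harmless in our applications, since $\overline{D}^{\#}(\Lambda)<\infty$ by Proposition~\ref{densityfinite}, and moreover $\rho<\infty$ forces $\nu$ to be \emph{locally finite}, i.e. $\nu(K)<\infty$ for every compact $K$ (if some $K\Subset G$ had $\nu(K)=\infty$, then for any compact $C$ in the infimum defining $\overline{D}(\nu,m_G)$ we would already have $\sup_W \nu(W)/m_G(C+W)=\infty$). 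Finally we may assume $S:=\sup_{z\in G}\nu(V+z)<\infty$, for otherwise there is nothing to prove. Set $C:=-\overline{V}$; since $\overline V$ is compact and inversion is a homeomorphism, $C$ is a compact subset of $G$, hence an admissible choice in the infimum defining $\overline{D}(\nu,m_G)$.

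The heart of the matter is the identity, valid for every $\nu$-measurable relatively compact set $W$,
\begin{equation*}
\int_G \nu\big((V+z)\cap W\big)\,dm_G(z)=m_G(V)\,\nu(W).
\end{equation*}
To obtain it I would write $\nu((V+z)\cap W)=\int_G \chi_W(x)\chi_V(x-z)\,d\nu(x)$ and apply Tonelli's theorem; this is legitimate because the integrand on $G\times G$ is supported on $\overline W\times(\overline W-\overline V)$, a product of compact sets on which both $m_G$ and (by local finiteness) $\nu$ are finite. Thus the left-hand side equals $\int_{\overline W}\big(\int_G\chi_V(x-z)\,dm_G(z)\big)\,d\nu(x)$, and the inner integral equals $m_G(V)$ for every $x$, because $z\mapsto x-z$ is a composition of a translation and inversion, each of which preserves Haar measure on an abelian group (for inversion one notes $E\mapsto m_G(-E)$ is again a Haar measure and so equals $m_G$, as they agree on any symmetric set of finite positive measure).

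Next I would exploit that the integrand in the identity vanishes off $C+W$: if $\nu((V+z)\cap W)>0$ then $(V+z)\cap W\neq\emptyset$, so $z\in W-V\subseteq W-\overline V=C+W$, which is a genuinely Borel, relatively compact set. Combining this with the pointwise bounds $\nu((V+z)\cap W)\le\nu(V+z)\le S$ gives
\begin{equation*}
m_G(V)\,\nu(W)=\int_{C+W}\nu\big((V+z)\cap W\big)\,dm_G(z)\le S\,m_G(C+W).
\end{equation*}
If $m_G(C+W)=0$ the same identity forces $\nu(W)=0$, so in every case $\nu(W)\le \tfrac{S}{m_G(V)}\,m_G(C+W)$. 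Taking the supremum over all $\nu$-measurable relatively compact $W$ and then invoking the definition of $\overline{D}(\nu,m_G)$ as an infimum over compacts, which is in particular $\le$ the supremum attached to our $C$, yields $\rho\le S/m_G(V)$, i.e. $S\ge \rho\,m_G(V)>(\rho-\varepsilon)\,m_G(V)$. Since $S$ is a supremum, there exists $z\in G$ with $\nu(V+z)>(\rho-\varepsilon)\,m_G(V)$, which is the assertion (it even gives the slightly stronger $\sup_z\nu(V+z)\ge\rho\,m_G(V)$).

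The conceptual content is thus a one-line averaging argument, and I expect the only real obstacles to be measure-theoretic bookkeeping: justifying Tonelli's theorem in a possibly non-$\sigma$-finite, non-second-countable group (handled by localizing to the relatively compact ``support box'' of the integrand and using that $\rho<\infty$ makes $\nu$ locally finite), and avoiding the potentially non-measurable difference set $W-V$ (circumvented by enlarging it to the relatively compact Borel set $C+W=W-\overline V$). The small reductions to $\rho<\infty$ and to $S<\infty$ should be flagged but cause no difficulty for the way the lemma is later used.
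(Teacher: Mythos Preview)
The paper does not give its own proof of this lemma: it is quoted verbatim as \cite[Lemma 2]{Revesz-2011} and used as a black box in the proof of Theorem~\ref{main-theorem}. So there is no in-paper argument to compare against.

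Your averaging argument is correct and is in fact the natural proof. Two small remarks. First, your claim that $C+W=W-\overline V$ is ``genuinely Borel'' is slightly off: a sum of a compact set and a Borel set is in general only analytic, hence universally measurable, not necessarily Borel. This does not affect the argument---$m_G(C+W)$ is still well-defined---but if you want to sidestep the issue entirely you may integrate over the compact set $\overline W - \overline V\supseteq W-V$ instead, obtaining $m_G(V)\,\nu(W)\le S\cdot m_G(\overline W-\overline V)$; then take the supremum over \emph{compact} $W$ only (for which $C+W$ is compact), which already dominates $\rho$ since $\nu(W)\le\nu(\overline W)$ and $m_G(C+\overline W)$ is the same denominator. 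Second, your reduction to $\rho<\infty$ is, as you note, precisely what is needed here (Proposition~\ref{densityfinite}); in fact your argument also handles $\rho=\infty$ once one reads the conclusion as ``for every $\alpha<\rho$ there is $z$ with $\nu(V+z)\ge\alpha\,m_G(V)$'', since $S<\infty$ would force $\rho\le S/m_G(V)<\infty$.
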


We apply this lemma to the counting measure of the translation set $\Lambda$ with $\rho :=  \overline{D}^{\#}(\Lambda)  > 0$, and with an arbitrary $0<\ve<\rho$.

Consider the set $V_N$ with a large $N \in \N$. The set $V_N$ has compact closure, hence, $\#(\Lambda \cap (V_N + z)) < \infty$ for each $z \in G$. Take $z \in G$ like in Lemma~\ref{density-lemma}, i.e.,
\begin{equation}\label{M-below}
M := \#(\Lambda \cap (V_N + z))  \ge (\rho - \varepsilon) \, m_{G}(V_N) =  (\rho - \varepsilon) \, m_{G_0}(V_N)
\end{equation}
with fixing the Haar measure of $G_0$ as the restriction of $m_G$ to $G_0$.
Let $\Lambda' := \Lambda \cap (V_N + z) = \{ \lambda_1, \ldots, \lambda_M \}$. Define
$$
\Phi(x) := \sum_{\lambda \in \Lambda'} \sum_{\lambda' \in \Lambda'} f(x + \lambda - \lambda'), \qquad x \in G.
$$
Since the sum consists of finitely many summands, $\Phi$ is well-defined and continuous, moreover,
$\Phi \gg 0$ as above. Further on, $\supp{\Phi} \Subset G$ since it is closed and $\supp{\Phi} \subset \supp{f} + \Lambda' - \Lambda'$ which is a compact set. Thus also $\supp{\Phi_+} \Subset G$. For the latter we have $\supp{\Phi_+}  \subset \supp{f_+} + \Lambda' - \Lambda' \subset  \supp{f_+} + (V_N + z) - (V_N + z) \subset E + V_N - V_N \subset V_{N + s} - V_{N + s}$.  In particular, $\supp{\Phi_+}  \subset G_0$. Thus, $\Phi_0 := \frac{1}{\Phi(0)} \, \Phi|_{G_0} \in {\cal F}_{c,G_0}(V_{N + s} - V_{N + s}, G_0)$.  Since $V_{N + s}$ tiles $G_0$ in the strict sense with the translation set $(2N + 2s +1) \, L$ which is a finitely generated lattice, we can apply Proposition~\ref{tile-theorem} and obtain
\begin{equation}\label{eq:intPhizero}
\int_{G_0} \Phi_0 = \frac{1}{\Phi(0)} \, \int_{G_0} \Phi \le {\cal D}_{G_0}(V_{N + s} - V_{N + s}) = m_{G_0}(V_{N + s}).
\end{equation}

On the other hand, since $\supp{f_+} \subset G_0$,
\begin{equation} \label{phi-est-1}
\int_{G_0} \Phi = M^2 \int_{G_0} f \ge M^2 \int_{G} f > M^2 \, \al,
\end{equation}
and, since $(\Lambda - \Lambda) \cap \Omega_+ = \{0\}$,
\begin{equation} \label{phi-est-2}
\Phi(0) = M \, f(0) + \sum_{\lambda \in \Lambda'} \sum_{{\lambda' \in \Lambda'}\atop{\lambda' \ne \lambda}} f(\lambda - \lambda') \le M.
\end{equation}
Summarizing, we obtain from \eqref{eq:intPhizero}, (\ref{phi-est-1}), (\ref{phi-est-2}) and (\ref{M-below})
$$
m_{G_0}(V_{N + s}) \ge \frac{1}{\Phi(0)} \, \int_{G_0} \Phi \ge M \,  \al \ge (\rho - \varepsilon) \, m_{G_0}(V_N) \, \al
$$
so that
$$
\al \le \frac{1}{\rho - \varepsilon} \, \frac{m_{G_0}(V_{N + s})}{m_{G_0}(V_N)}.
$$
Finally, since $E$ is a tile, $m_{G_0}(V_N) = m_{G_0}(E + L_N) = (2N + 1)^d \, m_{G_0}(E)$, and
$$
\al \le \frac{1}{\rho - \varepsilon} \, \frac{(2N + 2s + 1)^d }{ (2N + 1)^d } .
$$
Taking limit when $N \to \infty$, we obtain
$$
\al \le \frac{1}{\rho - \varepsilon}.
$$
Letting $\ve\to 0$, we obtain $\al \le 1/\rho$ for all $\al< {\cal D}(\Omega_+)$, whence the desired statement follows.
 \eop 

Let $\Lambda$ be the center points of a ball packing with the unit ball $B$ in $\RR^d$. In accordance with~\eqref{stricpacking}, we have
$$
(B - B) \cap (\Lambda - \Lambda) = \{0\}.
$$
Since $B - B = 2B$, the strict packing type condition is fulfilled for $2B$ with the translation set $\Lambda$, and Theorem~\ref{main-theorem} yields
$$
\overline{D}^{\#}(\Lambda) \le \frac{1}{{\cal D}(2B)} =  \frac{1}{2^d {\cal D}(B)} .
$$
We arrive at the following statement.

\begin{corollary}\label{cor:centerdens} Let $\Lambda$ be the center points of a ball packing with the unit ball $B$ in $\RR^d$. Then for the density $\overline{\Delta}(\Lambda)$ of $\Lambda$ we have $\overline{\Delta}(\Lambda) \le \omega_d \overline{D}^{\#}(\Lambda) \le \frac{\omega_d}{2^d {\cal D}(B)}$, while for the center density $\overline{\delta}(\Lambda)$ we have $\overline{\delta}(\Lambda)\le \frac{1}{ 2^d{\cal D}(B)}$.
\end{corollary}

The same estimate holds for $B$ being replaced by any convex $0$-symmetric body in $\R^d$, by exactly the same argument, compare~\cite[Appendix~B]{cohn:packings}.

\begin{remark}
There are situations when the estimate in Theorem~\ref{main-theorem} is exact.
\end{remark}

This is, for example, the case, when  $\Omega_+ = H - H$ with $H \in \mathcal{B}_0$ which tiles $G$ in the strict sense with the translation set $\Lambda$.    In \cite[Corollary 5]{Revesz-2011} it was proved that in this case $\mathcal{T}(\Omega_+) = m_G(H)$ (see also Proposition~\ref{tile-theorem}). On the other hand, Corollary~\ref{density-corollary} gives $\overline{D}^{\#}(\Lambda) = \frac{1}{m_G(H)}$. Combining this with the result of Theorem~\ref{main-theorem} and noting that $\mathcal{T}(\Omega_+) \le \mathcal{D}(\Omega_+)$, we obtain
$$
\mathcal{D}(\Omega_+) = m_G(H) = \frac{1}{\overline{D}^{\#}(\Lambda)}.
$$

One further example, a very important one, is the result of Viazovska for the Euclidean ball $B$ in $\R^8$ \cite{Viaz}. In this situation the Delsarte  constant gives the \textit{exact} upper estimate for the density of any spherical packing,  which estimate is actually attained by the $E_8$ root lattice. Thus, once again, equality occurs in (\ref{main-theorem-estimate}) and in Corollary \ref{cor:centerdens}, too.

\bigskip

\noindent
Elena E. Berdysheva\\
University of Cape Town,\\
South Africa\\
{\tt elena.berdysheva@uct.ac.za}

\bigskip

\noindent
Szil\'ard Gy. R\'ev\'esz\\
Alfr\'ed R\'enyi Institute of Mathematics, \\
Budapest, Hungary\\
{\tt revesz.szilard@renyi.hu}

\end{document}